\numberwithin{equation}{section}
\newtheorem{theorem}{Theorem}
\newtheorem{meta-thm}[theorem]{Meta-Theorem}
\newtheorem{format}[theorem]{Theorem Format}
\newtheorem{lemma}[theorem]{Lemma}
\newtheorem{proposition}[theorem]{Proposition}
\newtheorem{algorithm}[theorem]{Algorithm}
\newtheorem{remark}[theorem]{Remark}
\newtheorem{definition}[theorem]{Definition}
\newcommand{\noaverage}[1]{({#1})^0}
\newcommand\beq[1]{ \begin{equation}\label{#1} }
\newcommand{\eeq}{ \end{equation} }
\newcommand\beqa[1]{ \begin{eqnarray} \label{#1}}
\newcommand{\eeqa}{ \end{eqnarray} }
\newcommand{\beqano}{\begin{eqnarray*} }
\newcommand{\eeqano}{ \end{eqnarray*} }
\newcommand\equ[1]{{\rm (\ref{#1})}}
\def\ep{\varepsilon}
\def\dist{\operatorname{dist}}
\def\Id{\operatorname{Id}}
\def\Int{\operatorname{Int}}
\def\Range{\operatorname{Range}}
\def\A{{\mathcal A}}
\def\C{{\mathcal C}}
\def\D{{\mathcal D}}
\def\E{{\mathcal E}}
\def\M{{\mathcal M}}
\def\complex{{\mathbb C}}
\def\integer{{\mathbb Z}}
\def\nat{{\mathbb N}}
\def\E{{\mathcal E}}
\def\real{{\mathbb R}}
\def\torus{{\mathbb T}}
\def\zed{{\mathbb Z}}
\begin{document}

\title[KAM estimates for the dissipative standard map]
{KAM estimates for the dissipative standard map}

\author[R. Calleja]{Renato C.  Calleja}
\address{Department of Mathematics and Mechanics, IIMAS, National
  Autonomous University of Mexico (UNAM), Apdo. Postal 20-126,
  C.P. 01000, Mexico D.F., Mexico}
\email{calleja@mym.iimas.unam.mx}

\author[A. Celletti]{Alessandra Celletti}
\address{
Department of Mathematics, University of Rome Tor Vergata, Via della Ricerca Scientifica 1,
00133 Rome (Italy)}
\email{celletti@mat.uniroma2.it}

\author[R. de la Llave]{Rafael de la Llave}
\address{
School of Mathematics,
Georgia Institute of Technology,
686 Cherry St., Atlanta GA. 30332-1160 }
\email{rafael.delallave@math.gatech.edu}

\thanks{A.C. was partially supported by GNFM-INdAM, EU-ITN Stardust-R, MIUR-PRIN 20178CJA2B ``New Frontiers
of Celestial Mechanics: theory and Applications'' and acknowledges
the MIUR Excellence Department Project awarded to the Department
of Mathematics, University of Rome Tor Vergata, CUP
E83C18000100006. R.L. was partially supported by NSF grant
DMS-1800241. R.C. was partially supported by UNAM-DGAPA PAPIIT
projects IA102818 \& IN 101020.
Part of this material is based upon work supported by
the National Science Foundation under Grant No. DMS-1440140 while
the authors were in residence at the Mathematical Sciences
Research Institute in Berkeley, California, during the Fall 2018
semester.}

\baselineskip=18pt              


\begin{abstract}
From the beginning of KAM theory, it was realized that its
applicability to realistic problems depended on developing
quantitative estimates on the sizes of the perturbations allowed.
In this paper we present results on the existence of
quasi-periodic solutions  for
conformally symplectic systems in non-perturbative
regimes. We recall that, for conformally symplectic
systems, finding the solution requires also to find
a \emph{drift parameter}.  We present a proof on the existence of
solutions for
values of the parameters which agree with more than three
figures with  the numerically conjectured optimal values.

The first step of the strategy is to establish a very
explicit quantitative theorem in an a-posteriori format.
We recall that in numerical analysis, an a-posteriori theorem
assumes the existence of
an approximate solution, which satisfies an invariance
equation up to an error which is small enough with respect to
explicit condition numbers, and then concludes the existence of
a solution. In the case of conformally symplectic systems,
an a-posteriori theorem was proved in \cite{CallejaCL11}.
Our first task is to make all the constants fully explicit.

We emphasize  that our result allows to conclude the existence of
the true solution by verifying mainly that the approximate solution
satisfies the equation up to a small error and that some
condition numbers are finite. The method used
to produce the approximate solution does not need to be examined.

The second step in the strategy is to produce numerically very
accurate solutions in a concrete problem. We have implemented the
algorithm indicated in \cite{CallejaCL11}  in a model problem,
widely considered in the literature; we constructed numerically
very accurate solutions of the invariance
equations (discretizations with $2^{18}$ Fourier coefficients, each
one computed with 100 digits of precision). From the point of view
of rigorous mathematics, we note that the first step is a fully
rigorous theorem, the second step is a high precision calculation
which produces an impact for the theorem in the first part.

The third and final step is to present
a numerical verification of the hypotheses of the theorem
stated in the first part on the numerical solutions presented in the second part.
Using these estimates we would conclude  the existence of
tori  for  certain values of the drift parameter.
The perturbation parameters we can consider coincide with
more than 3 significant figures  with the values conjectured as
optimal by numerical experiments.

The verification of the estimates presented here
is not completely rigorous since we
do not control the round-off error.
Nevertheless, running with different precision shows very little difference
in the results.
Given the high precision of
the calculation and the simplicity of the estimates, this does
not seem to affect the results. A full verification
should be done implementing interval arithmetic.

We make available the approximate solutions, the highly efficient
algorithms to generate them (incorporating high precision based on the MPFR library)
and the routines used to verify the applicability of the theorem.
\end{abstract}

\subjclass[2010]{70K43, 37J40, 34D35}
\keywords{KAM estimates, dissipative systems, conformally symplectic systems,
standard map, quasi--periodic solutions, attractors}

\maketitle


\section{Introduction}\label{sec:intro}

The goal of this paper is to develop a methodology to
compute efficiently and to verify rigorously the existence of quasi-periodic solutions
in concrete systems
(compare with \cite{MR912758,Celletti90I,Celletti90II,MR2684063,CellettiC95,CellettiC97,CellettiC07,CellettiC09,
FHL,LGS2,LlaveR90,Locatelli,LGS1,StefanelliL15}).

The celebrated KAM theory, started in
\cite{Kolmogorov54,Arnold63a,Moser62},
solved the outstanding problem of establishing the
persistence of quasi-periodic orbits under small perturbations.
An important motivation was represented by problems in celestial mechanics
(\cite{Arnold63b}).
By now, KAM theory has developed into a very useful paradigm.
Surveys of KAM theory and its applications are:
\cite{Arnold63b, Moser66a, Moser66a, Moser66b, Bost86, Yoccoz92b, Llave01c, CellettiC95, Fejoz17}.

At the beginning of the theory, the quantitative requirements
for applicability led to unrealistic smallness estimates.
In a well known calculation (\cite{Henon66}), M. H\'enon
made a preliminary study of the parameters required
to apply to the three-body problem (\cite{Arnold63a}) and obtained that the
small parameter (representing the Jupiter--Sun mass--ratio) should be smaller than
$10^{-48}$, whereas of course, the real value for Jupiter is about
$10^{-3}$.
Discouraged by this result, the often quoted conclusion of \cite{Henon66} was
that\footnote{``It does not seem that these theorems, though having a great theoretical interest, can be applied,
\bf in their present state\rm, to practical problems" \cite{Henon66}.}

\emph{``Ainsi, ces th\'eor\`emes, bien que d'un tr\`es grand int\'er\^et th\'eorique, ne semblent
pas pouvoir \textbf{en leur \'etat actuel} \^etre appliqu\'es \'a des probl\`emes pratiques''.}

Even if the statement of
\cite{Henon66} is perfectly correct as stated, removing the words
we have set in bold (as it is often done),
 one obtains a statement invalid 50 years after
the original statement.

It is also true that the first attempts to study the
problem numerically were disappointing. The persistence of
quasi-periodic solutions indeed depends
on rather higher regularity of the perturbation
(the smoothness requirements of some
versions of KAM theory are optimal, \cite{Herman83, ChengL13, MramorR14})
and attempts based on low regularity
discretizations such as finite elements were discouraging
(\cite{BraessZ82}).
Furthermore, unless one is careful, one can be misled by spurious solutions.
It is also true that many of the original proofs were based on transformation theory, which
is difficult to implement numerically (one needs to deal
with functions of a high number of variables and impose
that they satisfy geometric constraints).   More successful studies
such as \cite{Greene79, Chirikov79, Aubry79,  Percival74, Percival79, MacKay82}
were based on indirect methods for very specific systems
and were challenged because of being indirect.

By the late 70's it was folklore belief that the estimates of
KAM theory were essentially optimal (the optimal steps for
one step were  known to good approximation,
and one could hope that, by some Baire
category argument, one could find systems that saturate the bounds
to all steps).

By now, the situation has changed drastically.  There are general
bounds based on different schemes (\cite{Brjuno72, Yoccoz95, Llave83, Herman83}),
which lead to substantially better bounds. In practical applications,
one is interested in concrete systems, not on generic ones
and there has been also progress in obtaining estimates in
some specific systems. Later on, many more situations
leading to better bounds were found, see \cite{Celletti90I, Celletti90II, CellettiC95, CellettiC97}.

More related to the present paper, in recent times
there has been a rapid development in proofs of
KAM theorems in the \emph{``a-posteriori''} format
common in numerical analysis.
We recall that an a-posteriori theorem in numerical
analysis is a theorem of the following format.

\begin{format}
\label{aposteriori}
Let $\mathcal{X}_0 \subset \mathcal{X}_1$ be Banach spaces and
$\mathcal{U}\subset \mathcal{X}_0$ an open set. Consider the map
\[\mathcal{F}: \mathcal{U} \subset \mathcal{X}_0 \rightarrow \mathcal{X}_0\ ; \]
assume that there are functionals
$m_1,...,m_n:\mathcal{U} \rightarrow \mathbb{R}^+$ for some $x_0 \in \mathcal{X}_0$, such that:
\begin{enumerate}
\item $\|\mathcal{F}(x_0)\|_{\mathcal{X}_0}< \varepsilon$ for some $\varepsilon\in\real$;
\item $m_1(x_0)\leq M_1, ..., m_n(x_0)\leq M_n$ for some condition numbers $M_1$, ..., $M_n$;
\item $\varepsilon \leq \varepsilon^*(M_1, ... , M_n)$, where $\varepsilon^*$ is an
explicit function of the condition numbers.
\end{enumerate}
Then, there exists an $x^* \in X_1$ such that $\mathcal{F}(x^*) = 0$ and
$\|x_0 - x^*\|_{\mathcal{X}_1} \leq C_{M_1, ..., M_n} \varepsilon$
for some positive constant $C_{M_1,...,M_n}$.
\end{format}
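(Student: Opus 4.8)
The plan is to realize a theorem in this format through a quasi-Newton iteration in the scale of Banach spaces $\mathcal{X}_0 \subset \mathcal{X}_1$, in the spirit of the Nash--Moser/Kantorovich schemes of KAM theory. First I would interpolate between the two given spaces by a one-parameter family $\mathcal{X}_\delta$ (for instance, analytic functions on a complex strip of width $\delta$, or norms of fractional order $\delta$), so that a controlled loss of domain is available at each step. The condition numbers $m_1(x_0),\ldots,m_n(x_0)$ are chosen precisely so that, whenever $x$ is close to $x_0$, one can construct an \emph{approximate right inverse} $\eta \mapsto \mathcal{F}'(x)^{-1}\eta$ of the linearization that is bounded from $\mathcal{X}_\delta$ into $\mathcal{X}_{\delta-\rho}$ with norm controlled by a fixed power of $\rho^{-1}$ times a polynomial in the $M_i$; in concrete KAM problems this is the step where the small divisors and the geometric (symplectic or conformally symplectic) structure enter, and where one must exploit the identities that reduce the linearized equation to cohomology equations over a rotation.

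Second, I would define the iteration $x_{k+1} = x_k - \mathcal{F}'(x_k)^{-1}\mathcal{F}(x_k)$ on domains $\delta_k$ decreasing to some $\delta_\infty > 0$ with $\sum_k (\delta_k - \delta_{k+1}) < \delta_0$, and establish the quadratic estimate $\varepsilon_{k+1} \le A\,(\delta_k-\delta_{k+1})^{-a}\,\varepsilon_k^2$ for the errors $\varepsilon_k = \|\mathcal{F}(x_k)\|_{\mathcal{X}_{\delta_k}}$, together with a linear bound $\|x_{k+1}-x_k\|_{\mathcal{X}_{\delta_{k+1}}} \le B\,(\delta_k-\delta_{k+1})^{-b}\,\varepsilon_k$ on the step size. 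A standard computation then shows that, provided $\varepsilon_0 = \varepsilon < \varepsilon^*(M_1,\ldots,M_n)$ for an explicit threshold built from $A$, $B$, $a$, $b$ and the chosen sequence $\{\delta_k\}$, the errors converge to zero super-exponentially, the iterates $x_k$ stay inside $\mathcal{U}$ (so the hypotheses keep applying) and the $m_i(x_k)$ remain bounded by, say, $2M_i$. Hence $\{x_k\}$ is Cauchy in the weaker norm $\|\cdot\|_{\mathcal{X}_{\delta_\infty}} \lesssim \|\cdot\|_{\mathcal{X}_1}$, its limit $x^*$ lies in $\mathcal{X}_1$, and $\mathcal{F}(x^*)=0$ by passing to the limit using continuity of $\mathcal{F}$. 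Telescoping the step bounds gives $\|x_0 - x^*\|_{\mathcal{X}_1} \le \sum_k \|x_{k+1}-x_k\|_{\mathcal{X}_1} \le C_{M_1,\ldots,M_n}\,\varepsilon$, the asserted conclusion.

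The main obstacle is the \emph{quantitative} construction of the approximate inverse with fully explicit constants: one must track how $\|\mathcal{F}'(x)^{-1}\|$ degrades both with the domain loss $\rho$ and with the distance of $x$ from $x_0$, and one must check that the resulting $\varepsilon^*$ is not so small as to be useless in practice. In the conformally symplectic setting of \cite{CallejaCL11} this amounts to carrying out the automatic reducibility argument --- writing the linearization in a frame adapted to the approximately invariant torus so that, up to quadratically small terms, it becomes a pair of constant-coefficient cohomology equations --- and bookkeeping every constant (the Diophantine constants, the norms of the adapted frame and its inverse, the non-degeneracy of the twist, the conformal factor) that enters. Once this is done with care, the remaining convergence bookkeeping is routine though delicate; the contribution of the present paper is precisely to push those constants all the way to numbers that can be compared with the numerically observed breakdown thresholds.
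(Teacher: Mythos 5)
Your proposal correctly recognizes that Theorem Format~\ref{aposteriori} is a template rather than a provable statement (the paper itself offers no proof of it, only its concrete realization in Theorem~\ref{main}), and your sketch of how to realize it --- a quasi-Newton iteration on a shrinking scale of analytic domains, an approximate right inverse obtained by automatic reducibility to constant-coefficient cohomology equations, quadratic error estimates with domain loss, and a telescoped bound on $\|x_0-x^*\|$ --- is exactly the strategy the paper carries out in Sections~\ref{sec:sketch}--\ref{sec:proofp1p2p3}. No gaps; this is essentially the same approach.
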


Of course, to obtain the statement of a  theorem in
the Format~\ref{aposteriori}, one has to specify
all the ingredients,
$\mathcal{X}_0$,  $\mathcal{X}_1$,  $\mathcal{F}$, $m_1,\ldots, m_n$,
the function $\varepsilon^*$ and provide a proof; Theorems of this
form are very common e.g. in finite elements theory (\cite{Oden}) or in linear
algebra.

As it turns out, one can formulate several
 KAM theorems in this format.
One needs to choose an appropriate functional $\mathcal{F}$
whose zeros imply the  existence of quasi-periodic solutions
(in such applications $x$ is an embedding that belongs to
a suitable space of functions, see \ref{sec:norms}.)

Notice
that in contrast with other more customary versions of KAM
theory, this formulation does not involve that we
are considering a system close to integrable and it
does not require any global assumption on the map, but only some
functionals  evaluated in the approximate solution. In the
problems considered in this paper, the
condition numbers are just averages of algebraic expressions
involving derivatives of the embedding $x_0$
and do not include any global assumption in the maps such as the
twist assumption.

Of course, KAM theory (and a fortiori KAM theory in
an a-posteriori format)  usually makes assumptions on
geometric properties of
 the dynamical system. Roughly, the geometric
properties are used to eliminate adding parameters to
the system.

There are different geometric properties
that lead to a KAM theory (see \cite{Moser67, BroerHS96}
for a  discusssion of the classical
contexts -- general, symplectic, volume preserving, reversible --
formulated in a format which is not a-posteriori).  Other more modern
contexts are presymplectic \cite{AlishahL12}, or closer to
the goals of this  paper, conformally symplectic \cite{CallejaCL11}.

Notice that an a-posteriori theorem allows to validate the
existence of an approximate solution, independently of
how it has been obtained. For example, one can take
as an approximate solution a numerically computed
one (typically this will be a trigonometric polynomial
whose coefficients are chosen among the numbers representable
in a computer).  If one can perform a finite (but  too large for pencil-paper)
number of operations taking care of the rounding off, one
can obtain estimates on $\varepsilon$ and $M_1,...,M_n$.

As it turns out, there exist computer science techniques
(interval analysis \cite{Moore79, Moore85, KaucherM84}) which allow one
to perform these rigorous bounds  mechanically. The coupling of an
a-posteriori theorem with interval arithmetic has led
to many \emph{computer assisted proofs} of mathematically
relevant problems that are reduced to the existence of a
fixed point\footnote{We note, however, that, besides computer
assisted proofs based on fixed point theorems,
there are other computer assisted proofs which do not involve
fixed points theorems, but which are based on
other arguments (exclusion of matches, algebraic operations, etc.).}.
 A particularly emblematic computer
assisted proof based on an a-posteriori theorem and interval arithmetic is
\cite{Lanford82}, but there are many other proofs based on
a-posteriori theorems for fixed points\footnote{ The proof of
\cite{Lanford82} used only a Banach contraction argument
and indeed most of the computer assisted proofs rely on
a contraction mapping argument. In  our case, we need to rely on
more sophisticated Nash-Moser arguments.}.

Therefore, a  way to prove the existence of
a quasi-periodic solution has different stages,
each of them requiring a different methodology.

\begin{itemize}
\item[A)] For a fixed geometric context, prove an a posteriori
KAM theorem.
\item[B)] Make sure that the conditions of the a-posteriori theorem  in  part A
are made explicit and computable.
\item[C)] Produce approximate solutions.
\item[D)] Verify the conditions given in B) on the approximate solutions
produced in C).
\end{itemize}

This strategy for two dimensional symplectic mappings was implemented
in \cite{Rana87} and in \cite{FHL}.  The paper \cite{Rana87} also
considered upper and lower bounds of Siegel radius and proved they would
converge to the right value if given enough computer resources.
The paper \cite{FHL} gives a very innovative implementation of a-posteriori
KAM estimates by proposing an efficient computer-assisted method. The technique
is successfully applied to the standard map, obtaining estimates in agreement
of $99.9\%$ with the numerical threshold. The paper has also considered applications
to the non-twist standard map and to the Froeschl\'e map.

We note that, in principle, the above methodology can continue in parameters
and establish the results even arbitrarily close to the values of the
parameters where the result is no longer true. Of course, in practice, one is
limited by the computer resources available (e.g. computer memory or time). We will show
that for some emblematic problems, even modest resources
(a common today's desktop) can produce results quite close
to optimal.

The parts A), B), C) and D) above require different methodologies
and are, in principle, independent. In practice, there are some
relations (e.g. the choice of spaces in the mathematical proofs
is related to the numerical methods used). This is why we decided
to present the results in a single paper rather than separate it
in logically independent units.

Part A) requires the traditional methods of mathematics, but the goals
should be an efficient and explicit formulation that makes efficient
the other parts of the strategy.
Notably, the functional equations
should involve functions of as little variables as possible -- the difficulty of dealing with functions grows very fast with the number of variables.
This is known as the \emph{curse of dimensionality}.
Moreover, the norms should be easy to evaluate. The spaces one
is dealing with should be easily parameterizable, preferably by linear
combinations of functions -- for example, parametrizing symplectic transformations requires using generating functions
to impose the very
nonlinear constraint of preserving the symplectic form.

Part B) is in principle straightforward, but a high quality implementation
requires taking advantage of the cancellations and organize the estimates
very efficiently.  Also, some non-constructive arguments need
to be replaced by constructive arguments.

Part C)  is very traditional in numerical analysis
and can be accomplished in many ways, for example discretizing the invariance
equation, but we stress that there are some interactions with the other parts.

Many of the more modern proofs in Part A)
are based in describing an iterative process and showing it
converges when started on a sufficiently approximate solution.
For our case, the proof presented in \cite{CallejaCL11} is
particularly well suited. It leads to a quadratically convergent
algorithm that requires little storage and a small operation count
per step.  On the other hand, the Newton method
relies on having a good approximate solution. The algorithm can
be used as the basis of a continuation method. Notice that the
method does not rely on indirect methods such as \cite{Greene79}
and that it is generally applicable
(i.e., one can take any system and let the continuation run).
The method of \cite{Greene79} requires continuing high period orbits,
which is problematic in systems with several harmonics (\cite{FalcoliniL92a,Lom-Call-06}).

We also note that, in order to have an effective part D), the discretization
used has to be such that it allows the evaluation of the norms involved.
As indicated above, the KAM theorem requires derivatives of rather high order,
so it seems that a Fourier discretization could be effective if
we consider norms that can be read from the Fourier coefficients.
This is particularly effective because the functional equations used in
part A) lead to functions whose maximal domain is a complex  strip (the domain
is invariant under an irrational translation), which are
the natural domains of convergence of Fouriers series.

Part D) is in principle straightforward since
the number of operations is rather small.  As mentioned before,
it can be made fully rigorous using interval arithmetic.

The goal of this paper is to  implement this strategy for
conformally symplectic mappings and obtain concrete results
for an emblematic example that has been considered many times
in the literature.  One caveat is that for part D), we have
not implemented interval arithmetic, but have performed
the calculations with more than 100 digits of precision
and with several precisions.

\subsection{Organization of the paper}
\label{sec:organization}

This paper is organized as follows.  In Section~\ref{sec:preliminaries},
we present some standard preliminaries, including norms, Cauchy estimates,
the Diophantine inequality, the solution of the cohomology equation,
the definition of conformally symplectic systems, the introduction and
properties of the dissipative standard map.

In Section~\ref{sec:mainthm}
we state  a very explicit KAM theorem in an a-posteriori format, Theorem~\ref{main}
which implements part A) of the strategy indicated above.
The statement of Theorem~\ref{main} includes the explicit formulation of the
smallness conditions on the parameters ensuring the existence of an exact solution
of the invariance equation. Such conditions depend on a set of constants, whose
explicit expression is given in Appendix~\ref{app:constants}).
The proof of Theorem~\ref{main} is reviewed in Section~\ref{sec:sketch}.
The proof follows closely the proof in \cite{CallejaCL11}, but we
take advantage that we will consider a specific model in which the tori are one--dimensional
and such that the symplectic form is the standard one.
Some of
the most straigtforward calculations have been relegated to Appendix~\ref{app:constants}.

\section{Preliminaries}
\label{sec:preliminaries}

In this Section, we collect several notions that play a role in our
results. We will describe the models, some of their properties
and describe the examples. The  material in Section~\ref{sec:lemmas}
concerns standard properties of
analytic functions and can be used mainly as a reference for the notation.
In Section~\ref{sec:conformallysymplectic}
we introduce conformally symplectic systems, which are
the main geometric assumption in our results. In Section~\ref{sec:model}
we introduce the concrete model we will study and which has been
widely investigated in the literature.

\subsection{Norms and preliminary Lemmas}\label{sec:lemmas}
In this Section we need to specify the norms
(see Section~\ref{sec:norms}), to estimate the composition of functions (see Section~\ref{sec:comp}), to bound
derivatives (see Section~\ref{sec:der}), to introduce Diophantine numbers (see Section~\ref{sec:Diophantine}),
and to give estimates of a cohomology equation associated to the
linearization of the invariance equation (see Section~\ref{sec:cohomology}).

\subsubsection{Norms}\label{sec:norms}
For a vector $v=\left( \begin{array}{c} v_1 \\ v_2 \\ \end{array} \right)\in\real^2$, we define its norm as
$$
||v||=|v_1|+|v_2|\ .
$$
For a matrix $A=\left( \begin{array}{cc} a_{11}&a_{12} \\ a_{21}&a_{22} \\ \end{array} \right)\in\real^2\times\real^2$,
we define its norm as
$$
||A||=\max\Big\{|a_{11}|+|a_{21}|,\ |a_{12}|+|a_{22}|\Big\}\ .
$$
To define the norm of functions and vector functions, we start by introducing
for $\rho>0$ the following complex extensions of a torus $\torus$, of a set $B$
and of the manifold $\M=B\times\torus$:
\beqano
\torus_\rho&\equiv& \{z=x+iy\in\complex/\integer:\ x\in\torus\ ,\ |y|\leq\rho\}\ ,\nonumber\\
B_\rho&\equiv &\{z=x+iy\in\complex:\ x\in B\ ,\quad |y|\leq \rho\}\ ,\nonumber\\
\M_\rho&=&B_\rho\times \torus_\rho\ .
\eeqano

We denote by $\A_{\rho}$ the set of functions which are analytic in $\Int (\torus_\rho)$ and that
extend continuously to the boundary of $\torus_\rho$. Within such set, we introduce the norm
$$
\|f\|_{\rho}=\sup_{z\in\torus_\rho} |f(z)|\ .
$$
For a vector valued function $f=(f_1,f_2,...,f_n)$, $n\geq 1$, we define the norm
\beq{normv}
\|f\|_{\rho}=\|f_1\|_{\rho}+\|f_2\|_{\rho}+...+\|f_n\|_{\rho}\ .
\eeq
For an $n_1\times n_2$ matrix valued function $F$ we define
\beq{normm}
\|F\|_{\rho}=\sum_{i=1}^{n_1} \sup_{j=1,...,n_2} \|F_{ij}\|_{\rho}\ .
\eeq
Notice that if $F$ is a matrix valued function and $f$ is a vector valued function, then one has
$$
\|F\,f\|_{\rho}\leq \|F\|_{\rho}\ \|f\|_{\rho}\ .
$$

\subsubsection{Composition Lemma}\label{sec:comp}
Composition of two functions is an important operation in
dynamical systems. Indeed, our main functional equation, see
\eqref{invariance} below, involves composition.

\begin{lemma}\label{lem:comp}
Let $F \in \A_{\C}$ be an analytic function on a
domain $\C\subset \complex\times \complex/\integer $.

Assume that the function $g$ is such that
$g(\torus_{\rho}) \subset \C$ and $g\in\A_{\rho}$ with $\rho >0$. Then, $F\circ g \in \A_{\rho}$
and
\[
\|F \circ g\|_{\rho} \leq \|F\|_{\A_{\C}}\ ,
\]
where $\|F\|_{\A_{\C}}= \sup_{z\in{\C}}\ |F(z)|$.

If, furthermore, we have that $\dist( g( \A_\rho), \complex \setminus \C) = \eta > 0$, then we have:
\begin{itemize}
\item
For all $h \in \A_\rho$ with $\| h \|_\rho < \eta/4 $, we can define
$F \circ( g + h)$.
\item
We have:
\[
\| F\circ(g + h) - F \circ g \|_\rho \le
\sup_{z, \dist(z, \C) \le \eta/4}( |DF(z)|)\ \| h\|_\rho\ ,
\]

\[
\| F\circ(g + h) - F \circ g - DF\circ g \, h \|_\rho \le
 \frac{1}{2}\sup_{z, \dist(z, \C) \le \eta/4}( |D^2F(z)|) \  \| h\|_\rho^2\ .
\]
\end{itemize}
\end{lemma}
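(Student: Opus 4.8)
The plan is to prove the three assertions of Lemma~\ref{lem:comp} in order, using only the sup-norm over $\torus_\rho$, the hypothesis $g(\torus_\rho)\subset\C$ (resp. the distance hypothesis), and the elementary calculus facts that an analytic function is bounded by its sup-norm and that composition of analytic functions is analytic.

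For the first claim, since $g\in\A_\rho$ maps $\torus_\rho$ into $\C$ and $F$ is analytic on $\C$, the composition $F\circ g$ is analytic on $\Int(\torus_\rho)$ and continuous up to the boundary, so $F\circ g\in\A_\rho$. For every $z\in\torus_\rho$ we have $g(z)\in\C$, hence $|F(g(z))|\le\sup_{w\in\C}|F(w)|=\|F\|_{\A_\C}$; taking the sup over $z\in\torus_\rho$ gives $\|F\circ g\|_\rho\le\|F\|_{\A_\C}$. Next, assume $\dist(g(\torus_\rho),\complex\setminus\C)=\eta>0$ and let $h\in\A_\rho$ with $\|h\|_\rho<\eta/4$. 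Then for each $z\in\torus_\rho$, the point $g(z)+h(z)$ lies within distance $\eta/4$ of $g(z)\in g(\torus_\rho)$, so $\dist(g(z)+h(z),\complex\setminus\C)\ge\eta-\eta/4>0$; in particular $g(z)+h(z)\in\C$, so $F\circ(g+h)$ is well defined (and again analytic on the interior, continuous to the boundary).

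For the two quantitative estimates I would fix $z\in\torus_\rho$ and work on the complex segment $t\mapsto g(z)+t\,h(z)$, $t\in[0,1]$. This segment stays in the closed set $\{w:\dist(w,\C)\le\eta/4\}$ — indeed it stays in $\C$ itself — so $F$, $DF$, $D^2F$ are all controlled there by the suprema appearing in the statement. The fundamental theorem of calculus gives
\[
F(g(z)+h(z))-F(g(z))=\int_0^1 DF(g(z)+t\,h(z))\,h(z)\,dt\ ,
\]
and taking absolute values and then the sup over $z$ yields the first inequality. For the second, Taylor's formula with integral remainder gives
\[
F(g(z)+h(z))-F(g(z))-DF(g(z))\,h(z)=\int_0^1 (1-t)\,D^2F(g(z)+t\,h(z))[h(z),h(z)]\,dt\ ,
\]
and since $\int_0^1(1-t)\,dt=\tfrac12$, bounding the integrand by $\sup_{\dist(w,\C)\le\eta/4}|D^2F(w)|\,\|h\|_\rho^2$ and taking the sup over $z$ gives the claimed factor $\tfrac12$.

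I do not anticipate a serious obstacle here: the only subtlety is bookkeeping the domains so that every point encountered in the one-dimensional Taylor expansions genuinely lies in (or close to) $\C$, which is exactly what the hypothesis $\|h\|_\rho<\eta/4$ buys — and in fact the bound $\eta/4$ is more generous than strictly needed for well-definedness alone (it leaves room so that the enlarged set $\{\dist(\cdot,\C)\le\eta/4\}$, on which the derivative suprema are taken, still contains the relevant segments with margin to spare, which is convenient when these estimates are later combined with Cauchy bounds on derivatives of $F$). The step requiring the most care is therefore simply the verification that $g(z)+t\,h(z)\in\C$ for all $t\in[0,1]$, after which the two displayed inequalities follow from the standard one-variable Taylor expansions and the submultiplicativity of the matrix/vector norms recorded in Section~\ref{sec:norms}.
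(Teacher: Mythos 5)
Your proof is correct; the paper states Lemma~\ref{lem:comp} without proof as a standard preliminary, and your argument (analyticity of compositions, the triangle inequality for well-definedness of $F\circ(g+h)$, and the mean value/Taylor formulas with integral remainder along the segment $g(z)+t\,h(z)$, which stays in $\C$ because $\|h\|_\rho<\eta/4<\eta$) is exactly the standard one the authors intend. The only point worth noting is that the suprema in the statement are taken over the neighborhood $\{\dist(\cdot,\C)\le\eta/4\}$, which, as you observe, contains the segments in question, so your bounds indeed imply the stated ones.
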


\subsubsection{Cauchy estimates on the derivatives}\label{sec:der}
Estimates on the derivatives will be needed throughout the whole proof of the main result
(Theorem~\ref{main}).

\begin{lemma} \label{lem:Cauchy}
For a function $h\in\A_{\rho}$, we have the following estimate on the first derivative
on a smaller domain:
\beq{cc}
\|Dh\|_{\rho-\delta}\leq C_c\ \delta^{-1}\ \|h\|_\rho\ ,\qquad C_c=1\ ,
\eeq
where $0<\delta<\rho$.
For the $\ell$--th order derivatives with $\ell\geq 1$, one has:
$$
\|D^\ell h\|_{\rho-\delta}\leq C_{c,\ell}\ \delta^{-\ell}\ \|h\|_\rho\ ,\qquad C_{c,\ell}=\ell!\ (2 \pi)^{-1}\ .
$$
\end{lemma}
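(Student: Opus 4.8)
The statement to prove is Lemma~\ref{lem:Cauchy}, the Cauchy estimates on derivatives of analytic functions on the complex strip $\torus_\rho$. The plan is to exploit the Fourier representation of functions in $\A_\rho$: since $h$ is $\integer$-periodic and analytic on $\Int(\torus_\rho)$, it admits a Fourier expansion $h(z) = \sum_{k \in \integer} \hat h_k e^{2\pi i k z}$, but I will not actually need the coefficient-by-coefficient estimates. Instead the cleanest route for the first derivative is the classical Cauchy integral formula along circles in the direction transverse to the strip, adapted to the periodic setting.

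For the first-order estimate with $C_c = 1$, I would fix a point $z_0 \in \torus_{\rho-\delta}$ and write
\[
Dh(z_0) = \frac{1}{2\pi i} \int_{|\zeta - z_0| = \delta} \frac{h(\zeta)}{(\zeta - z_0)^2}\, d\zeta\ ;
\]
this is legitimate because the disk of radius $\delta$ around $z_0$ stays inside $\torus_\rho$ (its imaginary part is bounded by $(\rho-\delta)+\delta = \rho$), and on that disk $h$ is analytic. Bounding the integrand by $\|h\|_\rho / \delta^2$ and the length of the contour by $2\pi\delta$ gives $\|Dh\|_{\rho-\delta} \le \delta^{-1}\|h\|_\rho$, i.e.\ $C_c = 1$. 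One subtlety worth a remark: the disk of radius $\delta$ may cross the line $x = 0 \equiv 1$, but since $h$ is genuinely $\integer$-periodic and analytic there, this causes no problem — the formula is applied to the periodic extension.

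For the $\ell$-th order estimate I would instead go through the Fourier coefficients, since that is where the sharper constant $\ell!\,(2\pi)^{-1}$ comes from. Writing $\hat h_k = \int_0^1 h(x+iy) e^{-2\pi i k(x+iy)}\,dx$ for any $|y| \le \rho$ and optimizing the choice of $y = \pm\rho$ (sign opposite to that of $k$) gives $|\hat h_k| \le \|h\|_\rho e^{-2\pi|k|\rho}$. Then for $z \in \torus_{\rho-\delta}$,
\[
|D^\ell h(z)| \le \sum_{k \in \integer} (2\pi|k|)^\ell |\hat h_k|\, e^{2\pi|k|(\rho-\delta)}
\le \|h\|_\rho \sum_{k \in \integer} (2\pi|k|)^\ell e^{-2\pi|k|\delta}\ .
\]
The remaining task is to bound $\sum_{k\in\integer}(2\pi|k|)^\ell e^{-2\pi|k|\delta}$ by $\ell!\,(2\pi)^{-1}\delta^{-\ell}$; comparing the sum with the integral $2\int_0^\infty (2\pi t)^\ell e^{-2\pi t\delta}\,dt = (2\pi)^{-1}\delta^{-\ell}\cdot 2\,\Gamma(\ell+1) \cdot \tfrac{1}{2}$ — after the substitution $u = 2\pi t\delta$ — yields the factor $\ell!$, and a standard monotonicity argument controls the discretization.

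The main obstacle I anticipate is purely bookkeeping: making the comparison between the Fourier series sum and the Gamma-function integral produce \emph{exactly} the constant $\ell!\,(2\pi)^{-1}$ rather than something slightly larger, since a naive sum-to-integral bound for a function that is not monotone on all of $[0,\infty)$ (the summand $(2\pi t)^\ell e^{-2\pi t\delta}$ increases then decreases) requires a little care near its maximum. A clean way around this is to note that the two-sided sum $\sum_{k\neq 0}$ is dominated by $2\int_0^\infty$ because the integrand, viewed as a function on $[0,\infty)$, has integral at least as large as the sum $\sum_{k\ge1} f(k)$ once one accounts for the unimodal shape — or, even more simply, to differentiate the geometric series $\sum_k e^{2\pi i k z}$ formally $\ell$ times is not available here, so I would instead just invoke the known sharp form of the Cauchy estimate on strips (this is exactly the statement in, e.g., standard KAM references), treating the detailed constant-chasing as the routine calculation it is. Everything else — analyticity of $F\circ g$, the composition bounds — is not needed for this lemma; only the periodicity and analyticity of $h$ on $\torus_\rho$ enter.
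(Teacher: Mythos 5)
Your treatment of the first estimate is correct and is the standard argument: apply the Cauchy integral formula on the circle $|\zeta-z_0|=\delta-\epsilon$ around $z_0\in\torus_{\rho-\delta}$ (taking $\epsilon\to0$, since $h$ is only continuous on $\partial\torus_\rho$), bound the integrand by $\|h\|_\rho\delta^{-2}$ and the contour length by $2\pi\delta$, and use periodicity to dispose of the wrap-around issue. This gives $C_c=1$. The paper states the lemma without proof, so there is nothing to compare against for this part.

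The second part has a genuine gap, in two layers. First, your evaluation of the comparison integral is wrong: with $u=2\pi t\delta$ one gets
\[
2\int_0^\infty(2\pi t)^\ell e^{-2\pi t\delta}\,dt\;=\;\frac{2\,\Gamma(\ell+1)}{2\pi\,\delta^{\ell+1}}\;=\;\frac{\ell!}{\pi}\,\delta^{-(\ell+1)},
\]
not $(2\pi)^{-1}\ell!\,\delta^{-\ell}$; the power of $\delta$ is $-(\ell+1)$. This is not a constant-chasing detail you can defer: the route through $|\hat h_k|\le\|h\|_\rho e^{-2\pi|k|\rho}$ followed by resummation of $\sum_k(2\pi|k|)^\ell e^{-2\pi|k|\delta}$ intrinsically costs one extra power of $\delta^{-1}$ relative to the sup-norm Cauchy estimate (already for $\ell=1$ the sum is $4\pi q/(1-q)^2\sim\pi^{-1}\delta^{-2}$ with $q=e^{-2\pi\delta}$), so no care near the maximum of the summand will recover $\delta^{-\ell}$. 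Second, the target constant $C_{c,\ell}=\ell!\,(2\pi)^{-1}$ cannot be proved because it is false as stated: for $\ell=1$ it would assert $\|Dh\|_{\rho-\delta}\le(2\pi)^{-1}\delta^{-1}\|h\|_\rho$, which is $2\pi$ times stronger than the first estimate, and $h(z)=e^{2\pi iz}$ with $\delta=1/(2\pi)<\rho$ gives $\|Dh\|_{\rho-\delta}/\|h\|_\rho=2\pi e^{-1}\approx2.31$ while $(2\pi\delta)^{-1}=1$. The provable classical statement, obtained by iterating your contour argument with $D^\ell h(z_0)=\frac{\ell!}{2\pi i}\oint h(\zeta)(\zeta-z_0)^{-\ell-1}\,d\zeta$, is $\|D^\ell h\|_{\rho-\delta}\le\ell!\,\delta^{-\ell}\|h\|_\rho$, i.e. $C_{c,\ell}=\ell!$ without the factor $(2\pi)^{-1}$; your closing move of simply invoking ``the known sharp form'' is not available, since the precise constant is the entire content of the statement.
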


Notice that the Cauchy constant $C_c$ might assume different values,
if one adopts different norms with respect to \equ{normv}, \equ{normm} (this is why we keep a
symbol for such constant).

\subsubsection{Diophantine numbers}\label{sec:Diophantine}

The following definition is standard in number theory and appears
frequently in KAM theory.

\begin{definition}
Let $\omega\in\real$, $\tau\geq 1$, $\nu\geq 1$. We say that $\omega$ is Diophantine of
class $\tau$ and constant $\nu$, if the following inequality is satisfied:
\begin{equation}
|\omega \,k-q|\ \geq\ \nu |k|^{-\tau}\ ,\qquad q\in \zed\ ,\quad
k\in \zed\backslash\{0\}\ .
\label{DC}
\end{equation}
\end{definition}

The set of Diophantine numbers satisfying \equ{DC}
is denoted by $\D(\nu,\tau)$. The union over $\nu>0$ of the sets $\D(\nu,\tau)$ has full Lebesgue measure in $\real$.\\

\subsubsection{Estimates on the cohomology equation}\label{sec:cohomology}
Given any Lebesgue measurable function $\eta$, we consider the following cohomology equation:
\beq{difference}
\varphi(\theta+\omega)-\lambda \varphi(\theta)=\eta(\theta)\ ,\qquad \theta\in\torus\ .
\eeq
The solution of an equation of the form \equ{difference} will be an essential
ingredient of the proof, see e.g. \equ{ceq} below.
The two following Lemmas show that there is one Lebesgue measurable function $\varphi$, which is the solution
of \equ{difference}. Precisely, Lemma~\ref{contractive} applies for $|\lambda| \ne1$, $\omega\in \real$ and it
provides a non-uniform estimate on the solution, while Lemma~\ref{neutral} applies to any $\lambda$ and any
$\omega$ Diophantine, and it provides a uniform estimate on the solution.

\begin{lemma}\label{contractive}
Assume $|\lambda| \ne1$, $\omega\in \real$.
Then, given any Lebesgue  measurable function $\eta$, there is one
Lebesgue  measurable function $\varphi$
satisfying \eqref{difference}.
Furthermore, the following estimate holds:
$$
\|\varphi\|_{\rho}  \le\big|\,|\lambda| -1\big|^{-1}\|\eta\|_{\rho}\ .
$$
Moreover, one can bound the derivatives of $\varphi$ with respect to $\lambda$ as
$$
\|D_\lambda^j \varphi\|_{\rho}
\le {j!\over {\big|\,|\lambda|-1\big|^{j+1}}}\ \|\eta\|_{\rho}\ ,\qquad j\geq 1\ .
$$
\end{lemma}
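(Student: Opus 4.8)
The plan is to solve \equ{difference} explicitly by a geometric (Neumann) series, choosing the direction in which to iterate according to whether $|\lambda|<1$ or $|\lambda|>1$, and then to read off the $C^0$ bound and the bounds on the $\lambda$-derivatives directly from the series. Since $|\lambda|\neq1$, one of the two ``transport'' operators $\varphi\mapsto\lambda\varphi(\cdot-\omega)$ or $\varphi\mapsto\lambda^{-1}\varphi(\cdot+\omega)$ is a contraction in the sup norm, and the corresponding fixed-point iteration converges geometrically.

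First I would treat the case $|\lambda|<1$. Rewriting \equ{difference} as $\varphi(\theta)=\lambda\,\varphi(\theta-\omega)+\eta(\theta-\omega)$ and iterating backwards along the translation by $\omega$ leads to the candidate
\beqno
\varphi(\theta)=\sum_{n=1}^{\infty}\lambda^{\,n-1}\,\eta(\theta-n\omega)\ .
\eeqno
Each term $\eta(\cdot-n\omega)$ is a real translate of $\eta$, hence Lebesgue measurable (and, in the analytic category, it lies in $\A_\rho$ with $\|\eta(\cdot-n\omega)\|_\rho=\|\eta\|_\rho$, because a real translation preserves the strip $\torus_\rho$); so the series converges — uniformly on $\torus_\rho$ when $\|\eta\|_\rho<\infty$ — and defines a measurable $\varphi$. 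A one-line reindexing shows $\varphi(\theta+\omega)-\lambda\varphi(\theta)=\eta(\theta)$, so $\varphi$ solves \equ{difference}, and taking norms term by term gives $\|\varphi\|_\rho\le\sum_{n\ge1}|\lambda|^{n-1}\|\eta\|_\rho=(1-|\lambda|)^{-1}\|\eta\|_\rho=\big|\,|\lambda|-1\big|^{-1}\|\eta\|_\rho$. Symmetrically, for $|\lambda|>1$ I would rewrite \equ{difference} as $\varphi(\theta)=\lambda^{-1}\bigl(\varphi(\theta+\omega)-\eta(\theta)\bigr)$ and iterate forward, obtaining
\beqno
\varphi(\theta)=-\sum_{n=0}^{\infty}\lambda^{-(n+1)}\,\eta(\theta+n\omega)\ ,
\eeqno
which converges for the same reason, solves \equ{difference} after reindexing, and satisfies $\|\varphi\|_\rho\le\sum_{n\ge0}|\lambda|^{-(n+1)}\|\eta\|_\rho=(|\lambda|-1)^{-1}\|\eta\|_\rho=\big|\,|\lambda|-1\big|^{-1}\|\eta\|_\rho$.

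For the estimates on $D_\lambda^j\varphi$ I would differentiate the two series term by term — legitimate since on any disc $\{|\lambda|\le r<1\}$ (respectively annulus $\{|\lambda|\ge r>1\}$) the differentiated series is dominated by a convergent series independent of $\theta$ — using $D_\lambda^j(\lambda^{n-1})=\frac{(n-1)!}{(n-1-j)!}\,\lambda^{\,n-1-j}$ in the first case and $D_\lambda^j(\lambda^{-(n+1)})=(-1)^j\frac{(n+j)!}{n!}\,\lambda^{-(n+1+j)}$ in the second, together with the binomial identity $\sum_{m\ge0}\binom{m+j}{j}x^m=(1-x)^{-(j+1)}$. Plugging in $x=|\lambda|$ in the first case and $x=|\lambda|^{-1}$ in the second, both computations collapse (after the elementary simplification $|\lambda|^{-(j+1)}\bigl(1-|\lambda|^{-1}\bigr)^{-(j+1)}=(|\lambda|-1)^{-(j+1)}$) to the asserted bound $\|D_\lambda^j\varphi\|_\rho\le j!\,\big|\,|\lambda|-1\big|^{-(j+1)}\|\eta\|_\rho$.

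I do not anticipate a serious obstacle; the only points that require a little care are choosing the correct iteration direction so that the Neumann series actually converges, and the bookkeeping in the binomial identity used for the derivative bounds. If one also wants the uniqueness implicit in the statement, it suffices to observe that the difference of two bounded solutions solves the homogeneous equation $\psi(\theta+\omega)=\lambda\psi(\theta)$, whose only bounded solution is $\psi\equiv0$ when $|\lambda|\neq1$ (iterate to get $\|\psi\|_\rho=|\lambda|^{\pm n}\|\psi\|_\rho$ for all $n$).
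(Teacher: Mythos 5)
Your proof is correct and is essentially the standard argument: the paper does not reprove Lemma~\ref{contractive} (it is quoted from \cite{CallejaCL11}), and the proof there is precisely this Neumann-series construction, iterating backwards for $|\lambda|<1$ and forwards for $|\lambda|>1$ and reading the $C^0$ and $D_\lambda^j$ bounds off the term-by-term estimates. Your computations (the reindexing check, the geometric sums, and the binomial identity for the derivative bounds) all check out, and the uniqueness remark for bounded solutions is the right supplement.
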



\begin{lemma}\label{neutral}
Consider \eqref{difference} for $\lambda \in [A_0, A_0^{-1}]$ for some
$0 < A_0< 1$ and let $\omega\in \D(\nu,\tau)$. Assume that $\eta \in
\A_{\rho}$, $\rho>0$ and that
$$
\int_{\torus} \eta(\theta)\, d\theta =0\ .
$$
Then, there is one and  only one solution of \eqref{difference}
with zero average: $\int_{\torus} \varphi(\theta)\, d\theta =0$. Furthermore, if $\varphi \in \A_{\rho-\delta}$ for
$0<\delta<\rho$, then we have
\beq{estimate}
\|\varphi\|_{\rho-\delta}
\le C_0\ \nu^{-1}\ \delta^{-\tau} \|\eta\|_{\rho}\ ,
\eeq
where
\beq{Cu}
C_0={1\over {(2\pi)^\tau}}\ {\pi\over {2^\tau (1+\lambda)}}\ \sqrt{{\Gamma(2\tau+1)}\over 3}\ .
\eeq
\end{lemma}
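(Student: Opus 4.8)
The plan is to solve the cohomology equation \equ{difference} by passing to Fourier coefficients, then to estimate the resulting series using the Diophantine condition \equ{DC} together with a standard optimization over the loss of analyticity domain. Write $\eta(\theta) = \sum_{k \in \zed\setminus\{0\}} \hat\eta_k e^{2\pi i k\theta}$ (the zero-th coefficient vanishes by the hypothesis on the average), and look for $\varphi(\theta) = \sum_{k \in \zed\setminus\{0\}} \hat\varphi_k e^{2\pi i k\theta}$. Substituting into \equ{difference} and matching coefficients yields $\hat\varphi_k (e^{2\pi i k\omega} - \lambda) = \hat\eta_k$, so that $\hat\varphi_k = \hat\eta_k / (e^{2\pi i k\omega} - \lambda)$ for $k \ne 0$; this is the unique choice with zero average, giving existence and uniqueness. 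The main point is to bound the small divisor $|e^{2\pi i k\omega} - \lambda|$ from below. For $\lambda \in [A_0, A_0^{-1}]$ real and positive, $|e^{2\pi i k\omega} - \lambda|^2 = 1 - 2\lambda\cos(2\pi k\omega) + \lambda^2 \ge (1+\lambda)^2 (1 - \cos(2\pi k\omega))/{\rm something}$; more precisely one uses $|e^{2\pi i k\omega} - \lambda| \ge |{\rm Im}(e^{2\pi i k\omega})| = |\sin(2\pi k\omega)|$ and then $|\sin(2\pi k\omega)| \ge 2 \|k\omega\| \ge 2\nu |k|^{-\tau}$ (where $\|\cdot\|$ is the distance to the nearest integer), invoking \equ{DC}. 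This gives $|\hat\varphi_k| \le |\hat\eta_k| / (2\nu |k|^{-\tau})$; one has to track the exact constant carefully, since the statement records the sharp form $C_0$ in \equ{Cu}, including the $(1+\lambda)$ in the denominator, which comes from a slightly more careful lower bound on the divisor rather than the crude $|\sin|$ estimate.

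Next I would pass from the coefficient bound to the norm bound on the strip. Using $|\hat\eta_k| \le \|\eta\|_\rho e^{-2\pi |k| \rho}$ (the standard Cauchy estimate for Fourier coefficients of functions in $\A_\rho$), one gets
\[
\|\varphi\|_{\rho-\delta} \le \sum_{k \ne 0} |\hat\varphi_k| e^{2\pi |k|(\rho-\delta)}
\le \frac{\|\eta\|_\rho}{2\nu} \sum_{k \ne 0} |k|^\tau e^{-2\pi |k| \delta}\ .
\]
The tail sum $\sum_{k\ge 1} k^\tau e^{-2\pi k\delta}$ is then estimated by comparison with the integral $\int_0^\infty t^\tau e^{-2\pi t\delta}\,dt = \Gamma(\tau+1)/(2\pi\delta)^{\tau+1}$, or more sharply by a direct argument that produces the $\sqrt{\Gamma(2\tau+1)/3}$ factor appearing in \equ{Cu} (this sharper bound typically comes from a Cauchy–Schwarz step applied to $\sum |k|^\tau e^{-2\pi|k|\delta} = \sum |k|^\tau e^{-\pi|k|\delta} \cdot e^{-\pi|k|\delta}$, bounding one factor in $\ell^\infty$ after optimizing in $|k|$ and the other in $\ell^1$, or by squaring and using $\sum k^{2\tau} x^k$). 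Collecting the powers of $2\pi$, $2^\tau$, and $\delta^{-\tau}$ and folding them into $C_0$ gives the claimed estimate \equ{estimate}.

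The main obstacle — or rather the only place requiring genuine care rather than routine bookkeeping — is getting the \emph{sharp} constant $C_0$ in \equ{Cu} instead of a crude one: the interplay between the lower bound on the small divisor (which must retain the $(1+\lambda)$ factor, so one cannot simply use $|e^{2\pi ik\omega}-\lambda| \ge |\sin(2\pi k\omega)|$ but should exploit that for $\lambda$ near $1$ the divisor is comparable to $(1+\lambda)\|k\omega\|$) and the summation of the tail (which must be done via the $L^2$-type trick to produce $\sqrt{\Gamma(2\tau+1)/3}$ rather than $\Gamma(\tau+1)$). Since this lemma feeds directly into the explicit condition numbers of Theorem~\ref{main}, every factor matters, and the proof must be organized so that no constant is wasted; the analyticity-loss optimization itself (choosing how to split $\rho$ into $\rho-\delta$ and $\delta$) is deferred to where the lemma is applied, so here $\delta$ is kept as a free parameter.
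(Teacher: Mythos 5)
Your Fourier-side setup (uniqueness of the zero-average solution and $\hat\varphi_k=\hat\eta_k/(e^{2\pi ik\omega}-\lambda)$) matches the paper, and you correctly sense that the divisor bound must retain the factor $(1+\lambda)$. The paper gets it cleanly from the half-angle identity $|e^{2\pi ik\omega}-\lambda|^2=(1-\lambda)^2\cos^2(\pi k\omega)+(1+\lambda)^2\sin^2(\pi k\omega)\ge(1+\lambda)^2\sin^2(\pi k\omega)\ge 4(1+\lambda)^2Z_k^2$, with $Z_k=\min_{q\in\zed}|\omega k-q|$, whence $|e^{2\pi ik\omega}-\lambda|\ge 2(1+\lambda)\nu|k|^{-\tau}$. (Incidentally, your ``crude'' inequality $|\sin(2\pi k\omega)|\ge 2Z_k$ is false when $Z_k$ is near $1/2$; one must work with $\sin(\pi k\omega)$, which is exactly what the half-angle identity sets up.)

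The genuine gap is in the summation step, and it is not merely a matter of sharp constants: it affects the exponent of $\delta$. Your primary route bounds $|\hat\eta_k|\le\|\eta\|_\rho e^{-2\pi|k|\rho}$ and then sums $\sum_{k\ne0}|k|^\tau e^{-2\pi|k|\delta}$, which by your own integral comparison is of order $\Gamma(\tau+1)(2\pi\delta)^{-\tau-1}$, i.e.\ $\delta^{-\tau-1}$, not the $\delta^{-\tau}$ claimed in \equ{estimate}; the Cauchy--Schwarz variant you sketch (splitting $e^{-2\pi|k|\delta}$ after already passing to the sup bound on the coefficients) does not repair this. The paper proceeds differently: Cauchy--Schwarz is applied while the Fourier coefficients are still present, the first factor being controlled by the Bessel-type inequality $\sum_k|\hat\eta_k|^2e^{4\pi\rho|k|}\le 2\|\eta\|_\rho^2$, so that the entire loss of domain is concentrated in $F(\delta)=4\sum_{k\ge1}e^{-4\pi\delta k}\,|e^{2\pi ik\omega}-\lambda|^{-2}$; and then --- this is the key missing idea --- $F(\delta)$ is bounded by $\pi^2\Gamma(2\tau+1)/\bigl(3\nu^2(1+\lambda)^2(2\delta)^{2\tau}(2\pi)^{2\tau}\bigr)$ using R\"ussmann's counting of small divisors (if $Z_k$ and $Z_{k'}$ are both small then $Z_{k-k'}$ is small, so by \equ{DC} the indices with small divisors are sparse, gaining a full factor of $N$ in $\sum_{k\le N}Z_k^{-2}$ over the termwise bound). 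Replacing $Z_k^{-2}$ termwise by $\nu^{-2}k^{2\tau}$ inside $F(\delta)$ would again give only $\delta^{-2\tau-1}$, hence $\delta^{-\tau-1/2}$ after the square root. Without R\"ussmann's counting argument (or an equivalent device) your proof cannot reach the exponent $\delta^{-\tau}$, let alone the constant \equ{Cu}.
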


The proof of Lemma~\ref{neutral} with the constant $C_0$ as in \equ{Cu} is
given in Appendix~\ref{app:lemmaproof}.

\subsection{Conformally symplectic systems}
\label{sec:conformallysymplectic}
In this Section we give the definition of conformally symplectic systems for one--dimensional maps.
Indeed, the dissipative standard map that we will introduce in Section~\ref{sec:model} and that we will consider
throughout this paper, is a one--dimensional, conformally symplectic map.
A more general definition of a conformally symplectic system in the $n$--dimensional case is provided in \cite{CallejaCL11}.\\

\begin{definition}\label{def:cs}
Let $\M$ be an analytic symplectic manifold with $\M\equiv B\times\torus$, where $B\subseteq\real$ is an
open, simply connected domain with a smooth boundary. Let $\Omega$ be the symplectic form associated to $\M$.
Let $f$ be a diffeomorphism defined on the phase space $\M$.
The diffeomorphism $f$ is conformally symplectic, if there exists a function $\lambda:\M\rightarrow \real$ such that
$$
f^*\Omega = \lambda\Omega\ ,
$$
where $f^*$ denotes the pull-back of $f$.
\end{definition}

We remark that
when $n=1$, then $\lambda$ can be a function of the coordinates, while it can be shown that for $n\geq 2$ one
can only have that $\lambda$ is a constant function.

In the following discussion, we will always assume that $\lambda$ is a constant, as in the model
\equ{dsm} below, which is the main goal of the present work.

\subsection{A specific model}
\label{sec:model}

In this work we consider a specific 1--parameter family $f_\mu$ of
one--dimensional, conformally symplectic maps, known as the \sl dissipative standard map: \rm
\beqa{dsm}
I'&=&\lambda I+\mu+{\ep \over 2 \pi}\sin(2 \pi \varphi)\ ,\nonumber\\
\varphi'&=&\varphi+I'\ ,
\eeqa
where $I\in B\subseteq\real$ with $B$ as in Definition~\ref{def:cs}, $\varphi\in\torus$, $\varepsilon\in\real_+$, $\lambda\in\real_+$, $\mu\in\real$.
This model has been studied both numerically and theoretically in the
literature. For example \cite{Rand92a, Rand92b, Rand92c} consider
the breakdown and conjecture universality properties;
\cite{CallejaC10} studies the breakdown even for complex values of the parameters;
\cite{CallejaF11} studies the invariant bundles near the circles
and find scaling properties at breakdown; \cite{BustamanteC19, CalCelLla17} study the domains
of analyticity in the limit of small dissipation.

To fix some terminology, we shall refer to $\varepsilon$ as the \sl perturbing parameter, \rm to $\lambda$ as
the \sl dissipative parameter, \rm and to $\mu$ as the \sl drift parameter. \rm

Notice that the Jacobian of the mapping \equ{dsm} is equal to $\lambda$, so that the mapping is contractive for $\lambda<1$,
expanding for $\lambda>1$ and it is symplectic for $\lambda=1$.

We denote by $(\cdot,\cdot)$ the Euclidean scalar product. We remark that if $J=J(x)$ is the matrix representing
$\Omega$ at $x$, namely $\Omega_x(u,v)=(u,J(x)v)$ for any $u$, $v\in\real$, then for the mapping \equ{dsm},
$J$ is the following constant matrix:
\beq{matrixJ}
J=\left(
\begin{array}{cc}
  0 & 1 \\
  -1 & 0 \\
\end{array}
\right)\ .
\eeq

\subsubsection{Formulation of the problem of an invariant attractor}
We proceed to provide the definition of a KAM attractor with frequency $\omega$.

Having fixed a value of the dissipative parameter,
our goal will be to prove the persistence of invariant attractors associated to \equ{dsm} for non-zero
values of the perturbing parameter.
To this end, we need to require that the frequency of the attractor,
say $\omega\in\real$, is Diophantine according to
Definition~\equ{DC}.  We note that this will require
adjusting  the drift parameter $\mu$.

\begin{definition}\label{def:inv}
Given a family of conformally symplectic maps $f_\mu:\M\rightarrow\M$,
a KAM attractor with frequency $\omega$ is an invariant torus which can be described
by an embedding $K:\torus\rightarrow\M$, such that the following invariance equation is satisfied for all $\theta\in\torus:$
\beq{invariance}
f_\mu\circ K(\theta)=K(\theta+\omega)\ .
\eeq
\end{definition}

The equation \eqref{invariance} will be the key of our statements.
Note that we will think that both the embedding $K$ and the parameter
$\mu$ are unknowns of \eqref{invariance}.

\begin{remark}\label{rem3}
$(i)$ For the dissipative standard map \equ{dsm} the embedding $K$ can be conveniently written as
\beq{uv}
K(\theta)=\left(
\begin{array}{c}
  \theta+u(\theta) \\
  v(\theta)\\
\end{array}
\right)
\eeq
for some continuous, periodic functions $u:\torus\rightarrow\real$, $v:\torus\rightarrow\real$.
Denoting by $(I_j,\varphi_j)$ the $j$-th iterate of \equ{dsm}, one finds that orbits are characterized by
$$
\varphi_{j+1}-(1+\lambda)\varphi_j+\lambda\, \varphi_{j-1}=\mu +
{\varepsilon \over 2 \pi}\sin(2 \pi \varphi_j)\ .
$$
Using \equ{uv} one obtains that the invariance equation \equ{invariance} in terms of $u$ is
\beq{u}
u(\theta+\omega)-(1+\lambda)u(\theta)+\lambda\, u(\theta-\omega)=\mu+
{\varepsilon \over 2 \pi} \sin(2 \pi (\theta+u(\theta)))\ .
\eeq
Equation \equ{u} can be used to determine the function $u$ and then one can
determine the function $v$ appearing in \equ{uv} by
$$
v(\theta)=\omega+u(\theta)-u(\theta-\omega)\ .
$$

$(ii)$ It is interesting to notice that for $\varepsilon=0$ the embedding can be chosen as $K(\theta)=(\theta,\omega)$.
In this case, the mapping \equ{dsm} admits a natural attractor with frequency $\omega=\mu/(1-\lambda)$.
This simple observation highlights the role of the drift $\mu$ and its relation to the
frequency $\omega$.
\end{remark}

The existence of an invariant attractor for $\varepsilon\not=0$ will be established by
fixing the frequency $\omega$ and determining
a solution $(K,\mu)$ (equivalently $(u,\mu)$ according to Remark~\ref{rem3}), satisfying the invariance
equation \equ{invariance} (equivalently \equ{u}) for a fixed value of the
dissipative parameter $\lambda$.
The focus of this paper will be in giving explicit estimates and showing
that the hypotheses of the theorem are satisfied numerically in a concrete example
for explicit values of $\varepsilon$, $\lambda$.
In particular, we will verify numerically that the
estimates of the theorem are satisfied taking a numerically
computed solution as the approximate solution. The computation of the solution is described in
Section~\ref{sec:approximate}.
The  verification of the estimates on
these numerical solutions is presented in Section~\ref{sec:value}.

\section{A KAM Theorem}\label{sec:mainthm}
In this Section we state the main mathematical
result, Theorem~\ref{main}, which is a KAM result in the a-posteriori format described
in Theorem Format~\ref{aposteriori}. Theorem~\ref{main} specifies
some condition numbers to be measured in the approximate solution.  It
shows that, if there is a function $K_0$ and a number $\mu_0$ that,
when substituted in  \eqref{invariance}, give a residual
(measured in a norm that we specify) which is smaller than
a function of the condition numbers, then, there is a solution of
\eqref{invariance} close (in some norm that we specify) to $K_0, \mu_0$.

We also note that the method of proof, which is based on constructing an
iterative procedure, leads to a very efficient algorithm.  Later,
we will describe the implementation of the algorithm and the verification of
the estimates required in Theorem~\ref{main}.

For an embedding $K_0=K_0(\theta)$ and a frequency $\omega$, we start by introducing some auxiliary quantities defined as follows:
\beqa{def}
M_0(\theta) &\equiv& [ DK_0(\theta)\ |\  J^{-1}\circ K_0(\theta)\ DK_0(\theta) N_0(\theta)]\ ,\nonumber\\
S_0(\theta) & \equiv& ((DK_0 N_0)\circ T_\omega)^\top(\theta) Df_{\mu_0} \circ K_0(\theta) J^{-1}\circ K_0(\theta)DK_0(\theta)N_0(\theta)\ ,\nonumber\\
N_0(\theta) &\equiv& (DK_0(\theta)^\top DK_0(\theta))^{-1}\ ,
\eeqa
where the superscript $\top$ denotes the transposition
and $T_\omega$ denotes the shift by $\omega$: for a function $P=P(\theta)$, then $(P\circ T_\omega)(\theta)=P(\theta+\omega)$.

The following Theorem provides a constructive version of Theorem 20 in \cite{CallejaCL11} for
mappings as those introduced in Definition~\ref{def:cs}; in particular, it applies to the
dissipative standard map \equ{dsm}.

It is quite important that the condition numbers in Theorem~\ref{main} are properties of
the approximate solution, not global properties of the map. The condition numbers can be
computed from the approximate solution by taking derivatives, performing algebraic operations and averaging.

\begin{theorem}\label{main}
Consider a family $f_\mu:\M\rightarrow\M$ of conformally symplectic mappings, defined on the manifold $\M\equiv B\times \torus$ with
$B\subseteq\real$ an open, simply connected domain with a smooth boundary.
Let the mappings $f_\mu$ be analytic on an open connected
domain $\mathcal{C}\subset\complex\times\complex/\integer$.
Let the following assumptions be satisfied.

{\bf H1} Let $\omega\in \D(\nu,\tau)$ as in \equ{DC}.

{\bf H2} There exists an approximate solution $(K_0,\mu_0)$ with $K_0\in\mathcal{A}_{\rho_0}$ for
some $\rho_0>0$ and with $\mu_0\in\Lambda$, $\Lambda\subset\real$ open. Let $(K_0,\mu_0)$ be
such that \eqref{invariance} is satisfied up to an error function $E_0=E_0(\theta)$, namely
$$
f_{\mu_0}\circ K_0(\theta)-K_0(\theta+\omega)=E_0(\theta)\ .
$$
Let $\varepsilon_0$ denote the size of the error function, i.e.
$$
\varepsilon_0\equiv \|E_0\|_{\rho_0}\ .
$$

{\bf H3} Assume that the following non--degeneracy condition holds:
$$
\det
\left(
\begin{array}{cc}
  {\overline S}_0 & {\overline {S_0(B_{b0})^0}}+\overline{\widetilde A_0^{(1)}} \\
  \lambda-1 & \overline{\widetilde A_0^{(2)}} \\
 \end{array}%
\right) \ne 0\ ,
$$
where $S_0$ is given in \eqref{def},
$\widetilde A_0^{(1)}$, $\widetilde A_0^{(2)}$ denote the first and second elements of the vector
$\widetilde A_0\equiv M_0^{-1}\circ T_{\omega} D_\mu f_{\mu_0} \circ K_0$,
$(B_{b0})^0$ is the solution (with zero average in the $\lambda = 1$ case) of the equation
$\lambda (B_{b0})^0-(B_{b0})^0\circ T_\omega=-(\widetilde A_0^{(2)})^0$, where $(\widetilde A_0^{(2)})^0$
denotes the zero average part of $\widetilde A_0^{(2)}$.
Denote by ${\mathcal T}_0$ the \emph{twist constant} defined as
$${\mathcal T}_0  \equiv \left \|\left(%
\begin{array}{cc}
  {\overline S}_0 & {\overline {S_0(B_{b0})^0}}+\overline{\widetilde A_0^{(1)}} \\
  \lambda-1 & \overline{\widetilde A_0^{(2)}} \\
 \end{array}%
\right)^{-1} \right \|\ .
$$

{\bf H4} Assume there exists $\zeta>0$, so that
$$
{\rm dist}( \mu_0, \partial \Lambda) \ge  \zeta\ ,\qquad
{\rm dist}(K_0(\torus_{\rho_0}), \partial\mathcal{C})  \ge \zeta\ .
$$

{\bf H5} Let $0<\delta_0<\rho_0$.
Let $\kappa_\mu\equiv 4 C_{\sigma 0}$ with
$C_{\sigma 0}$ constant (whose explicit expression is given in Appendix~\ref{app:constants}).
Let the quantities $Q_0$, $Q_{\mu 0}$, $Q_{z\mu 0}$, $Q_{\mu\mu 0}$, $Q_{E 0}$ be defined as
\beqa{QQQ}
Q_0&\equiv&\sup_{z\in\mathcal{C}}|Df_{\mu_0}(z)|\ ,\nonumber\\
Q_{\mu 0}&\equiv&\sup_{z\in\mathcal{C}}|D_\mu f_{\mu_0}(z)|\ ,\nonumber\\
Q_{z\mu 0}&\equiv&\sup_{z\in\mathcal{C},\mu\in\Lambda,|\mu-\mu_0|<2\kappa_\mu\varepsilon_0}|D_\mu Df_{\mu}(z)|\ ,\nonumber\\
Q_{\mu\mu 0}&\equiv&\sup_{z\in\mathcal{C},\mu\in\Lambda,|\mu-\mu_0|<2\kappa_\mu\varepsilon_0}|D^2_\mu f_{\mu}(z)|\ ,\nonumber\\
Q_{E 0}&\equiv&{1\over 2}\max\Big\{\|D^2E_0\|_{{\rho_0-\delta_0}},\|DD_\mu E_0\|_{{\rho_0-\delta_0}},
\|D^2_\mu E_0\|_{{\rho_0-\delta_0}}\Big\}\ .
\eeqa
Assume that $\varepsilon_0$ satisfies the following smallness conditions for suitable
real constants $C_{\eta 0}$, $C_{\E 0}$, $C_{d0}$, $C_{\sigma 0}$, $C_\sigma$,
$C_{W0}$, $C_W$, $C_{\mathcal{R}}$
(see Appendix~\ref{app:constants} for their explicit expressions):
\beq{C1}
C_{\eta 0}\,\nu^{-1}\delta_0^{-\tau}\varepsilon_0<\zeta\ ,
\eeq
\beq{C2}
2^{3\tau+4}\,C_{\E 0}\ \nu^{-2}\ \delta_0^{-2\tau} \varepsilon_0\leq 1\ ,
\eeq
\beq{C3}
4C_{d0} \nu^{-1}\delta_0^{-\tau} \varepsilon_0<\zeta\ ,
\eeq
\beq{C4}
4C_{\sigma 0}\varepsilon_0<\zeta\ ,
\eeq
\beq{condbT}
\|N_0\|_{\rho_0}\ (2\|DK_0\|_{\rho_0}+D_K)\ D_K<1
\eeq
\beq{Cnew1}
4 Q_{z\mu 0} C_{\sigma 0} \varepsilon_0< Q_0\ ,
\eeq
\beq{Cnew2}
4 Q_{\mu\mu 0} C_{\sigma 0} \varepsilon_0< Q_{\mu 0}\ ,
\eeq
\beq{C8}
C_\sigma\ D_K\leq C_{\sigma 0}\ ,
\eeq
\beq{C9}
D_K(C_{W0}+\|M_0\|_{\rho_0}C_W+C_W D_K)\leq C_{d0}\ ,
\eeq
\beq{C10}
D_K\ \Big(C_W\ C_c\ \nu \delta_0^{-1+\tau}+C_{\mathcal{R}}\Big)
  \leq C_{\E 0}\ ,
\eeq
where $D_K$ is defined as
\beq{DK1}
D_K\equiv 4C_{d0}\ C_c\ \nu^{-1}\delta_0^{-\tau-1}\ \varepsilon_0
\eeq
and with $C_c$ as in \equ{cc}.

\vskip.1in

Then, there exists an exact solution $(K_e,\mu_e)$ of \equ{invariance} such that
$$
f_{\mu_e} \circ K_e - K_e \circ T_\omega = 0\ .
$$
The quantities $K_e$, $\mu_e$ are close to the approximate solution, since one has
\beqa{kemu}
|| K_e - K_0 ||_{\rho_0-\delta_0} &\le& 4 C_{d0}\, \nu^{-1}\, \delta_0^{-\tau}\,  ||E_0||_{\rho_0}\ ,\nonumber\\
| \mu_e - \mu_0| &\le& 4 C_{\sigma 0}\, ||E_0||_{\rho_0}\ .
\eeqa
\end{theorem}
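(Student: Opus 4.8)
The plan is to follow the Nash--Moser iterative scheme of \cite{CallejaCL11}, specialized to the one--dimensional dissipative standard map, keeping every constant explicit. The core object is the linearized invariance equation: differentiating $f_\mu\circ K(\theta)-K(\theta+\omega)=E(\theta)$ formally with respect to the unknowns $(K,\mu)$, one seeks a correction $(\Delta,\sigma)$ with $\Delta\in\A_{\rho}$ and $\sigma\in\real$ solving
\[
Df_{\mu}\circ K(\theta)\,\Delta(\theta)-\Delta(\theta+\omega)+D_\mu f_{\mu}\circ K(\theta)\,\sigma=-E(\theta)\ .
\]
The key structural fact, inherited from the conformally symplectic geometry, is the existence of the frame $M_0(\theta)=[DK_0\ |\ J^{-1}\circ K_0\,DK_0 N_0]$ in which $Df_{\mu_0}\circ K_0$ becomes approximately upper triangular: the first column is mapped to the shifted first column (automatic differentiation of the invariance equation), and the diagonal block in the second slot is multiplication by $\lambda$ (conformal symplecticity). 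Writing $\Delta=M_0\circ T_\omega\,W$ and projecting onto the two components of the frame therefore reduces the linearized equation to two scalar cohomology equations of the form \equ{difference}: one with $\lambda=1$ (solved via Lemma~\ref{neutral}, using H1 to control small divisors, and requiring a solvability condition that is absorbed by the choice of $\sigma$) and one with the given dissipative $\lambda\neq1$ (solved via Lemma~\ref{contractive}). The non--degeneracy assumption H3 is exactly what guarantees that the $2\times2$ system determining $\sigma$ together with the averages of $W$ is invertible, with the inverse bounded by the twist constant $\mathcal{T}_0$; the auxiliary function $(B_{b0})^0$ and the vector $\widetilde A_0$ appearing in H3 arise precisely from the bookkeeping of these averages.

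After establishing the approximate invertibility of the linearized operator, the next step is the iteration. Given $(K_n,\mu_n)$ with error $E_n$ of size $\varepsilon_n$ on the domain of analyticity $\rho_n$, I would: (i) build $M_n$, $N_n$, $S_n$, $\widetilde A_n$, and solve the two cohomology equations on a slightly shrunk domain $\rho_n-\delta_n$, using Lemma~\ref{neutral} and Lemma~\ref{contractive}, and Cauchy estimates (Lemma~\ref{lem:Cauchy}) for the derivatives entering the coefficients; (ii) obtain the correction $(\Delta_n,\sigma_n)$ with $\|\Delta_n\|_{\rho_n-\delta_n}\lesssim \nu^{-1}\delta_n^{-\tau}\varepsilon_n$ and $|\sigma_n|\lesssim\varepsilon_n$, which is the origin of the constants $C_{d0}$ and $C_{\sigma0}$ in \equ{kemu}; (iii) set $K_{n+1}=K_n+\Delta_n$, $\mu_{n+1}=\mu_n+\sigma_n$, and estimate the new error $E_{n+1}=f_{\mu_{n+1}}\circ K_{n+1}-K_{n+1}\circ T_\omega$ using the quadratic remainder bounds from Lemma~\ref{lem:comp} together with the Taylor expansion in $\mu$ controlled by the quantities $Q_0,Q_{\mu0},Q_{z\mu0},Q_{\mu\mu0},Q_{E0}$ of H5. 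The upshot is the quadratic estimate $\varepsilon_{n+1}\le C\,\nu^{-2}\delta_n^{-2\tau}\varepsilon_n^2$, where $C$ is assembled from $C_{\E0}$ and the various constants tied together by \equ{C8}--\equ{C10}; condition \equ{C2} is the seed of the smallness needed to start.

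The convergence of the scheme then follows the standard Nash--Moser pattern: choosing $\delta_n=\delta_0\,2^{-n}$ so that $\sum_n\delta_n=\delta_0$ and $\rho_\infty=\rho_0-\delta_0>0$, the quadratic recursion yields $\varepsilon_n\to0$ super--exponentially provided \equ{C2} holds, and the telescoping sum of the corrections gives $\|K_e-K_0\|_{\rho_0-\delta_0}\le 4C_{d0}\nu^{-1}\delta_0^{-\tau}\varepsilon_0$ and $|\mu_e-\mu_0|\le 4C_{\sigma0}\varepsilon_0$, which are exactly \equ{kemu}; the factor $4$ is the geometric-series constant $\sum 2^{-n}$-type bound that also forces the definition \equ{DK1} of $D_K$. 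Conditions \equ{C1}, \equ{C3}, \equ{C4}, and \equ{Cnew1}--\equ{Cnew2} are there to keep the whole orbit of iterates inside the domains $\mathcal{C}$ and $\Lambda$ (via H4) and to keep $Df_{\mu_n}$ and $D_\mu f_{\mu_n}$ uniformly nondegenerate along the iteration, so that the constants do not deteriorate; \equ{condbT} guarantees that $DK_n^\top DK_n$ stays invertible so that $N_n$, and hence $M_n^{-1}$, remain bounded. Finally, analyticity of the limit $K_e$ on $\torus_{\rho_0-\delta_0}$ follows from uniform convergence of analytic functions, and $(K_e,\mu_e)$ solves \equ{invariance} exactly. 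The main obstacle — and the real content of this paper relative to \cite{CallejaCL11} — is Part B): tracking the cancellations carefully enough that the explicit constants $C_{\eta0},C_{\E0},C_{d0},C_{\sigma0},C_\sigma,C_{W0},C_W,C_{\mathcal{R}}$ (collected in Appendix~\ref{app:constants}) are sharp enough that conditions \equ{C1}--\equ{C10} can actually be verified numerically near the conjectured breakdown threshold; replacing the non-constructive invertibility arguments of the original proof by the constructive, average-based criterion H3 is the key technical device enabling this.
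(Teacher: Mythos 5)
Your proposal follows essentially the same route as the paper: the frame $M_0$ reducing the linearized equation to the two cohomology equations \equ{ceq}, the $2\times 2$ averaged system controlled by H3 and $\mathcal{T}_0$, the quadratic error estimate, and the Nash--Moser iteration with shrinking domains and telescoping sums yielding \equ{kemu} (the paper's Proposition~\ref{pro:p1p2p3} carries out the induction $(p1;h)$--$(p3;h)$ that you correctly identify as the main bookkeeping burden). The only cosmetic discrepancies are the convention $\Delta=M_0 W$ rather than $M_0\circ T_\omega\,W$ and the paper's choice $\delta_h=\rho_0 2^{-(h+2)}$ for the domain losses; neither affects the argument.
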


The explicit expressions of the constants entering in the conditions \equ{C1}-\equ{C10}
are obtained by implementing constructively the KAM proof presented in \cite{CallejaCL11} and
sketched in Section~\ref{sec:sketch}. In Section~\ref{sec:results} the family $f_\mu$ will be taken as
the dissipative standard map defined in \equ{dsm}; then, the explicit expressions for the constants
- provided in Appendix~\ref{app:constants} - will allow us to compute concrete values for $\varepsilon_0$,
once we fix the frequency $\omega$ and the conformal factor $\lambda$. Therefore,
the conditions \equ{C1}-\equ{C10} will allow us to obtain a lower bound on the perturbing parameter,
ensuring the existence of an invariant attractor with fixed frequency $\omega$ and for a given conformal factor $\lambda$.

\begin{remark}
  For any value of $\lambda$ with $|\lambda|<1$, Theorem~\ref{main} also
  ensures that the quasi--periodic solution provided
  by the manifold $K_e(\torus^n)$ is a local attractor and that the dynamics
  on this attractor is analytically conjugated to a rigid rotation.
Indeed, according to \cite{CallejaCL11b}, the diagrams in a neighborhood is analytically conjugated to
a rotation and homothety.
\end{remark}

\begin{remark}
One question that has been posed to us several times is how it is possible
to use the computer to verify hypotheses that involve irrational
numbers and indeed the Diophantine properties. After all,
the standard computer numbers are only rational numbers.

The answer is that the a-posteriori theorem uses
the Diophantine properties and that this theorem is indeed given a traditional
proof. To verify the hypothesis, we compute numerically
$\| f_\mu \circ K - K \circ T_{\omega_0}\| $ where $\omega_0$ is
indeed a rational number.

It is clear that for $\xi\in(\omega_0,\omega)$:
\[
\begin{split}
\| f_\mu \circ K - K \circ T_\omega\|
& \le \| f_\mu \circ K - K \circ T_{\omega_0}\| +
\| K \circ T_{\omega_0}   - K \circ T_\omega \| \\
&\le
\| f_\mu \circ K - K \circ T_{\omega_0}\| +
\|D K \circ T_\xi \| | \omega - \omega_0|\ .
\end{split}
\]
In our case, we see that $|\omega - \omega_0| \le 10^{-100}$ and that $DK$ is
a number of order $1$. Hence, the last term does not
affect the final result much.

Of course, implementing interval arithmetic, one can also
use an interval that contains the desired frequency and
obtain estimates for the error of invariance valid uniformly
for all $\omega$ in this interval.
\end{remark}

\section{Sketch of the proof of Theorem~\ref{main}}\label{sec:sketch}

We note that in the statement of Theorem~\ref{main} (and in the subsequent text) all the
constants are given explicitely (see appendix~\ref{app:constants}). There are only a few dozen of conditions
to check; all these conditions are easy algebraic expressions. Even if this is cumbersome for a human,
a computer calculates them in a very short time.

The proof of Theorem~\ref{main} is presented in detail in \cite{CallejaCL11}.
However, in \cite{CallejaCL11} the proof was given for a general case and no explicit estimates
on the constants were provided. On the contrary, in Section~\ref{sec:proof} we give concrete expressions in view
of the application to \equ{dsm}.

Before providing the lengthy and detailed proof given in Section~\ref{sec:proof}, we proceed to outline a
sketch of the proof of Theorem~\ref{main} by splitting it in 5 main steps, which are
needed to be performed in order to get the solution of the invariance equation.
The constructive version of the proof, which will be developed in Section~\ref{sec:proof}, yields explicit expressions for
the constants appearing in the smallness conditions \equ{C1}-\equ{C10};
being a long list, such constants are given in Appendix~\ref{app:constants}.

\vskip.1in

We anticipate that it is easy to see that in the one--dimensional case of the mapping \equ{dsm} all invariant
curves are Lagrangian; this observation will simplify the proof presented in Section~\ref{sec:proof}
with respect to that developed in \cite{CallejaCL11}.
However, in the general $n$-dimensional case the following remark gives a practical characterization of the Lagrangian
character of invariant tori.
\begin{remark}
Let $f$ be a conformally symplectic map defined on an $n$-dimensional manifold $B_n\times\torus^n$, where $B_n\subseteq\real^n$
is an open, simply connected domain with smooth boundary. Then,
invariant tori are \rm Lagrangian, \sl
namely if $|\lambda| \ne1$ and $K$ satisfies \eqref{invariance},
then one has
\begin{equation}
K^* \Omega  =0\ .
\label{lagrangian}
\end{equation}
Moreover, if $f$ is symplectic and $\omega$ is irrational, then the
$n$-dimensional torus is Lagrangian.
\end{remark}

Below it is a description of the main steps required to prove Theorem~\ref{main}.

\vskip.2in

\bf Step 1: \rm on the initial approximate solution and its linearization.

Let $(K_0,\mu_0)$ be an approximate solution of the invariance equation \equ{invariance}
and let $E_0=E_0(\theta)$ be the associated error function.
In coordinates, the Lagrangian condition \equ{lagrangian} becomes
$$
DK_0^\top(\theta)\ J\circ K_0(\theta)\ DK_0(\theta)=0\ ,
$$
which shows that the tangent space can be decomposed as
$\Range \Big(DK_0(\theta)\Big) \oplus \Range \Big(J^{-1}\circ K_0(\theta) DK_0(\theta)\Big)$.

Therefore, one can show that, up to a remainder function $R_0=R_0(\theta)$,
the quantity $Df_{\mu_0}\circ K_0$ is conjugate to an upper diagonal matrix with constant
diagonals and the following identity is satisfied:
\beq{R}
Df_{\mu_0} \circ K_0(\theta)\ M_0(\theta) = M_0(\theta +\omega)
\begin{pmatrix}\Id & S_0(\theta)\\ 0&\lambda \Id \end{pmatrix} +R_0(\theta)
\eeq
with $M_0$ and $S_0$ as in \equ{def}.


Then, we proceed to find some corrections $W_0$ and $\sigma_0$ such that, setting $K_1=K_0+M_0 W_0$, $\mu_1=\mu_0+\sigma_0$, one has that
the new approximation $(K_1,\mu_1)$ satisfies the following invariance equation:
\beq{E1}
f_{\mu_1}\circ K_1(\theta)-K_1(\theta+\omega) =E_1(\theta)
\eeq
for some error function $E_1=E_1(\theta)$. The requirement on $E_1$ is that its norm is
quadratically smaller than the norm of
the initial approximation $E_0$. This can be obtained provided that the
following equation is satisfied:
\beq{quad}
Df_{\mu_0} \circ K_0(\theta)\ M_0(\theta)W_0(\theta) - M_0(\theta+\omega)\ W_0(\theta+\omega) + D_\mu  f_{\mu_0} \circ
K_0(\theta)\sigma_0 = - E_0(\theta)\ .
\eeq

\vskip.2in

\bf Step 2: \rm determination of the new approximation.

The corrections $(W_0,\sigma_0)$ in Step 1 are determined as follows.
Using \equ{R}, \equ{quad} and neglecting higher order terms, one obtains two cohomology equations
with constant coefficients for $W_0$ and $\sigma_0$. More precisely, writing $W_0$ in components as $W_0=(W_0^{(1)},W_0^{(2)})$,
such cohomological equations are given by
\beqa{ceq}
W_0^{(1)}(\theta)-W_0^{(1)}(\theta+\omega) &=& -\widetilde E_0^{(1)}(\theta)-S_0(\theta) W_0^{(2)}(\theta)
-\widetilde A_0^{(1)}(\theta)\, \sigma_0\ ,\nonumber\\
\lambda W_0^{(2)}(\theta)-W_0^{(2)}(\theta+\omega) &=& -\widetilde E_0^{(2)}(\theta)-\widetilde A_0^{(2)}(\theta)\,\sigma_0
\eeqa
with $S_0$ given in \equ{def}, while $\widetilde E_0$, $\widetilde A_0$ are defined as
\beqa{EA}
\widetilde E_0&\equiv&(\widetilde E_0^{(1)},\widetilde E_0^{(2)})\equiv M_0^{-1}\circ T_\omega E_0\ ,\nonumber\\
\widetilde A_0&\equiv&M_0^{-1}\circ T_\omega\ D_\mu f_{\mu_0}\circ K_0\ ,
\eeqa
where we denote by $\widetilde A_0^{(1)}$, $\widetilde A_0^{(2)}$ the first and second elements of the vector
$\widetilde A_0$.

We remark that the first equation in \equ{ceq} involves small divisors. In fact, the
Fourier expansion of the l.h.s. of the first equation in \equ{ceq} is given by
$$
W_0^{(1)}(\theta)-W_0^{(1)}(\theta+\omega) =\sum_{k\in\integer} \widehat W_{0,k}^{(1)}\,
e^{2\pi ik\theta}(1-e^{2\pi ik \omega})\ .
$$
Then, we notice that for $k=0$ there appears the zero factor $1-e^{2\pi ik\omega}=0$.
On the other hand, the second equation in \equ{ceq} is always solvable for any $|\lambda|\not=1$ by a contraction mapping argument.

Let us split $W_0^{(2)}$ as $W_0^{(2)}=\overline{W}_0^{(2)}+(W_0^{(2)})^0$, where the first
term denotes the average of $W_0^{(2)}$ and the second term the zero--average part.
We remark that the average of $W_0^{(1)}$ can be set to zero without loss of generality.
On the other hand, computing the averages of the cohomological equations \equ{ceq}, one can determine
$\overline{W}_0^{(2)}$, $\sigma_0$ by solving the system of equations
\beq{average}
\left(
\begin{array}{cc}
  \overline{S_0} & \overline{S_0(B_{b0})^0} +\overline{\widetilde A_0^{(1)}}  \\
  \lambda-1 & \overline{\widetilde A_0^{(2)}}  \\
 \end{array}%
\right)
\left(
\begin{array}{cc}
\overline{W}_0^{(2)}  \\
\sigma_0\\
 \end{array}%
\right)=
\left(
\begin{array}{cc}
-\overline{S_0 (B_{a0})^0}- \overline{\widetilde E_0^{(1)}} \\
-\overline{\widetilde E_0^{(2)}} \\
 \end{array}%
\right)\ ,
\eeq
where we have split $(W_0^{(2)})^0$ as $(W_0^{(2)})^0=(B_{a0})^0+\sigma_0 (B_{b0})^0$, where $(B_{a0})^0$, $(B_{b0})^0$
are the zero average solutions of
\beqa{BB}
\lambda(B_{a0})^0-(B_{a0})^0\circ T_\omega&=&-(\widetilde E_0^{(2)})^0\ ,\nonumber\\
\lambda(B_{b0})^0-(B_{b0})^0\circ T_\omega&=&-(\widetilde A_0^{(2)})^0
\eeqa
with $(\widetilde E_0^{(2)})^0$, $(\widetilde A_0^{(2)})^0$ denoting the zero average parts of $\widetilde E_0^{(2)}$, $\widetilde A_0^{(2)}$.
After solving \equ{average}, one can proceed to solve \equ{ceq} for the zero average parts of $W_0^{(1)}$, $W_0^{(2)}$.


\vskip.2in

\bf Step 3: \rm on the quadratic convergence of the iterative step.

Once we have determined the correction $(W_0,\sigma_0)$ as in step 2, we show that the new solution $K_1=K_0+M_0 W_0$, $\mu_1=\mu_0+\sigma_0$
satisfies the invariance equation with an error quadratically smaller with respect to the error at the previous step.
Precisely, for $0<\delta_0<\rho_0$ one
can prove that the error term $E_1$ in \equ{E1} associated to $(K_1,\mu_1)$ satisfies an inequality of the form
$$
\|E_1\|_{\rho_0-\delta_0}\leq C'\nu^{-1}\delta_0^{-\tau}\|E_0\|_{\rho_0}^2\ ,
$$
for some constant $C'>0$ (of course, in this constructive version of the Theorem we will assume
explicit values of the new error).


\vskip.2in

\bf Step 4: \rm on the analytic convergence of the sequence of approximate solutions.

The procedure outlined in steps 1--3 can be iterated to get a sequence of approximate solutions, say $\{K_j,\mu_j\}$
with $j\geq 0$.
The convergence of the sequence of approximate solutions to the true solution of the invariance
equation \equ{invariance} is obtained through
an analytic smoothing, which provides the convergence of the
iterative step to the exact solution. It is worth noticing that the sequence of approximate
solutions is constructed in domains that are smaller than the original domains.
However, one can suitably choose the loss of analyticity domains, so that
the exact solution is defined in a non-empty domain.


\vskip.1in

\bf Step 5: \rm on the local uniqueness of the solution.

According to \cite{CallejaCL11}, if there exist two solutions $(K_a,\mu_a)$, $(K_b,\mu_b)$ close enough,
then there exists $s\in{\mathbb R}$ such that for all $\theta\in\torus$:
\beqano
K_b(\theta)&=&K_a(\theta+s)\ ,\nonumber\\
\mu_a&=&\mu_b\ .
\eeqano
We refer to \cite{CallejaCL11} for the proof of the uniqueness of the solution.

\section{A constructive version of the proof of Theorem~\ref{main}}\label{sec:proof}
In this Section we prove Theorem~\ref{main}, providing explicit
expressions for all the constants involved. Such quantities will depend on the norm of the mapping,
the initial parameters and the norm of the initial approximation.

According to the sketch of the proof given in Section~\ref{sec:sketch}, we proceed to compute
the estimates of the following quantities:

\begin{itemize}
\item[$(i)$] estimate of the remainder $R_0=R_0(\theta)$ appearing in \equ{R} (see Section~\ref{sec:lagr}
and compare with step 1 of Section~\ref{sec:sketch});

\item[$(ii)$] estimates for the corrections $(W_0,\sigma_0)$ defined as the solutions of the cohomological
equations \equ{ceq} (see Section~\ref{sec:increments} and compare with step 2 of Section~\ref{sec:sketch});

\item[$(iii)$] quadratic estimates on the convergence of the iterative step, which implies to bound
the norm of $D\E\ M_0 W_0+D_\mu\E\ \sigma_0+E_0$, where $\E$ is the error functional
$\E[K,\mu]\equiv f_\mu\circ K-K\circ T_\omega$ (see Section~\ref{sec:iterative});

\item[$(iv)$] thanks to the results in $(iii)$, we will be able to give quadratic estimates of the error associated to
the new solution, say $\E[K+\Delta,\mu+\sigma]$, in terms of the square of the norm of $E_0$
(see Section~\ref{sec:error} and compare with \equ{E1} and step 3 of Section~\ref{sec:sketch});

\item[$(v)$] proof of the analytic convergence of the sequence of approximate solutions to the true solution of the
invariance equation (see Section~\ref{sec:analytic} and compare with step 4 of Section~\ref{sec:sketch}).
\end{itemize}

\subsection{Estimate on the error $R_0$}\label{sec:lagr}
Any torus associated to a one--dimensional map is always Lagrangian; this leads to a
simplification of the expression of the remainder function in \equ{R}. In fact, recalling
the definition of $M_0$ in \equ{def}, it turns out that
\beq{VEAR}
R_0(\theta)=DE_0(\theta)\ .
\eeq

Using Cauchy estimates, a bound on $R_0$ in \equ{VEAR} is given by
\beq{R0}
\|R_0\|_{\rho_0 - \delta_0}\leq C_c\ \delta_0^{-1}\ \|E_0\|_{\rho_0}
\eeq
with $C_c$ as in \equ{cc}.

\subsection{Estimates for the increment in the steps}\label{sec:increments}
We proceed to give estimates for the corrections $W_0$ and
$\sigma_0$, which satisfy the equations \equ{ceq}.

\begin{lemma}\label{lem:stepestimates}
Let $K_0\in \A_{\rho_0+\delta_0}$, $K_0(\torus_{\rho_0}) \subset \text{\rm
domain }(f_\mu)$, $\dist(K_0(\torus_{\rho_0}), \partial(\text{\rm
domain}(f_\mu))) \geq \zeta>0$ with $\rho_0$, $\delta_0$, $\zeta$ as in Theorem~\ref{main}.
For any $|\lambda|\not=1$ we have
\beqano
\|W_0\|_{{\rho_0-\delta_0}} &\le&C_{W0}\, \nu^{-1}\, \delta_0^{-\tau}\|E_0\|_{\rho_0}\ ,\nonumber\\
|\sigma_0| &\le& C_{\sigma 0}\, \|E_0\|_{\rho_0}\ ,
\eeqano
where
\beqa{NU}
C_{\sigma 0}&\equiv&\mathcal{T}_0\ \Big[|\lambda-1|\
\Big({1\over {||\lambda|-1|}}\|S_0\|_{\rho_0}+1\Big)+\|S_0\|_{\rho_0}\Big]\|M_0^{-1}\|_{\rho_0}\ ,\nonumber\\
\overline{C}_{W_2 0} &\equiv& 2\, \mathcal{T}_0\
\Big({1\over {||\lambda|-1|}}\|S_0\|_{\rho_0}+1\Big)\ Q_{\mu 0}\ \|M_0^{-1}\|^2_{\rho_0}\ ,\nonumber\\
C_{W_2 0}&\equiv& {1\over {||\lambda|-1|}}\Big(1+C_{\sigma 0}\ Q_{\mu 0}\Big)\|M_0^{-1}\|_{\rho_0}\ ,\nonumber\\
C_{W_1 0}&\equiv&C_0\Big[\|S_0\|_{\rho_0} (C_{W_2 0}+\overline{C}_{W_2 0})+\|M_0^{-1}\|_{\rho_0}+Q_{\mu 0} \|M_0^{-1}\|_{\rho_0}C_{\sigma 0}\Big]\ ,\nonumber\\
C_{W0}&\equiv& C_{W_1 0}+(C_{W_2 0}+\overline{C}_{W_2 0})\nu\delta_0^{\tau}\ .
\eeqa
\end{lemma}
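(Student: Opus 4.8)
The plan is to carry out the construction of Step~2 of Section~\ref{sec:sketch} while keeping every constant explicit, so that $(W_0,\sigma_0)$ gets assembled out of the pieces $(B_{a0})^0$, $(B_{b0})^0$, $\overline{W}_0^{(2)}$, $\sigma_0$, $(W_0^{(1)})^0$ of \equ{ceq}--\equ{average}, each estimated by the lemmas of Section~\ref{sec:lemmas}. First I would record the purely algebraic bounds: from the definitions \equ{EA}, the matrix/vector norm inequalities of Section~\ref{sec:norms}, the composition estimate of Lemma~\ref{lem:comp} and the Cauchy estimate of Lemma~\ref{lem:Cauchy} (the latter being what makes $DK_0$, $N_0$, $M_0$, $M_0^{-1}$, $S_0$ finite on $\torus_{\rho_0}$ starting from $K_0\in\A_{\rho_0+\delta_0}$), one gets $\|\widetilde E_0\|_{\rho_0}\le\|M_0^{-1}\|_{\rho_0}\|E_0\|_{\rho_0}$ and $\|\widetilde A_0\|_{\rho_0}\le\|M_0^{-1}\|_{\rho_0}Q_{\mu 0}$, together with the same bounds for the components $\widetilde E_0^{(i)}$, $\widetilde A_0^{(i)}$ and for their zero-average parts (the latter costing at most a harmless numerical factor in the sup norm on $\torus_{\rho_0}$).

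Next, since $|\lambda|\ne1$, I would solve the two equations \equ{BB} for $(B_{a0})^0$ and $(B_{b0})^0$ by the contraction estimate of Lemma~\ref{contractive}; this brings in the factor $\big|\,|\lambda|-1\big|^{-1}$ with no loss of analyticity domain, so that $\|(B_{a0})^0\|_{\rho_0}\le\big|\,|\lambda|-1\big|^{-1}\|(\widetilde E_0^{(2)})^0\|_{\rho_0}$ and $\|(B_{b0})^0\|_{\rho_0}\le\big|\,|\lambda|-1\big|^{-1}\|(\widetilde A_0^{(2)})^0\|_{\rho_0}$. Then comes the averaging step: the pair $(\overline{W}_0^{(2)},\sigma_0)$ solves the linear $2\times2$ system \equ{average}, whose coefficient matrix is invertible with the operator norm of its inverse equal to the twist constant $\mathcal{T}_0<\infty$ by hypothesis \textbf{H3}; hence $|\overline{W}_0^{(2)}|+|\sigma_0|\le\mathcal{T}_0$ times the norm of the right-hand side of \equ{average}, and bounding that right-hand side with the estimates of the first two steps gives $|\sigma_0|\le C_{\sigma 0}\|E_0\|_{\rho_0}$ with $C_{\sigma 0}$ exactly as in \equ{NU}, plus the companion bound on $\overline{W}_0^{(2)}$. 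Reassembling $W_0^{(2)}=\overline{W}_0^{(2)}+(B_{a0})^0+\sigma_0(B_{b0})^0$ and feeding in $|\sigma_0|\le C_{\sigma 0}\|E_0\|_{\rho_0}$ produces $\|W_0^{(2)}\|_{\rho_0}\le(C_{W_2 0}+\overline{C}_{W_2 0})\|E_0\|_{\rho_0}$, still without loss of domain, where the split into $C_{W_2 0}$ (zero-average part) and $\overline{C}_{W_2 0}$ (average part) is the origin of the two summands appearing in \equ{NU}.

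For the small-divisor step I would note that \equ{average} is precisely the zero-average solvability condition for the first equation of \equ{ceq}: with $(\overline{W}_0^{(2)},\sigma_0)$ chosen as above, the right-hand side $-\widetilde E_0^{(1)}-S_0W_0^{(2)}-\widetilde A_0^{(1)}\sigma_0$ has vanishing average, so, since $\omega\in\D(\nu,\tau)$ by \textbf{H1}, Lemma~\ref{neutral} applies with its constant $C_0$ and yields the zero-average solution $(W_0^{(1)})^0$ (and one may fix $\overline{W_0^{(1)}}=0$) with $\|W_0^{(1)}\|_{\rho_0-\delta_0}\le C_0\nu^{-1}\delta_0^{-\tau}\bigl\|\widetilde E_0^{(1)}+S_0W_0^{(2)}+\widetilde A_0^{(1)}\sigma_0\bigr\|_{\rho_0}\le C_{W_1 0}\nu^{-1}\delta_0^{-\tau}\|E_0\|_{\rho_0}$, where $C_{W_1 0}$ collects $C_0$ together with the constants of the first three steps as in \equ{NU}. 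Finally I would add the two contributions, $\|W_0\|_{\rho_0-\delta_0}=\|W_0^{(1)}\|_{\rho_0-\delta_0}+\|W_0^{(2)}\|_{\rho_0-\delta_0}$, and, since the $W_0^{(2)}$ bound carries no $\delta_0$ factor, rewrite it as $(C_{W_2 0}+\overline{C}_{W_2 0})\nu\delta_0^{\tau}\cdot\nu^{-1}\delta_0^{-\tau}\|E_0\|_{\rho_0}$, reaching the stated estimate with $C_{W0}=C_{W_1 0}+(C_{W_2 0}+\overline{C}_{W_2 0})\nu\delta_0^{\tau}$.

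The part I expect to be the main obstacle is the bookkeeping of the averaging step: one has to check carefully that \equ{average} really is the solvability condition for the first equation of \equ{ceq}, to keep track of the zero-average projections and of which quantities are measured on $\torus_{\rho_0}$ versus $\torus_{\rho_0-\delta_0}$, and to organize the algebra so that the constants come out exactly in the compact form \equ{NU} without any superfluous loss. Everything else is a careful but routine propagation of the norm inequalities collected in Section~\ref{sec:lemmas}.
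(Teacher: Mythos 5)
Your proposal is correct and follows essentially the same route as the paper: the same decomposition of $W_0$ into $(W_0^{(1)})^0$, $\overline{W}_0^{(2)}$, $(B_{a0})^0+\sigma_0(B_{b0})^0$, the same use of Lemma~\ref{contractive} for the dissipative cohomology equations, of the $2\times2$ averaged system \equ{average} with the twist constant $\mathcal{T}_0$ for $(\overline{W}_0^{(2)},\sigma_0)$, and of Lemma~\ref{neutral} for the small-divisor component, assembled into \equ{NU} exactly as you describe. The only cosmetic difference is that the paper bounds $|\sigma_0|$ and $|\overline{W}_0^{(2)}|$ separately through the explicit adjugate entries of the $2\times2$ matrix (which is where the weights $|\lambda-1|$ and $\|S_0\|_{\rho_0}$ in $C_{\sigma 0}$ come from) rather than a single operator-norm estimate, a bookkeeping point you already flagged.
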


\begin{proof}
Let $Q_{\mu 0}$ be an upper bound on the norm of $D_\mu f_{\mu_0}$ as in \equ{QQQ}.
Let $\widetilde A_0$ be defined as in \equ{EA}; then, we have:
$$
\|\widetilde A_0\|_{\rho_0} \leq Q_{\mu 0} \|M_0^{-1}\|_{\rho_0}\ .
$$
Recalling the definition of $S_0$ in \equ{def}, we obtain
$$
\|S_0\|_{{\rho_0}} \leq J_e \ Q_0\ \|DK_0\|_{\rho_0}^2 \|N_0\|_{\rho_0}^2\leq
C_c^2 J_e\ \ Q_0\ \|K_0\|_{{\rho_0+\delta_0}}^2 \|N_0\|_{\rho_0}^2\ \delta_0^{-2}\ ,
$$
where we used the estimate $\|DK_0\|_{\rho_0}\leq C_c \|K_0\|_{{\rho_0+\delta_0}}\ \delta_0^{-1}$ and where
$J_e$ denotes the norm of the symplectic matrix $J$ in \equ{matrixJ} (the norm of
$J^{-1}$ is again bounded by $J_e$). Notice that, with the choice of the norms
in Section~\ref{sec:norms} it is $J_e=1$.
We notice that, recalling the definition of $S_0$ and $M_0$ in \equ{def},
one can compute directly the functions and evaluate their norm.

For any $|\lambda|\not=1$, we have the
estimates given below, which follow from \equ{EA}, \equ{average}, \equ{BB}:
\beqano |{\overline W_0^{(2)}}| &\leq& {\mathcal T_0}\Big(\|{\overline{S_0(B_{b0})^0}}+
\overline{\widetilde A_0^{(1)}}\|_{\rho_0}\,
\|\overline{\widetilde E_0^{(2)}}\|_{\rho_0}+
\|{\overline{S_0(B_{a0})^0}}+\overline{\widetilde E_0^{(1)}}\|_{\rho_0}\,
\|\overline{\widetilde A_0^{(2)}}\|_{\rho_0}\Big)\nonumber\\
&\leq&{\mathcal T}_0\Big[\Big({1\over {||\lambda|-1|}}\|S_0\|_{\rho_0}+1\Big)\,
Q_{\mu 0}\|M_0^{-1}\|_{\rho_0}^2\, \|E_0\|_{\rho_0}\nonumber\\
&+&\Big({1\over {||\lambda|-1|}}\|S_0\|_{\rho_0}+1\Big)\, \|M_0^{-1}\|_{\rho_0}^2\
\|E_0\|_{\rho_0} Q_{\mu 0}\Big]\nonumber\\
&\leq&2{\mathcal T}_0\Big({1\over {||\lambda|-1|}}\,
\|S_0\|_{\rho_0}+1\Big)\, Q_{\mu 0}\ \|M_0^{-1}\|_{\rho_0}^2\|E_0\|_{\rho_0}\nonumber\\
&\equiv&\overline{C}_{W_2 0}\ \|E_0\|_{\rho_0}\ ,\nonumber\\
|\sigma_0| &\leq& {\mathcal T}_0\Big(|\lambda-1|\,\|{\overline{S_0(B_{a0})^0}}+\overline{\widetilde E_0^{(1)}}\|_{\rho_0}+\|S_0\|_{\rho_0}\,
\|\overline{\widetilde E_0^{(2)}}\|_{\rho_0}\Big)\nonumber\\
&\leq&{\mathcal T}_0\Big[|\lambda-1|\ \Big({1\over {||\lambda|-1|}}\|S_0\|_{\rho_0}+1\Big)+
\|S_0\|_{\rho_0}\Big]\|M_0^{-1}\|_{\rho_0}\|E_0\|_{\rho_0}\nonumber\\
&\equiv&C_{\sigma 0}\ \|E_0\|_{\rho_0}
\eeqano
with $\overline{C}_{W_2 0}$, $C_{\sigma 0}$ as in \equ{NU}. Then, using
Lemma~\ref{contractive} and Lemma~\ref{neutral}, we have:
\begin{equation}
\label{Wnonuniformanalytic}
\begin{split}
    \|(W_0^{(2)})^0\|_{\rho_0} &\leq  \frac{1}{\big| |\lambda| -1\big|}
    \left(\|M_0^{-1}\|_{\rho_0} \|E_0\|_{\rho_0}+Q_{\mu 0}\|M_0^{-1}\|_{\rho_0}\ |\sigma_0|\right)\leq C_{W_2 0}\ \|E_0\|_{\rho_0}\ ,\nonumber\\
    \|W_0^{(1)}\|_{\rho_0-\delta_0} &\leq
    C_0 \nu^{-1} \delta_0^{-\tau} \Big( \|S_0\|_{\rho_0}\|W_0^{(2)}\|_{\rho_0}+
    \|M_0^{-1}\|_{\rho_0}\|E_0\|_{\rho_0}+Q_{\mu 0}\|M_0^{-1}\|_{\rho_0}|\sigma_0|\Big)\nonumber\\
    &\leq C_{W_1 0}\ \nu^{-1}\delta_0^{-\tau}\|E_0\|_{\rho_0}
\end{split}
\end{equation}
with $C_{W_1 0}$, $C_{W_2 0}$ as in \equ{NU}. In conclusion, we obtain:
\beqano
\|W_0\|_{\rho_0-\delta_0} &\leq&\|W_0^{(1)}\|_{\rho_0-\delta_0}+\|W_0^{(2)}\|_{\rho_0}\leq
C_{W_1 0}\nu^{-1}\delta_0^{-\tau}\|E_0\|_{\rho_0}+(C_{W_2 0}+\overline{C}_{W_2 0})\|E_0\|_{\rho_0}\nonumber\\
&\equiv&C_{W0}\ \nu^{-1}\delta_0^{-\tau}\|E_0\|_{\rho_0}\ ,
\eeqano
with $C_{W0}$ as in \equ{NU}.
\end{proof}

\begin{remark}
Let us define the error functional
$$
  \E[K_0,\mu_0] \equiv f_{\mu_0} \circ K_0 - K_0 \circ T_\omega\ .
$$
Let
$$
(\Delta_0, \sigma_0) = -\eta[K_0, \mu_0] E_0\ ,
$$
where $\Delta_0=-(\eta[K_0, \mu_0] E_0)_1$, $\sigma_0=-(\eta[K_0, \mu_0] E_0)_2$.
Then, using that $\Delta_0=M_0W_0$, one has:
\beqano
\|\eta[K_0, \mu_0] E_0\|_{\rho_0-\delta_0}&\leq&\|M_0\|_{\rho_0}\|W_0\|_{\rho_0-\delta_0}
+|\sigma_0|\nonumber\\
&\leq&C_{\eta 0}\nu^{-1}\delta_0^{-\tau}\|E_0\|_{\rho_0}\ ,
\eeqano
where
\beq{Ceta}
C_{\eta 0}\equiv C_{W0}\|M_0\|_{\rho_0}+C_{\sigma 0}\nu\delta_0^\tau\ .
\eeq
\end{remark}

\subsection{Estimates for the convergence of the iterative step}\label{sec:iterative}
In this Section we give quadratic estimates on the norm of $D\E[K_0,\mu_0] \Delta_0 + D_\mu \E[K_0,\mu_0] \sigma_0 +E_0$
with $\Delta_0\equiv M_0 W_0$; these estimates are needed to bound the error of the new approximate solution as
it will be done in Section~\ref{sec:error}.

\begin{lemma}\label{lem:quadestimate}
We have the following estimate:
\beq{EE}
\|E_0+D\E[K_0,\mu_0] \Delta_0 + D_\mu \E[K_0,\mu_0]
\sigma_0\|_{{\rho_0-\delta_0}} \le C_c\ C_{W0}\,
\nu^{-1}\, \delta_0^{-1-\tau} \|E_0\|^2_{\rho_0}\ .
\eeq
\end{lemma}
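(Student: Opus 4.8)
The plan is to show that the vector whose norm appears on the left-hand side of \equ{EE} is \emph{exactly} equal to $R_0 W_0$, and then to bound this product by the Cauchy estimate \equ{R0} on $R_0$ together with the estimate on $W_0$ from Lemma~\ref{lem:stepestimates}.

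First I would recall that, by the construction in Step~2, the corrections $(W_0,\sigma_0)$ are the solutions of the cohomology system \equ{ceq}. Written in matrix form, with $W_0=(W_0^{(1)},W_0^{(2)})^\top$ and with $\widetilde E_0 = M_0^{-1}\circ T_\omega\,E_0$, $\widetilde A_0 = M_0^{-1}\circ T_\omega\,(D_\mu f_{\mu_0}\circ K_0)$ as in \equ{EA}, this system reads
\[
\begin{pmatrix}\Id & S_0\\ 0 & \lambda\,\Id\end{pmatrix} W_0 - W_0\circ T_\omega + \widetilde A_0\,\sigma_0 = -\widetilde E_0\ .
\]
Multiplying on the left by $M_0\circ T_\omega$ and substituting the conjugation identity \equ{R}, the product of $M_0\circ T_\omega$ with the upper triangular matrix becomes $(Df_{\mu_0}\circ K_0)\,M_0 - R_0$, and one arrives at
\[
(Df_{\mu_0}\circ K_0)(M_0 W_0) - (M_0 W_0)\circ T_\omega + (D_\mu f_{\mu_0}\circ K_0)\,\sigma_0 = -E_0 + R_0 W_0\ .
\]
Since $\Delta_0 = M_0 W_0$, and since $D\E[K_0,\mu_0]\Delta_0 = (Df_{\mu_0}\circ K_0)\,\Delta_0 - \Delta_0\circ T_\omega$ while $D_\mu\E[K_0,\mu_0]\sigma_0 = (D_\mu f_{\mu_0}\circ K_0)\,\sigma_0$ (the term $K_0\circ T_\omega$ being independent of $\mu$), the last display rearranges into
\[
E_0 + D\E[K_0,\mu_0]\Delta_0 + D_\mu\E[K_0,\mu_0]\sigma_0 = R_0 W_0\ .
\]
In other words, \equ{quad} holds up to the single term $R_0 W_0$ that was dropped when passing from \equ{quad} to \equ{ceq}.

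With this identity the estimate is routine: by the matrix--vector bound for the norms of Section~\ref{sec:norms}, $\|R_0 W_0\|_{\rho_0-\delta_0}\le \|R_0\|_{\rho_0-\delta_0}\,\|W_0\|_{\rho_0-\delta_0}$; then I would insert $\|R_0\|_{\rho_0-\delta_0}\le C_c\,\delta_0^{-1}\|E_0\|_{\rho_0}$ from \equ{R0} (which rests on the Lagrangian simplification $R_0 = DE_0$, cf.\ \equ{VEAR}, and the Cauchy estimate of Lemma~\ref{lem:Cauchy}) and $\|W_0\|_{\rho_0-\delta_0}\le C_{W0}\,\nu^{-1}\,\delta_0^{-\tau}\|E_0\|_{\rho_0}$ from Lemma~\ref{lem:stepestimates}. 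Their product is exactly $C_c\,C_{W0}\,\nu^{-1}\,\delta_0^{-1-\tau}\|E_0\|_{\rho_0}^2$, which is \equ{EE}.

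There is no real analytic obstacle here; the only thing requiring care is the bookkeeping in the algebraic step, namely checking that the sole term neglected in deriving \equ{ceq} from \equ{quad} through \equ{R} is the product $R_0 W_0$, so that solving \equ{ceq} exactly --- with the average parts fixed via \equ{average} and \equ{BB} --- really leaves the residual $R_0 W_0$ and nothing more. One should also note that the domains are compatible: $R_0$ and $W_0$ are both controlled on $\torus_{\rho_0-\delta_0}$, the former through the loss $\delta_0$ in the Cauchy estimate and the latter through the loss $\delta_0$ in the small-divisor estimate of Lemma~\ref{neutral}, so the product, hence the whole left-hand side of \equ{EE}, is controlled on $\torus_{\rho_0-\delta_0}$ as stated.
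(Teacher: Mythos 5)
Your proof is correct and follows essentially the same route as the paper: identify the left-hand side with $R_0W_0$ and bound it by $\|R_0\|_{\rho_0-\delta_0}\,\|W_0\|_{\rho_0-\delta_0}$ using \equ{R0} and Lemma~\ref{lem:stepestimates}. The only difference is that the paper obtains the identity $E_0+D\E\,\Delta_0+D_\mu\E\,\sigma_0=R_0W_0$ by citing relation $(7.15)$ of \cite{CallejaCL11}, whereas you derive it directly from \equ{ceq} and \equ{R}; your derivation is accurate and makes the argument self-contained.
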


\begin{proof}
Taking into account that $W_0=M_0^{-1}\Delta_0$, from the relation
$(7.15)$ in \cite{CallejaCL11} we have that,
$$
E_0 + D\E[K_0,\mu_0] \Delta_0 + D_\mu \E[K_0,\mu_0] \sigma_0=R_0 W_0\ .
$$
From Lemma~\ref{lem:stepestimates} and \equ{R0}, we obtain that
\beqano
\|E_0+D\E[K_0,\mu_0] \Delta_0 + D_\mu \E[K_0,\mu_0] \sigma_0\|_{\rho_0-\delta_0}
&\leq& \|R_0\|_{\rho_0-\delta_0}\ \|W_0\|_{\rho_0-\delta_0}\nonumber\\
&\leq& C_c C_{W0}\nu^{-1}\delta_0^{-1-\tau}\|E_0\|_{\rho_0}^2\ .
\eeqano
In conclusion, we have \equ{EE}.
\end{proof}

\subsection{Estimates for the error of the new solution}\label{sec:error}
We proceed to bound the error corresponding to the new approximate
solution.

\begin{lemma}\label{lem:composition}
Let $\eta[K_0, \mu_0]$ be as in Lemma \ref{lem:quadestimate} and let
$\zeta >0$ be such that \beqano {\rm dist}( \mu_0, \partial
\Lambda)
&\ge&  \zeta\ , \\ 
{\rm dist}(K_0(\torus_{\rho_0}), \partial\mathcal{C})  &\ge&
 \zeta\ . 
\eeqano
Assume that
\beq{comp}
C_{\eta 0}\ \nu^{-1}\delta_0^{-\tau} \|E_0\|_{\rho_0}  < \zeta < 1
\eeq
with $C_{\eta 0}$ as in \equ{Ceta}.
Then, we obtain the following estimate for the error:
\[\|\E[K_0 + \Delta_0, \mu_0 + \sigma_0]\|_{{\rho_0-\delta_0}} \le C_{\E 0}\,
\nu^{-2}\, \delta_0^{-2\tau} \|E_0\|^2_{\rho_0}\ ,
\]
where
\beq{Ceps'}
C_{\E 0}\equiv C_c\ C_{W0}\nu
\delta_0^{-1+\tau}+C_{\mathcal{R} 0}
\eeq
with
\beq{CR}
C_{\mathcal{R} 0}\equiv
Q_{E 0}(\|M_0\|^2_{\rho_0}C_{W0}^2+C_{\sigma 0}^2\nu^2\delta_0^{2\tau})\ .
\eeq
\end{lemma}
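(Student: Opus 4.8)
The plan is to compare the error $\E[K_0+\Delta_0,\mu_0+\sigma_0]$ of the corrected pair with its first--order Taylor expansion around $(K_0,\mu_0)$, writing
\[
\E[K_0+\Delta_0,\mu_0+\sigma_0]=\big(E_0+D\E[K_0,\mu_0]\,\Delta_0+D_\mu\E[K_0,\mu_0]\,\sigma_0\big)+\mathcal{R}_0\ ,
\]
so that the parenthesis is exactly the quantity already controlled in Lemma~\ref{lem:quadestimate}, while $\mathcal{R}_0$ is the second--order Taylor remainder
\[
\mathcal{R}_0=f_{\mu_0+\sigma_0}\circ(K_0+\Delta_0)-f_{\mu_0}\circ K_0-(Df_{\mu_0}\circ K_0)\,\Delta_0-(D_\mu f_{\mu_0}\circ K_0)\,\sigma_0\ ,
\]
where one uses that the term $K\circ T_\omega$ appearing in $\E$ is linear in $K$ and independent of $\mu$, hence contributes nothing to $\mathcal{R}_0$.

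The first step is to check that this expansion is legitimate on $\torus_{\rho_0-\delta_0}$, i.e.\ that the whole segment joining $(K_0,\mu_0)$ to $(K_0+\Delta_0,\mu_0+\sigma_0)$ stays inside $\Lambda\times\mathcal{C}$ with enough room to invoke the second--order estimate of the Composition Lemma~\ref{lem:comp}. This is precisely what hypothesis \equ{comp} provides: by the Remark following Lemma~\ref{lem:stepestimates} one has $\|\Delta_0\|_{\rho_0-\delta_0}+|\sigma_0|\le\|M_0\|_{\rho_0}\|W_0\|_{\rho_0-\delta_0}+|\sigma_0|\le C_{\eta 0}\,\nu^{-1}\delta_0^{-\tau}\|E_0\|_{\rho_0}<\zeta$, and the distance hypotheses $\dist(\mu_0,\partial\Lambda)\ge\zeta$, $\dist(K_0(\torus_{\rho_0}),\partial\mathcal{C})\ge\zeta$ then keep the segment, and its derivatives up to order two, inside the region where $Q_0,Q_{\mu 0},Q_{z\mu 0},Q_{\mu\mu 0},Q_{E0}$ of \equ{QQQ} are valid bounds (the enlargement $|\mu-\mu_0|<2\kappa_\mu\varepsilon_0$ there, with $\kappa_\mu=4C_{\sigma 0}$, being dictated precisely by $|\sigma_0|\le C_{\sigma 0}\varepsilon_0$). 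The condition $\zeta<1$ is used only cosmetically at the end, when collecting powers of $\nu$ and $\delta_0$.

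Next I estimate the two pieces. For the parenthesis, Lemma~\ref{lem:quadestimate} gives directly $\|E_0+D\E[K_0,\mu_0]\Delta_0+D_\mu\E[K_0,\mu_0]\sigma_0\|_{\rho_0-\delta_0}\le C_c\,C_{W0}\,\nu^{-1}\delta_0^{-1-\tau}\|E_0\|_{\rho_0}^2$, which I rewrite as $\big(C_c\,C_{W0}\,\nu\,\delta_0^{-1+\tau}\big)\,\nu^{-2}\delta_0^{-2\tau}\|E_0\|_{\rho_0}^2$. For $\mathcal{R}_0$ I apply the second--order Composition--Lemma estimate in the two variables $(K,\mu)$: this produces $Q_{E0}$ times a quadratic expression in $\|\Delta_0\|_{\rho_0-\delta_0}$ and $|\sigma_0|$, and inserting the bounds of Lemma~\ref{lem:stepestimates}, namely $\|\Delta_0\|_{\rho_0-\delta_0}\le\|M_0\|_{\rho_0}\|W_0\|_{\rho_0-\delta_0}\le\|M_0\|_{\rho_0}C_{W0}\nu^{-1}\delta_0^{-\tau}\|E_0\|_{\rho_0}$ and $|\sigma_0|\le C_{\sigma 0}\|E_0\|_{\rho_0}$, yields $\|\mathcal{R}_0\|_{\rho_0-\delta_0}\le Q_{E0}\big(\|M_0\|_{\rho_0}^2C_{W0}^2+C_{\sigma 0}^2\nu^2\delta_0^{2\tau}\big)\nu^{-2}\delta_0^{-2\tau}\|E_0\|_{\rho_0}^2=C_{\mathcal{R}0}\,\nu^{-2}\delta_0^{-2\tau}\|E_0\|_{\rho_0}^2$. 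Summing the two contributions gives the coefficient $C_{\E 0}=C_c\,C_{W0}\,\nu\,\delta_0^{-1+\tau}+C_{\mathcal{R}0}$, which is the assertion.

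The part I expect to require the most care is the estimate of $\mathcal{R}_0$: one has to organize the two--variable Taylor remainder of $f_{\mu_0+\sigma_0}\circ(K_0+\Delta_0)$ so that the mixed second--order contributions are absorbed without degrading the precise constant $C_{\mathcal{R}0}$, and to check that the Composition Lemma applies with exactly the distances guaranteed by \equ{comp}. Here the structure of the concrete model keeps the constants tight: for the dissipative standard map \equ{dsm} one has $D_\mu f_\mu\equiv(1,1)$, so the mixed and $\mu$--$\mu$ second derivatives vanish and $\mathcal{R}_0$ reduces to the purely $K$--quadratic term. Everything else is a substitution of the already established bounds from Lemma~\ref{lem:stepestimates}, Lemma~\ref{lem:quadestimate} and the Remark following Lemma~\ref{lem:stepestimates}.
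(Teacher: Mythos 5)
Your proof is correct and follows essentially the same route as the paper: the identical Taylor decomposition into the linearized piece controlled by Lemma~\ref{lem:quadestimate} plus the second-order remainder $\mathcal{R}_0$, bounded via $Q_{E0}$ and the estimates of Lemma~\ref{lem:stepestimates}, with \equ{comp} guaranteeing that the corrected pair stays in the domain. Your extra observation that $D_\mu f_\mu$ is constant for the dissipative standard map (so the mixed and $\mu$--$\mu$ second-order terms vanish) is a helpful justification of the clean form $Q_{E0}(\|\Delta_0\|^2+|\sigma_0|^2)$ that the paper asserts without comment.
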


\begin{proof}
Define the remainder of the Taylor series expansion as
$$
\mathcal{R} [(K_0, \mu_0), (K_0',
\mu_0')]\equiv \E[K_0',\mu_0']-\E[K_0,\mu_0]-D\E[K_0,\mu_0]
(K_0'-K_0)-D_\mu\E[K_0,\mu_0](\mu_0'-\mu_0)\ .
$$
Then, we can write
$$
\E[K_0+\Delta_0, \mu_0+\sigma_0] = E_0+ D \E[K_0, \mu_0] \Delta_0 + D_\mu \E[K_0,
\mu_0]\sigma_0 + \mathcal{R} [(K_0, \mu_0), (K_0+\Delta_0, \mu_0 + \sigma_0)]\ .
$$
From Lemma~\ref{lem:stepestimates} and the definition of $Q_{E 0}$ in \equ{QQQ}, we obtain
\beqano
\|\mathcal{R}\|_{{\rho_0-\delta_0}}\leq Q_{E 0}\
\Big(\|\Delta_0\|^2_{{\rho_0-\delta_0}}+|\sigma_0|^2\Big)
&\leq& Q_{E 0}\ \Big[\|M_0\|^2_{\rho_0}\ (C_{W0}\nu^{-1}\delta_0^{-\tau}\|E_0\|_{\rho_0})^2+(C_{\sigma 0}\|E_0\|_{\rho_0})^2\Big]\nonumber\\
&\equiv&C_{\mathcal{R} 0}
\nu^{-2}\,\delta_0^{-2\tau}\,\|E_0\|_{\rho_0}^2\ ,
\eeqano
with $C_{\mathcal{R} 0}$ as in \equ{CR}. Then, from Lemma~\ref{lem:quadestimate} we conclude that
\beqano
\|\E[K_0+\Delta_0, \mu_0+\sigma_0]\|_{{\rho_0-\delta_0}}&\leq&
C_c\ C_{W0} \nu^{-1}\delta_0^{-1-\tau}\
\|E_0\|_{\rho_0}^2
+C_{\mathcal{R} 0}\nu^{-2}\delta_0^{-2\tau}\ \|E_0\|_{\rho_0}^2\nonumber\\
&\leq& C_{\E 0}\ \nu^{-2}\delta_0^{-2\tau}\
\|E_0\|_{\rho_0}^2
\eeqano
with $C_{\E 0}$ as in \equ{Ceps'}. Notice that \equ{comp} guarantees that
$$
\|\Delta_0\|_{\rho_0-\delta_0}<\zeta\ ,\qquad |\sigma_0|<\zeta\ .
$$
\end{proof}

\subsection{Analytic convergence}\label{sec:analytic}
In this Section we prove that
if we start with a small enough error, it is possible to repeat indefinitely the algoritm
and that iterating the algorithm, we obtain a sequence of approximate
solutions which converge to the true solution of the invariance
equation \equ{invariance}.

Again, let $(K_0,\mu_0)$ be the initial approximate solution
with $K_0\in \A_{\rho_0}$ for some $\rho_0>0$ as in Theorem~\ref{main} and define
the sequence of parameters $\{\delta_h\}$, $\{\rho_h\}$, $h\geq 0$, as
$$
\delta_h\equiv{\rho_0\over 2^{h+2}}\ ,\qquad
\rho_{h+1}\equiv\rho_h-\delta_h\ ,\qquad h\geq 0\ .
$$
With this choice of parameters the domain of analyticity where the true solution is defined will be
a non--empty domain with size $\rho_\infty$ given by
$$
\rho_\infty=\rho_0-\sum_{j=0}^\infty {\rho_0\over
2^{j+2}}=\rho_0-{\rho_0\over 2}>0\ .
$$
Let $(K_h,\mu_h)$, $h\geq 1$, be the approximate solution constructed by
finding at each step the corrections $(W_h,\sigma_h)$ solving the analogous of the
cohomological equations \equ{ceq} for $h=0$. To make the notation precise, all quantities associated to $(K_h,\mu_h)$ will
carry a subindex $h$, indicating the step of the algorithm. Define
$$
\varepsilon_h \equiv \|\E(K_h, \mu_h)\|_{\rho_h}\ ,
$$
and let us introduce the following quantities:
$$
d_h \equiv \|\Delta_h\|_{\rho_h}, \quad v_h \equiv
\|D\Delta_h\|_{\rho_h}, \quad s_h \equiv |\sigma_h|\ .
$$
By Lemma \ref{lem:stepestimates} we have the following
inequalities:
\beqano
d_h &\leq& C_{dh} \nu^{-1} \delta_h^{-\tau}\,\varepsilon_h\ ,\nonumber\\
v_h &\leq& C_{dh} \ C_c\ \nu^{-1} \delta_h^{-\tau-1}\,\varepsilon_h\ ,\nonumber\\
s_h &\leq& C_{\sigma h} \varepsilon_h\ ,
\eeqano
where
\beq{Chath}
C_{dh}\equiv C_{Wh}\|M_h\|_{\rho_h}
\eeq
where the quantities $C_{Wh}$, $C_{\sigma h}$ are obtained through the following expressions:
\beqa{Csigmah}
C_{\sigma h}&\equiv& \mathcal{T}_h\ \Big[|\lambda-1|\Big({1\over {||\lambda|-1|}}\|S_h\|_{\rho_h}+1\Big)
+\|S_h\|_{\rho_h}\Big]\ \|M_h^{-1}\|_{\rho_h}\ ,\nonumber\\
\overline{C}_{W_2h}&\equiv&2\mathcal{T}_h \Big({1\over {||\lambda|-1|}}\|S_h\|_{\rho_h}+1\Big)\, Q_{\mu h}\|M_h^{-1}\|^2_{\rho_h}\ ,\nonumber\\
C_{W_2h}&\equiv&{1\over {||\lambda|-1|}}\Big(1+C_{\sigma h}\ Q_{\mu h}\Big)\|M_h^{-1}\|_{\rho_h}\ ,\nonumber\\
C_{W_1h}&\equiv&C_0\Big[\|S_h\|_{\rho_h}(C_{W_2h}+\overline{C}_{W_2h})+\|M_h^{-1}\|_{\rho_h}+
Q_{\mu h} \|M_h^{-1}\|_{\rho_h}\ C_{\sigma h}\Big]\ ,\nonumber\\
C_{Wh}&\equiv&C_{W_1 h}+(C_{W_2 h}+\overline{C}_{W_2h})\nu\delta_h^{\tau}\ .
\eeqa

\begin{remark}\label{rmk:constants}
By Lemma \ref{lem:composition} one has
$$
\varepsilon_{h+1} \leq  C_{\E h} \nu^{-2}
\delta_{h}^{-2\tau} \varepsilon_{h}^2\ ,
$$
where $C_{\E h}$ is defined as
\beq{Cepsp}
C_{\E h}\equiv C_c\ C_{W h} \nu \delta_{h}^{-1+\tau}+C_{\mathcal{R}h}
\eeq
with
\beq{cr}
C_{\mathcal{R}h}\equiv Q_{Eh}\
(\|M_h\|^2_{\rho_h}C_{Wh}^2+C_{\sigma h}^2\nu^2\delta_h^{2\tau})
\eeq
and
$$
Q_{Eh}\equiv {1\over 2}\max\Big\{\|D^2E_h\|_{{\rho_h-\delta_h}},\|DD_\mu E_h\|_{{\rho_h-\delta_h}},
\|D^2_\mu E_h\|_{{\rho_h-\delta_h}}\Big\}\ .
$$
\end{remark}

The results of Theorem~\ref{main} are based on the following proposition.

\begin{proposition}\label{pro:p1p2p3}
Let the constants
$C_{d0}$, $C_{\sigma 0}$, $C_{\E 0}$ be as in \equ{Chath}, \equ{Csigmah}, \equ{Cepsp} with $h=0$.
Define the following quantities:
\beq{kappa}
\kappa_K\equiv 4 C_{d0}\nu^{-1}\delta_0^{-\tau}\ ,\qquad \kappa_\mu\equiv 4 C_{\sigma 0}\ , \qquad
\kappa_0\equiv 2^{2\tau+1}\ C_{\E 0} \nu^{-2}\delta_0^{-2\tau}\ .
\eeq

Assume that the following conditions are satisfied:
\beq{conda}
2^{\tau+3}\,\kappa_0\varepsilon_0\leq 1\ ,
\eeq
\beq{condc}
\kappa_K \varepsilon_0<\zeta\ ,
\eeq
\beq{condd}
\kappa_\mu\varepsilon_0<\zeta\ ,
\eeq
\beq{condb}
\|N_0\|_{\rho_0}\ (2\|DK_0\|_{\rho_0}+D_K)\ D_K<1
\eeq
\beq{conde}
C_\sigma\ D_K\leq C_{\sigma0}\ ,
\eeq
\beq{condf}
D_K\ \Big(C_W C_c \nu \delta_0^{-1+\tau}+C_{\mathcal{R}}\Big) \leq C_{\E 0}\ ,
\eeq
\beq{condg}
D_K(C_{W0}+\|M_0\|_{\rho_0}C_W+C_WD_K)\leq C_{d0}\ ,
\eeq
\beq{condk}
4 Q_{z\mu 0} C_{\sigma 0}\varepsilon_0<Q_0\ ,
\eeq
\beq{condm}
4 Q_{\mu\mu 0} C_{\sigma 0}\varepsilon_0<Q_{\mu 0}\ ,
\eeq
where the constants $C_\sigma$, $C_W$, $C_{\mathcal{R}}$, $C_{W0}$,
$D_K$ are defined in Appendix~\ref{app:constants}.
Then, for all integers $h\geq 0$ the following inequalities $(p1;h)$, $(p2;h)$, $(p3;h)$ hold:
\begin{itemize}
\item[$(p1;h)$]
\beqa{Kmu}
\|K_h - K_0\|_{\rho_h} &\leq& \kappa_K\, \varepsilon_0<\zeta\ ,\nonumber\\
|\mu_h-\mu_0|&\leq & \kappa_\mu\, \varepsilon_0 < \zeta\ ;
\eeqa
\item[$(p2;h)$]
$$
\varepsilon_h \leq (\kappa_0 \varepsilon_0)^{2^h-1}\varepsilon_0\ ;
$$
\item[$(p3;h)$]
\beqa{p3h}
C_{dh} &\leq& 2 C_{d0}\ ,\nonumber\\
C_{\sigma h}&\leq& 2 C_{\sigma 0}\ ,\nonumber\\
C_{\E h}&\leq& 2 C_{\E 0}\ .
\eeqa
\end{itemize}
\end{proposition}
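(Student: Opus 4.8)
The plan is to prove the three families $(p1;h)$, $(p2;h)$, $(p3;h)$ \emph{simultaneously} by induction on $h$. The base case $h=0$ is immediate: $K_0-K_0=0$ and $\mu_0-\mu_0=0$ give $(p1;0)$, the strict inequalities $0<\zeta$ being nothing but \equ{condc} and \equ{condd}; $(p2;0)$ is the tautology $\varepsilon_0\le\varepsilon_0$; and $(p3;0)$ reads $C_{d0}\le 2C_{d0}$, $C_{\sigma 0}\le 2C_{\sigma 0}$, $C_{\E 0}\le 2C_{\E 0}$. The three statements feed one another and so must be carried together: $(p3;h)$ is what lets one close the quadratic recursion producing $(p2;h+1)$; $(p2;h)$ supplies the super--exponential decay of $\varepsilon_h$ needed to sum the corrections in $(p1;h+1)$ and in $(p3;h+1)$; and $(p1;h)$ guarantees that $K_h$ and $\mu_h$ stay inside the domains where the condition numbers of step $0$ remain meaningful, which is what makes the perturbative estimates of $(p3;h+1)$ legitimate. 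In the inductive step I would therefore assume $(p1;j),(p2;j),(p3;j)$ for all $j\le h$ and establish the three at level $h+1$, in the order $(p2)$, then $(p1)$, then $(p3)$.

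For $(p2;h+1)$ I would iterate the quadratic estimate of Remark~\ref{rmk:constants}, $\varepsilon_{h+1}\le C_{\E h}\,\nu^{-2}\delta_h^{-2\tau}\varepsilon_h^2$: using $(p3;h)$ to replace $C_{\E h}$ by $2C_{\E 0}$ and inserting $\delta_h=\delta_0 2^{-h}$ together with the definition \equ{kappa} of $\kappa_0$, this becomes $\varepsilon_{h+1}\le 2^{2\tau(h-1)}\kappa_0\,\varepsilon_h^2$. The sharp quantity to carry through the induction is $\varepsilon_h\le(\kappa_0\varepsilon_0)^{2^h-1}\varepsilon_0\,2^{-2\tau h}$ — the geometric factor $2^{-2\tau h}$ being precisely what absorbs the $2^{2\tau(h-1)}$ generated at each step — and it reproduces itself as soon as $\kappa_0\varepsilon_0$ is small enough, which is ensured by the smallness condition \equ{conda}. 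Dropping the harmless factor $2^{-2\tau h}$ gives $(p2;h+1)$ as stated. For $(p1;h+1)$ I would telescope $K_{h+1}-K_0=\sum_{j=0}^h\Delta_j$ and $\mu_{h+1}-\mu_0=\sum_{j=0}^h\sigma_j$, bound each term by Lemma~\ref{lem:stepestimates} (applicable since $(p1;j)$ places $K_j$ in the right domain) and $(p3;j)$, and sum using $(p2;j)$: the series converge with their $j=0$ term dominating, and \equ{conda} gives $\|K_{h+1}-K_0\|_{\rho_{h+1}}\le 4C_{d0}\nu^{-1}\delta_0^{-\tau}\varepsilon_0=\kappa_K\varepsilon_0$ and $|\mu_{h+1}-\mu_0|\le 4C_{\sigma 0}\varepsilon_0=\kappa_\mu\varepsilon_0$, the strict inequalities $<\zeta$ being \equ{condc} and \equ{condd}. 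Running the same computation on $D\Delta_j$, with the Cauchy bound \equ{cc}, gives $\|DK_{h+1}-DK_0\|_{\rho_{h+1}}\le 4C_{d0}C_c\nu^{-1}\delta_0^{-\tau-1}\varepsilon_0=D_K$, the quantity \equ{DK1}: this single scalar records the total drift of the tangent data away from step $0$.

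The heart of the argument — and where I expect the real work to lie — is $(p3;h+1)$. One must show that every condition number entering the definitions \equ{Csigmah}, \equ{Cepsp}, \equ{cr} — the norms $\|M_{h+1}\|$, $\|M_{h+1}^{-1}\|$, $\|N_{h+1}\|$, $\|S_{h+1}\|$, the twist constant $\mathcal{T}_{h+1}$, and the bounds $Q_{h+1},Q_{\mu,h+1},Q_{E,h+1}$ — stays within a controlled multiple of its value at $h=0$. Each is an explicit algebraic/composition expression (and, for the twist matrix, also a bounded solution operator of the cohomology equations, which does not change with $h$) built from $DK_{h+1}$, $N_{h+1}=(DK_{h+1}^\top DK_{h+1})^{-1}$, $J^{-1}\circ K_{h+1}$ and $Df_{\mu_{h+1}}\circ K_{h+1}$ and its $\mu$--derivatives, so its drift is governed entirely by the two scalars $\|DK_{h+1}-DK_0\|\le D_K$ and $|\mu_{h+1}-\mu_0|\le\kappa_\mu\varepsilon_0$ from $(p1;h+1)$: I would control $N_{h+1}$ by the Neumann series $(\Id+N_0E)^{-1}N_0$ with $\|E\|\le(2\|DK_0\|_{\rho_0}+D_K)D_K$, convergent by \equ{condb}; the compositions $J^{-1}\circ K_{h+1}$ and $Df_{\mu_{h+1}}\circ K_{h+1}$ by Lemma~\ref{lem:comp}, once $K_{h+1}(\torus_{\rho_{h+1}})$ and $\mu_{h+1}$ are within $\zeta$ of the step--$0$ data, the $\mu$--dependence of $Q_{h+1}$ and $Q_{\mu,h+1}$ being held down by \equ{condk} and \equ{condm}; and then $\|S_{h+1}\|$ and $\mathcal{T}_{h+1}$ from their formulas, the latter because the $2\times 2$ matrix of {\bf H3} is perturbed by an amount controlled by $D_K$ and hence stays invertible with bounded inverse. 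Feeding these bounds into \equ{Csigmah}, \equ{Cepsp}, \equ{cr} shows that $C_{W,h+1}$, $C_{\sigma,h+1}$, $C_{\mathcal{R},h+1}$ differ from their $h=0$ values by at most a Lipschitz constant times $D_K$, the Lipschitz constants being precisely the Appendix~\ref{app:constants} quantities $C_W$, $C_\sigma$, $C_{\mathcal{R}}$; and then the smallness conditions \equ{conde}, \equ{condf}, \equ{condg} are exactly — and only just — what forces $C_{\sigma,h+1}\le 2C_{\sigma 0}$, $C_{\E,h+1}\le 2C_{\E 0}$, and $C_{d,h+1}=C_{W,h+1}\|M_{h+1}\|\le 2C_{d0}$, closing $(p3;h+1)$ and with it the induction. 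The delicate point is entirely this bookkeeping — writing every step--$(h+1)$ condition number as a small perturbation of the step--$0$ one, arranging the estimates so that the single scalar $D_K$ (together with $\kappa_\mu\varepsilon_0$) controls all of them uniformly in $h$, and recognizing \equ{conda}--\equ{condm} as exactly the stability requirement for $(p1;\cdot)$ and $(p3;\cdot)$ under iteration; everything else (Cauchy estimates, Neumann series, summing a super--exponentially convergent series) is routine.
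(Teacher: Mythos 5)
Your proposal is correct and follows essentially the same route as the paper: a simultaneous induction in which $(p2)$ comes from iterating the quadratic Taylor estimate with $C_{\E h}\le 2C_{\E 0}$ (your sharpened invariant with the factor $2^{-2\tau h}$ is exactly the intermediate bound the paper computes before discarding it), $(p1)$ comes from telescoping the corrections and summing via \equ{conda}, and $(p3)$ is the Lipschitz-in-$D_K$ bookkeeping for $N_h$, $M_h$, $M_h^{-1}$, $S_h$, $\mathcal{T}_h$ and the $Q$'s (the paper's Lemmas~\ref{lem:DK} and \ref{lem:DKT}), closed by \equ{conde}--\equ{condg}. The only detail you gloss over that the paper spells out at length is the drift of $Q_{E,h}$, which requires the second-derivative analogue $D_{2K}$ of $D_K$ and the constant $C_Q$; this fits naturally into your scheme and does not change the argument.
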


The proof of Proposition~\ref{pro:p1p2p3} is quite long (see Section~\ref{sec:proofp1p2p3}),
but it is well structured and broken into small steps that can be easily verified.
However, it allows to give the proof of Theorem~\ref{main} by \sl analytic smoothing: \rm at each step, the
corrections $(W_h,\sigma_h)$ allow to construct increasingly approximate solutions, defined on smaller
analyticity domains. The loss of domain is such that the exact solution is defined on a domain
with positive radius of analyticity.

\begin{proof} (of Theorem~\ref{main})
The inequalities \equ{kemu} follow directly from \equ{Kmu} and \equ{kappa}.
The condition \equ{C1} follows from \equ{comp} of Lemma~\ref{lem:composition},
while the conditions \equ{C2}-\equ{C10} follow from \equ{conda}-\equ{condm}
of Proposition~\ref{pro:p1p2p3}.
\end{proof}

\vskip.1in

\section{Proof of Proposition~\ref{pro:p1p2p3}}\label{sec:proofp1p2p3}
The proof of Proposition~\ref{pro:p1p2p3} proceeds by induction.
We start by noticing that $(p1;0)$, $(p2;0)$ and $(p3;0)$ are trivial.
Let $H\in\integer_+$ and assume that $(p1;h)$, $(p2;h)$, $(p3;h)$ are true for $h=1,...,H$.
Then, by Lemma  \ref{lem:composition} we obtain the Taylor
estimate
\beqa{compest}
\varepsilon_{h} &=&
\|\E(K_{h-1}+\Delta_{h-1}, \mu_{h-1}+\sigma_{h-1})\|_{{\rho_{h}}}\nonumber\\
&\leq&  C_{\E,h-1} \nu^{-2} \delta_{h-1}^{-2\tau}
\varepsilon_{h-1}^2\nonumber\\
&\leq& 2 C_{\E 0} \nu^{-2}
\delta_{h-1}^{-2\tau} \varepsilon_{h-1}^2\ ,
\eeqa
where
$$
C_{\E,h-1}\leq 2 C_{\E 0}\ ,
$$
due to $(p3;h)$ for $h=1,...,H$. The estimate \equ{compest} allows to
have a bound of $\varepsilon_h$, $h=1,...,H$, in terms of $\varepsilon_0$:
\beqano
\varepsilon_{h}&\leq &2 C_{\E 0}\ \nu^{-2}\delta_0^{-2\tau}\ 2^{2\tau (h-1)} \varepsilon_{h-1}^2\nonumber\\
&\leq&(2 C_{\E 0} \nu^{-2}\delta_0^{-2\tau})\
2^{2\tau(h-1)}(2 C_{\E 0} \nu^{-2}\delta_0^{-2\tau}
\ 2^{2\tau (h-2)}\varepsilon_{h-2}^2)^2\nonumber\\
&\leq&(2 C_{\E 0}
\nu^{-2}\delta_0^{-2\tau})^{1+2+...+2^{h-1}}\
2^{2\tau((h-1)+2(h-2)+...+2^{h-2})}
\varepsilon_0^{2^h}\nonumber\\
&\leq&(2 C_{\E 0} \nu^{-2}\delta_0^{-2\tau})^{2^h-1}\ 2^{2\tau(2^h-(h+1))}\varepsilon_0^{2^h}\nonumber\\
&\leq& (2 C_{\E 0}
\nu^{-2}\delta_0^{-2\tau}2^{2\tau}\varepsilon_0)^{2^h-1}\varepsilon_0\ .
\eeqano

In Sections~\ref{sec:p1}, \ref{sec:p2}, \ref{sec:p3}, we will prove
$(p1;H+1)$, $(p2;H+1)$ and $(p3;H+1)$, assuming
the induction assumption $(p1;h)$, $(p2;h)$, $(p3;h)$ for $h=1,...,H$.
To get such result, we need the following Lemma, which gives bounds on
the quantities entering the estimates needed to prove the inductive assumption.

\vskip.1in

\begin{lemma}\label{lem:DK}
Assume that $(p1;h)$, $(p2;h)$, $(p3;h)$ hold for $h=1,...,H$.
For $H\in\integer_+$ the following inequality holds:
\beq{DKin}
\|DK_{H+1}-DK_0\|_{\rho_{H+1}}\leq D_K\ ,
\eeq
where (see \equ{DK1})
$$
D_K\equiv 4C_{d0}\ C_c\ \nu^{-1} \delta_0^{-\tau-1}\varepsilon_0\ ,
$$
where $C_{d0}$ is as in \equ{Chath} and provided that
\beq{condDK1}
2^{\tau+1}\kappa_0 \varepsilon_0\leq{1\over 2}
\eeq
with $\kappa_0$ as in \equ{kappa}.
Furthermore, under the inequality:
\beq{condDK2}
\|N_0\|_{\rho_0}\ (2\|DK_0\|_{\rho_0}+D_K)\ D_K < 1\ ,
\eeq
the following relations hold for $0\leq h\leq H+1$:
\beq{auxa}
    \|N_h - N_0\|_{\rho_h} \leq C_N D_K\ ,
\eeq
\beq{auxb}
    \|M_h - M_0\|_{\rho_h} \leq C_M D_K\ ,
\eeq
\beq{auxc}
    \|M^{-1}_h - M^{-1}_0\|_{\rho_h}  \leq C_{Minv} D_K\ ,
\eeq
where $C_N$, $C_M$, $C_{Minv}$ are defined as follows:
\beqa{constants}
C_N&\equiv&\|N_0\|_{\rho_0}^2\ {{2\|DK_0\|_{\rho_0}+D_K}\over {1-\|N_0\|_{\rho_0}D_K(2\|DK_0\|_{\rho_0}+D_K)}}\ ,\nonumber\\
C_M&\equiv& 1 + J_e\Big[C_N \Big(\|DK_0\|_{\rho_0}+D_K\Big)+\|N_0\|_{\rho_0}\Big]\ ,\nonumber\\
C_{Minv}&\equiv& C_{N}(\|DK_0\|_{\rho_0}+D_K)+\|N_0\|_{\rho_0}+J_e\ ,
\eeqa
and where $J_e$ is an upper bound on the norm of the matrix $J$ in \equ{matrixJ}.
\end{lemma}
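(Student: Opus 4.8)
The plan is to bound $\|DK_{H+1}-DK_0\|_{\rho_{H+1}}$ by a telescoping sum over the increments produced at each step of the iteration, and then to propagate the resulting bound on $DK$ through the algebraic formulas defining $N_h$, $M_h$, $M_h^{-1}$ using standard perturbation estimates for matrix inverses. First I would write $DK_{H+1}-DK_0 = \sum_{h=0}^{H} D\Delta_h$, since $K_{h+1}=K_h+\Delta_h$ with $\Delta_h=M_hW_h$. By the inductive hypotheses $(p2;h)$ and $(p3;h)$, together with the bound $v_h\le C_{dh}\,C_c\,\nu^{-1}\delta_h^{-\tau-1}\varepsilon_h$ from Lemma~\ref{lem:stepestimates} (and the choice $\delta_h=\rho_0/2^{h+2}$), each term is controlled by $2C_{d0}\,C_c\,\nu^{-1}(2^{h+2}/\rho_0)^{\tau+1}(\kappa_0\varepsilon_0)^{2^h-1}\varepsilon_0$. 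Summing the geometric-type series, the $h=0$ term dominates once $2^{\tau+1}\kappa_0\varepsilon_0\le 1/2$, which is exactly hypothesis \equ{condDK1}; the tail is then at most a second copy of the leading term, giving the factor $4$ in $D_K = 4C_{d0}\,C_c\,\nu^{-1}\delta_0^{-\tau-1}\varepsilon_0$. Note $\|D\Delta_h\|$ must be measured on $\rho_{H+1}\le\rho_{h+1}=\rho_h-\delta_h$, so the Cauchy-estimate loss $\delta_h$ is precisely what appears; this is a routine bookkeeping check.

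Next, for the estimates \equ{auxa}--\equ{auxc} I would argue by induction on $h$, $0\le h\le H+1$, using the telescoping bound just obtained to control $\|DK_h-DK_0\|_{\rho_h}\le D_K$ at every intermediate step. For \equ{auxa}, write $N_h-N_0 = (DK_h^\top DK_h)^{-1}-(DK_0^\top DK_0)^{-1}$ and use the identity $A^{-1}-B^{-1}=A^{-1}(B-A)B^{-1}$ together with $DK_h^\top DK_h - DK_0^\top DK_0 = (DK_h-DK_0)^\top DK_h + DK_0^\top(DK_h-DK_0)$, whose norm is at most $(2\|DK_0\|_{\rho_0}+D_K)D_K$. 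The Neumann-series bound for $\|N_h\|$ (valid precisely when $\|N_0\|_{\rho_0}(2\|DK_0\|_{\rho_0}+D_K)D_K<1$, i.e. hypothesis \equ{condDK2}) then yields the stated $C_N$. For \equ{auxb}, expand $M_h-M_0$ using the definition of $M_0$ in \equ{def}: the first column contributes $DK_h-DK_0$ (norm $\le D_K$, giving the leading $1$), and the second column $J^{-1}\circ K\,DK\,N$ is estimated by splitting into the three factors, using $\|J^{-1}\|\le J_e$, the bound on $DK_h$ and the bound on $N_h$ just proved. For \equ{auxc}, since $M_h^{-1}$ here can be computed explicitly from the block structure (the tangent/normal splitting), $M_h^{-1}-M_0^{-1}$ is again a sum of terms each involving one factor of $DK_h-DK_0$ or $N_h-N_0$, producing the linear-in-$D_K$ bound with constant $C_{Minv}$.

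The main obstacle is the first step: getting the constant $4$ in $D_K$ honestly, i.e. verifying that under \equ{condDK1} the telescoping series over $h$ is genuinely dominated by twice its leading term. One must track carefully the competition between the growing factor $2^{(h+2)(\tau+1)}$ coming from the shrinking Cauchy radii $\delta_h$ and the super-exponentially decaying factor $(\kappa_0\varepsilon_0)^{2^h-1}$ coming from $(p2;h)$; the condition $2^{\tau+1}\kappa_0\varepsilon_0\le 1/2$ is tuned so that each successive term is at most half the previous one, making the sum at most $2$ times the $h=0$ term. Everything after that is standard matrix-inverse perturbation theory, and the only remaining care needed is consistency of analyticity domains — always restricting the step-$h$ estimates down to $\rho_{H+1}$ — which is harmless since all norms are monotone decreasing in the domain.
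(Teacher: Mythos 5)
Your proposal is correct and follows essentially the same route as the paper: the telescoping sum $\sum_{j=0}^H v_j$ with $v_j\le C_{dj}C_c\nu^{-1}\delta_j^{-\tau-1}\varepsilon_j$, domination of the series under \equ{condDK1} (factor $2$ from the geometric tail times factor $2$ from $C_{dj}\le 2C_{d0}$ giving the $4$ in $D_K$), and then Neumann-series perturbation of $N_h=(DK_h^\top DK_h)^{-1}$ under \equ{condDK2} followed by column-by-column (resp.\ row-by-row) splitting of $M_h-M_0$ and of the explicit block formula for $M_h^{-1}-M_0^{-1}$. Your resolvent identity $A^{-1}-B^{-1}=A^{-1}(B-A)B^{-1}$ is algebraically equivalent to the paper's factorization $N_h=N_0(1+\chi_h)^{-1}$ and yields the same constant $C_N$.
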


\begin{proof}
We start by proving \equ{DKin}:
\beqa{DK}
\|DK_{H+1} - DK_0 \|_{\rho_{H+1}}&\leq& \sum_{j=0}^{H} v_j\nonumber\\
&\leq&
\sum_{j=0}^{H} C_{dj}\ C_c\ \nu^{-1}
\delta_j^{-\tau-1}\varepsilon_j\nonumber\\
&\leq& D_K
\eeqa
with $D_K$ as in \equ{DK1} and provided that \equ{condDK1} holds.

\vskip.1in

The proof of \equ{auxa} is obtained as follows. From the relations
\beqa{ST}
DK_h&=&DK_0+\widetilde K_h\ ,\nonumber\\
DK_h^\top&=&DK_0^\top+\widetilde K_h^\top\ ,\nonumber\\
\widetilde K_h&=&\sum_{j=0}^{h-1} D\Delta_j\ ,
\eeqa
we obtain
\beqano
N_h&=&(DK_h^\top DK_h)^{-1}=\Big((DK_0^\top+\widetilde K_h^\top)(DK_0+\widetilde K_h)\Big)^{-1}\nonumber\\
&=&(DK_0^\top DK_0+\widetilde K_h^\top DK_0+DK_0^\top\widetilde K_h+\widetilde K_h^\top\widetilde K_h)^{-1}\nonumber\\
&=&(DK_0^\top DK_0)^{-1}\Big(1+(DK_0^\top DK_0)^{-1}(\widetilde K_h^\top DK_0+DK_0^\top \widetilde K_h+\widetilde K_h^\top \widetilde K_h)\Big)^{-1}\nonumber\\
&=&N_0(1+\chi_h)^{-1}\ ,
\eeqano
having set $\chi_h\equiv N_0(\widetilde K_h^\top DK_0+DK_0^\top \widetilde K_h+\widetilde K_h^\top \widetilde K_h)$.
Under the inequality \equ{condDK2}, ensuring that $\|\chi_h\|_{\rho_h}<1$ and using \equ{DK}, we have the following bound:
$$
\|(1+\chi_h)^{-1}-1\|\leq {{\|\chi_h\|}\over {1-\|\chi_h\|}}\ ,
$$
which leads to
\beqa{Nh}
\|N_h-N_0\|_{\rho_h}&\leq&\|N_0\|_{\rho_0}\ \|(1+\chi_h)^{-1}-1\|_{\rho_h}\nonumber\\
&\leq&\|N_0\|_{\rho_0}\ {{\|\chi_h\|_{\rho_h}}\over {1-\|\chi_h\|_{\rho_h}}}\nonumber\\
&\leq&C_N\ D_K
\eeqa
with $C_N$ as in \equ{constants}.

\vskip.1in

The proof of \equ{auxb} is obtained starting from the identity
$$
M_h-M_0=(DK_h-DK_0\ |\ J^{-1}\circ K_h\ DK_h\ N_h-J^{-1}\circ K_0\
DK_0\ N_0)\ .
$$
Then, one has
$$
\|M_h-M_0\|_{\rho_h}\leq \|DK_h-DK_0\|_{\rho_h}+ J_e\ \|DK_h N_h-DK_0 N_0\|_{\rho_h}\ .
$$
From
$$
DK_h N_h-DK_0 N_0=DK_h N_h-DK_h N_0+DK_h N_0-DK_0 N_0
$$
and from \equ{Nh}, we obtain that
\beqa{DKN}
\|DK_h N_h-DK_0 N_0\|_{\rho_h}&\leq&\|DK_h\|_{\rho_h}\|N_h-N_0\|_{\rho_h}+\|N_0\|_{\rho_0}\|DK_h-DK_0\|_{\rho_h}\nonumber\\
&\leq&(C_N\|DK_h\|_{\rho_h}+\|N_0\|_{\rho_0})\ D_K\ .
\eeqa
Finally, we have\footnote{Notice that we can bound
$\|DK_h\|_{\rho_h}$ as $\|DK_h\|_{\rho_h}\leq \|DK_0\|_{\rho_0}+\|DK_h-DK_0\|_{\rho_h}$.}
$$
\|M_h-M_0\|_{\rho_h}\leq C_M\ D_K
$$
with $C_M$ as in \equ{constants}.

\vskip.1in

The proof of \equ{auxc} is obtained as follows. We have that the inverse of the
matrix $M_h$ can be written as\footnote{The matrix $M_h^{-1}$ is given by taking
the transpose of the matrix obtained juxtaposing the $2n\times n$ matrices (in the
generic $n$--dimensional case) $DK_h N_h^\top$ and $(J\circ K_h)^\top DK_h$ or,
equivalently, by constructing the matrix whose first $n$ rows are given by the
$n\times 2n$ matrix $N_h DK_h^\top$ and the second $n$ rows are given by the
$n\times 2n$ matrix $DK_h^\top (J\circ K_h)$.}

\beq{MMinv}
M_h^{-1}(\theta) = \Big(DK_h\ N_h^\top |\   (J\circ K_h)^\top\
DK_h\Big)^\top\ = \left(\begin{array}{c}
N_h DK_h^\top \\
DK_h^\top (J\circ K_h)\
 \end{array} \right).\nonumber\\
\eeq
Indeed, one can verify that due to the Lagrangian character, $M_h^{-1} M_h = \Id$.
By computing the inverse of $M_0$ in an analogous way, one has
\beq{MM}
M_h^{-1}-M_0^{-1}= \left(\begin{array}{c}
N_h DK_h^\top - N_0 DK_0^\top \\
DK_h^\top (J\circ K_h)\ - DK_0^\top (J\circ K_0)\
 \end{array} \right)\ .
\eeq
The bound for the first row $N_h DK_h^\top - N_0 DK_0^\top$ is obtained as in \eqref{DKN}, while
the second row is bounded by $J_e\ D_K$. This yields \equ{auxc} with $C_{Minv}$
as in \equ{constants}.

\end{proof}

\vskip.1in

We are now in the position to continue with the proof of $(p1;H+1)$, $(p2;H+1)$, $(p3;H+1)$ to which we devote
the rest of this Section.

\vskip.1in

\subsection{Proof of $(p1;H+1)$}\label{sec:p1}
Using the inequality $j+1\leq 2^j$, one has
\beqano
\|K_{H+1} - K_0\|_{{\rho_{H+1}}} &\leq& \sum_{j=0}^H d_j \leq \sum_{j=0}^H (C_{dj} \nu^{-1} \delta_j^{-\tau}\varepsilon_j)\nonumber \\
& \leq & 4 C_{d0}\nu^{-1} \delta_0^{-\tau}\varepsilon_0\ ,
\eeqano
assuming that $\varepsilon_0$ satisfies \equ{conda}.
In conclusion, we have
$$
\|K_{H+1} - K_0\|_{{\rho_{H+1}}} \leq \kappa_K \varepsilon_0
$$
with $\kappa_K$ as in \equ{kappa}. Moreover, we have:
\beqano
|\mu_{H+1} - \mu_0| &\leq& \sum_{j=0}^H s_j \leq \sum_{j=0}^H C_{\sigma_j} \varepsilon_j \nonumber\\
&\leq &2 C_{\sigma 0}\, \sum_{j=0}^H
(\kappa_0\varepsilon_0)^{2^j-1}\varepsilon_0\ ;
\eeqano
assuming that $\varepsilon_0$ satisfies \equ{conda}, we conclude that
$$
|\mu_{H+1} - \mu_0| \leq
4 C_{\sigma 0}\,\varepsilon_0 = \kappa_\mu\varepsilon_0\ ,
$$
with $\kappa_\mu$ as in \equ{kappa}.
We take $\varepsilon_0$ small enough so that \equ{condc} and \equ{condd} are satisfied, which provide $(p1;H+1)$.

\subsection{Proof of $(p2;H+1)$}\label{sec:p2}
Having proven $(p1;H+1)$,
we use the Taylor estimate \eqref{compest} with $H+1$ in place of
$h$ to obtain $(p2;H+1)$:
$$
\varepsilon_{H+1} \leq (2 C_{\E 0}
\nu^{-2}\delta_0^{-2\tau}2^{2\tau}\varepsilon_0)^{2^{H+1}-1}\varepsilon_0=
(\kappa_0\varepsilon_0)^{2^{H+1}-1}\varepsilon_0\ ,
$$
due to the definition of $\kappa_0$ in \equ{kappa}.

\subsection{Proof of $(p3;H+1)$}\label{sec:p3}
The proof of $(p3;H+1)$ is rather cumbersome and needs several auxiliary results.
Given the inductive assumption, we want to prove that
\beq{ccc}
C_{d,H+1}\leq 2C_{d0}\ ,
\qquad C_{\sigma,H+1}\leq 2 C_{\sigma 0}\ ,\qquad
C_{\E,H+1}\leq 2C_{\E 0}\ .
\eeq
First, we estimate $|{\mathcal T}_h - {\mathcal T}_0|$ as described in Section~\ref{sec:TT}.

\subsubsection{Estimate on $|{\mathcal T}_h - {\mathcal T}_0|$}\label{sec:TT}

Before describing the proof of $(p3;H+1)$ we need the following auxiliary result.

\begin{lemma}\label{lem:DKT}
Assume that $(p1;h)$, $(p2;h)$, $(p3;h)$ hold for $h=1,...,H$ and
that the condition \equ{condDK2} of Lemma~\ref{lem:DK} is valid together with
\beqa{CQ2}
4 Q_{z\mu 0} C_{\sigma 0}\varepsilon_0&<&Q_0\ ,\nonumber\\
4 Q_{\mu\mu 0} C_{\sigma 0}\varepsilon_0&<&Q_{\mu 0}\ .
\eeqa
Let ${\mathcal T}_0$, ${\mathcal T}_h$ be defined as
$$
{\mathcal T}_0\equiv \|\tau_0\|_{\rho_0}\ ,\qquad
{\mathcal T}_h\equiv \|\tau_h\|_{\rho_h}\ .
$$
For $h\in\nat$, $h=1,...,H$, the following inequality holds:
\beq{auxd}
    |{\mathcal T}_h - {\mathcal T}_0| \leq C_T D_K\ ,
\eeq
where $C_T$ is defined as
\beq{CT}
C_T\equiv {{\mathcal{T}_0^2}\over {1-\mathcal{T}_0C_\tau}}\
\max\Big\{C_S,C_{SB}+2C_{Minv} Q_{\mu 0}\Big\}
\eeq
with
\beqano
C_S&\equiv& 2 J_e\ Q_0\
\Big\{\Big(\|N_0\|_{\rho_0}+C_N D_K\Big)\ \Big[D_K
(\|N_0\|_{\rho_0}+C_N D_K)\nonumber\\
&+&\|DK_0\|_{\rho_0}\|N_0\|_{\rho_0}+\|DK_0\|_{\rho_0} C_N D_K\Big]+
C_N\ \|DK_0\|_{\rho_0}\ \Big[D_K (\|N_0\|_{\rho_0}+C_N D_K)\nonumber\\
&+&\|DK_0\|_{\rho_0}\|N_0\|_{\rho_0}+\|DK_0\|_{\rho_0}C_N D_K\Big]+
\|N_0\|_{\rho_0}\|DK_0\|_{\rho_0}(\|N_0\|_{\rho_0}+C_N D_K)\nonumber\\
&+&C_N \|N_0\|_{\rho_0}\|DK_0\|_{\rho_0}^2\Big\}\ ,\nonumber\\
C_{SB}&\equiv& {1\over {||\lambda|-1|}}Q_{\mu 0}\|M_0^{-1}\|_{\rho_0}C_S+
2J_e Q_0\ \|N_0\|_{\rho_0}^2\ \|DK_0\|_{\rho_0}^2{1\over {||\lambda|-1|}}\ C_{Minv}
\ Q_{\mu 0}\nonumber\\
&+&2C_S\ {1\over {||\lambda|-1|}}\ C_{Minv}\ Q_{\mu 0}\ D_K\ ,\nonumber\\
C_\tau&\equiv&\max\Big\{C_S,C_{SB}+2C_{Minv} Q_{\mu 0}\Big\}\ D_K\ .
\eeqano
\end{lemma}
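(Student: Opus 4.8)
The plan is to view $\tau_h=\Theta_h^{-1}$ — where $\Theta_h$ denotes the $2\times2$ twist matrix of hypothesis {\bf H3} with $(K_0,\mu_0)$ replaced by the $h$-th iterate $(K_h,\mu_h)$, so that its entries are $\overline{S_h}$, $\overline{S_h(B_{bh})^0}+\overline{\widetilde A_h^{(1)}}$, $\lambda-1$ and $\overline{\widetilde A_h^{(2)}}$ and ${\mathcal T}_h=\|\tau_h\|_{\rho_h}$ — as a small perturbation of $\tau_0=\Theta_0^{-1}$, and to run the usual resolvent argument. Throughout one works under the hypotheses of the lemma, i.e. $(p1;h),(p2;h),(p3;h)$ for $h=1,\dots,H$, together with \equ{condDK2} and \equ{CQ2}. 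From
\[
\tau_h-\tau_0=-\tau_0\,(\Theta_h-\Theta_0)\,\tau_h
\]
one obtains, as long as ${\mathcal T}_0\,\|\Theta_h-\Theta_0\|_{\rho_h}<1$,
\[
\|\tau_h\|_{\rho_h}\le\frac{{\mathcal T}_0}{1-{\mathcal T}_0\|\Theta_h-\Theta_0\|_{\rho_h}}\ ,\qquad
\|\tau_h-\tau_0\|_{\rho_h}\le\frac{{\mathcal T}_0^2\,\|\Theta_h-\Theta_0\|_{\rho_h}}{1-{\mathcal T}_0\|\Theta_h-\Theta_0\|_{\rho_h}}\ ,
\]
and since $|{\mathcal T}_h-{\mathcal T}_0|\le\|\tau_h-\tau_0\|_{\rho_h}$, the whole statement \equ{auxd} reduces to showing $\|\Theta_h-\Theta_0\|_{\rho_h}\le C_\tau$ with $C_\tau=\max\{C_S,\,C_{SB}+2C_{Minv}Q_{\mu0}\}D_K$; the smallness conditions already in force (in particular \equ{condDK2}, \equ{CQ2} and those of Proposition~\ref{pro:p1p2p3}) keep $D_K$, hence $C_\tau$, small enough that ${\mathcal T}_0 C_\tau<1$ and the bound above is $\le C_T D_K$, with $C_T$ as in \equ{CT}.

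To estimate $\|\Theta_h-\Theta_0\|_{\rho_h}$ I first note the $(2,1)$ entry, equal to $\lambda-1$ in both matrices, drops out, so (after replacing averages by $\|\cdot\|_{\rho_h}$-norms, which only enlarges them) everything reduces to the three function differences $S_h-S_0$, $\widetilde A_h-\widetilde A_0$ and $S_h(B_{bh})^0-S_0(B_{b0})^0$. Each I would treat by telescoping a product of near-equal factors, isolating one difference-factor per summand; the key is that Lemma~\ref{lem:DK} already supplies the elementary ingredients: $\|N_h-N_0\|_{\rho_h}\le C_N D_K$ (\equ{auxa}), $\|M_h^{-1}-M_0^{-1}\|_{\rho_h}\le C_{Minv}D_K$ (\equ{auxc}), $\|DK_h-DK_0\|_{\rho_h}\le D_K$ (\equ{DKin}), hence $\|DK_hN_h-DK_0N_0\|_{\rho_h}\le(C_N\|DK_h\|_{\rho_h}+\|N_0\|_{\rho_0})D_K$ via the step \equ{DKN}. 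Meanwhile the map-factors $Df_{\mu_h}\circ K_h$ and $D_\mu f_{\mu_h}\circ K_h$ stay below $2Q_0$ and $2Q_{\mu0}$ because, by $(p1;h)$, $|\mu_h-\mu_0|\le\kappa_\mu\varepsilon_0$ and \equ{CQ2} bounds the resulting growth (ensuring the iterates remain inside the parameter window over which $Q_{z\mu0},Q_{\mu\mu0}$ of \equ{QQQ} were defined), and the differences $Df_{\mu_h}\circ K_h-Df_{\mu_0}\circ K_0$, $D_\mu f_{\mu_h}\circ K_h-D_\mu f_{\mu_0}\circ K_0$ are themselves $O(D_K)$ since the base point moves by $O(D_K)$ (by \equ{DKin}) and the parameter by $O(\varepsilon_0)$. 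Telescoping $\widetilde A_h=M_h^{-1}\circ T_\omega\,D_\mu f_{\mu_h}\circ K_h$ (see \equ{EA}) gives $\|\widetilde A_h-\widetilde A_0\|_{\rho_h}\le2C_{Minv}Q_{\mu0}D_K$, and for $S_h$ written as in \equ{def} (using that $J$ in \equ{matrixJ} is constant) it gives $\|S_h-S_0\|_{\rho_h}\le C_S D_K$ with $C_S$ as in the statement.

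For the one genuinely new term $S_h(B_{bh})^0-S_0(B_{b0})^0$ I would use that $(B_{bh})^0$, $(B_{b0})^0$ solve $\lambda(B_{bh})^0-(B_{bh})^0\circ T_\omega=-(\widetilde A_h^{(2)})^0$ and its $h=0$ analogue from \equ{BB}; subtracting, their difference solves the same type of equation with right-hand side $-\big((\widetilde A_h^{(2)})^0-(\widetilde A_0^{(2)})^0\big)$, so Lemma~\ref{contractive} (the $|\lambda|\ne1$ case) gives $\|(B_{bh})^0-(B_{b0})^0\|_{\rho_h}\le||\lambda|-1|^{-1}\|\widetilde A_h^{(2)}-\widetilde A_0^{(2)}\|_{\rho_h}$, which is $O(D_K)$ by the previous step, while the same lemma bounds $\|(B_{b0})^0\|_{\rho_0}$ and the proof of Lemma~\ref{lem:stepestimates} bounds $\|S_0\|_{\rho_0}$. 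Writing $S_h(B_{bh})^0-S_0(B_{b0})^0=(S_h-S_0)(B_{bh})^0+S_0\big((B_{bh})^0-(B_{b0})^0\big)$ and collecting yields $\|S_h(B_{bh})^0-S_0(B_{b0})^0\|_{\rho_h}\le C_{SB}D_K$. Assembling the matrix norm \equ{normm}: the first column of $\Theta_h-\Theta_0$ carries only the $(1,1)$ entry, of modulus $\le C_S D_K$; the second carries the $(1,2)$ and $(2,2)$ entries, whose $\widetilde A$-pieces add up to exactly $\|\widetilde A_h-\widetilde A_0\|_{\rho_h}\le2C_{Minv}Q_{\mu0}D_K$ (by the definition \equ{normv}) plus the $S(B_b)$-piece $\le C_{SB}D_K$. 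Hence $\|\Theta_h-\Theta_0\|_{\rho_h}\le\max\{C_S,\,C_{SB}+2C_{Minv}Q_{\mu0}\}D_K=C_\tau$, and feeding this into the first paragraph's bound finishes the proof with $C_T$ as in \equ{CT}.

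The main obstacle is organizational rather than conceptual: one must carry out the product-difference telescoping for $S_h-S_0$ — and then propagate it through the cohomology solution $(B_{bh})^0$ and into the averaged twist matrix — so that each summand retains exactly one explicit factor of $D_K$ while all remaining factors are expressed purely in the reference quantities $\|DK_0\|_{\rho_0}$, $\|N_0\|_{\rho_0}$, $\|M_0^{-1}\|_{\rho_0}$, $C_N$, $C_{Minv}$, $J_e$, $Q_0$, $Q_{\mu0}$, $||\lambda|-1|$, $\lambda$; this is exactly the bookkeeping that produces the long explicit constants $C_S$ and $C_{SB}$ appearing in \equ{CT}. A secondary delicate point is that the uniform bounds $\|Df_{\mu_h}\circ K_h\|_{\rho_h}\le2Q_0$ and $\|D_\mu f_{\mu_h}\circ K_h\|_{\rho_h}\le2Q_{\mu0}$ are not free: they require the conditions \equ{CQ2}, combined with the inductive estimate $|\mu_h-\mu_0|\le\kappa_\mu\varepsilon_0$ from $(p1;h)$, to keep all iterates inside the region where the condition numbers of \equ{QQQ} were defined.
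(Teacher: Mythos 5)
Your proposal is correct and follows essentially the same route as the paper's proof: a resolvent identity for the difference of inverse twist matrices reducing everything to the bound $\|\tau_h^{-1}-\tau_0^{-1}\|\le C_\tau$, entrywise telescoping of $S_h-S_0$ and $\widetilde A_h-\widetilde A_0$ using the increments supplied by Lemma~\ref{lem:DK}, subtraction of the two cohomology equations for $(B_{bh})^0-(B_{b0})^0$, the uniform bounds $2Q_0$, $2Q_{\mu 0}$ obtained from \equ{CQ2} and $(p1;h)$, and the final column-wise assembly of the matrix norm. The constants you identify ($C_S$, $C_{SB}$, $C_\tau$, $C_T$) match those in the paper.
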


\begin{proof}
Let $\tau_0$ and $\tau_h$ be defined as
$$
\tau_0\equiv\left(%
\begin{array}{cc}
  {\overline S}_0 & {\overline {S_0(B_{b0})^0}}+\overline{\widetilde A_0^{(1)}} \\
  \lambda-1 & \overline{\widetilde A_0^{(2)}} \\
 \end{array}%
\right)^{-1}
$$
and
$$\tau_h\equiv
\left(%
\begin{array}{cc}
  {\overline S}_h & {\overline {S_h(B_{bh})^0}}+\overline{\widetilde A_h^{(1)}} \\
  \lambda-1 & \overline{\widetilde A_h^{(2)}}\\
 \end{array}%
\right)^{-1}\ .
$$
Then, we obtain
$$
{\mathcal T}_h \leq \|\tau_0\|_{\rho_0}+\|\widetilde\tau_h\|_{\rho_h}={\mathcal T}_0+\|\widetilde\tau_h\|_{\rho_h}\ ,
$$
where $\widetilde\tau_h\equiv \tau_h-\tau_0=\tau_0^2\
\Big[\Big(I+\tau_0(\tau_h^{-1}-\tau_0^{-1})\Big)^{-1}\ (\tau_0^{-1}-\tau_h^{-1})\Big]$, so that we have the estimate
\beq{T}
|{\mathcal T}_h-{\mathcal T}_0|\leq \|\widetilde\tau_h\|_{\rho_h}
\eeq
with
\beq{tau}
\|\widetilde\tau_h\|_{\rho_h} \leq {{{\mathcal
T}_0^2}\over {1-{\mathcal T}_0 C_\tau}}C_\tau\ ,
\eeq
where $C_\tau$ is a bound on $\tau_h^{-1}-\tau_0^{-1}$, say
\beq{tauh}
\|\tau_h^{-1}-\tau_0^{-1}\|_{\rho_h} \equiv \left\|\left(%
\begin{array}{cc}
  {\overline S}_h-{\overline S}_0 & {\overline {S_h(B_{bh})^0}}+\overline{\widetilde A_h^{(1)}}-
  ({\overline {S_0(B_{b0})^0}}+\overline{\widetilde A_0^{(1)}}) \\
  0 & \overline{\widetilde A_h^{(2)}}-\overline{\widetilde A_0^{(2)}} \\
 \end{array}%
\right)\right\|_{\rho_h}\leq C_\tau\ .
\eeq
To obtain an expression for $C_\tau$, we bound term by term the matrix appearing
in \equ{tauh}.

We start to estimate the first element of the matrix appearing in \equ{tauh}, namely $\|{\overline S}_h-{\overline
S}_0\|_{\rho_h}$. From \equ{def} we have that $S_h$ is defined by
$$
S_h=N_h(\theta+\omega)^\top  DK_h(\theta+\omega)^\top \ Df_{\mu_h}\circ K_h(\theta)\
J^{-1}\circ K_h(\theta)\ DK_h(\theta)N_h(\theta)\ .
$$
Then, we bound $Df_{\mu_h}\circ K_h$ with
\beqano
\sup_{z\in{\C}}|Df_{\mu_h}(z)|&\leq&\sup_{z\in{\C}}|Df_{\mu_0}(z)|+\sup_{z\in{\C}}|Df_{\mu_h}(z)-Df_{\mu_0}(z)|\nonumber\\
&\leq&Q_0+\sup_{z\in{\C},\mu\in\Lambda,|\mu-\mu_0|<2\kappa_\mu\varepsilon_0}|D_\mu Df_{\mu}(z)|\ |\mu_h-\mu_0|\nonumber\\
&\leq& Q_0+4 Q_{z\mu 0}\ C_{\sigma 0}\varepsilon_0\leq 2Q_0\ ,
\eeqano
if \equ{condk} holds. Notice that we have used $(p1;H+1)$ to bound $\mu_h-\mu_0$ for $h=1,...,H+1$.
Finally, we obtain
\beqano
\|S_h-S_0\|_{\rho_h}&\leq&2Q_0\ \|N_h(\theta+\omega)^\top  DK_h(\theta+\omega)^\top
\ J^{-1}\circ K_h(\theta)\ DK_h(\theta)N_h(\theta)\nonumber\\
&-&N_0(\theta+\omega)^\top  DK_0(\theta+\omega)^\top \ J^{-1}\circ
K_0(\theta)\ DK_0(\theta)N_0(\theta)\|_{\rho_h}\ .
\eeqano
Setting $\widetilde N_h=N_h-N_0$ and writing $DK_h$ as $DK_h=DK_h-DK_0+DK_0$, one obtains
\beqano
\|S_h-S_0\|_{\rho_h}&\leq&2Q_0\ \|(N_0+\widetilde N_h)(\theta+\omega)^\top (DK_h-DK_0+DK_0)(\theta+\omega)^\top \nonumber\\
&&J^{-1}\circ K_h(\theta)\ (DK_h-DK_0+DK_0)(\theta)(N_0+\widetilde N_h)(\theta)\nonumber\\
&-&N_0(\theta+\omega)^\top  DK_0(\theta+\omega)^\top  \ J^{-1}\circ
K_0(\theta)\ DK_0(\theta)N_0(\theta)\|_{\rho_h}\ .
\eeqano
Let us bound $\widetilde N_h$ using \eqref{auxa}.
Then, using that $J$ is a constant matrix, we have:
\beqano
\|S_h-S_0\|_{\rho_h}&\leq&2Q_0\
\|[((N_0+
\widetilde N_h)\circ T_\omega)^\top \,((DK_h-DK_0)\circ T_\omega)^\top \nonumber\\
&\qquad+&(N_0\circ T_\omega)^\top  (DK_0\circ T_\omega)^\top +(\widetilde N_h\circ T_\omega)^\top  (DK_0\circ T_\omega)^\top ]\ J^{-1}\circ K_h(\theta)\nonumber\\
&&[(DK_h-DK_0)(N_0+\widetilde N_h)+DK_0 N_0+DK_0\widetilde N_h]\nonumber\\
&\qquad-&(N_0\circ T_\omega)^\top  (DK_0\circ T_\omega)^\top \ J^{-1}\circ K_0(\theta)\ DK_0N_0\|_{\rho_h}\nonumber\\
&=&2Q_0\ \|((N_0+\widetilde N_h)\circ T_\omega)^\top ((DK_h-DK_0)\circ T_\omega)^\top \nonumber\\
&&J^{-1}\circ K_h(\theta)\ [(DK_h-DK_0)(N_0+\widetilde N_h)+DK_0 N_0+DK_0\widetilde N_h]\nonumber\\
&\qquad+&(\widetilde N_h\circ T_\omega)^\top  (DK_0\circ T_\omega)^\top \nonumber\\
&&J^{-1}\circ K_h(\theta)\ [(DK_h-DK_0)(N_0+\widetilde N_h)+DK_0 N_0+DK_0\widetilde N_h]\nonumber\\
&\qquad+&(N_0\circ T_\omega)^\top  (DK_0\circ T_\omega)^\top \ J^{-1}\circ K_h(\theta)\ (DK_h-DK_0)(N_0+\widetilde N_h)\nonumber\\
&\qquad+&(N_0\circ T_\omega)^\top  (DK_0\circ T_\omega)^\top \ J^{-1}\circ K_h(\theta)\ DK_0\widetilde N_h\|_{\rho_h}\nonumber\\
&\leq&2 J_e\ Q_0\ \Big\{(\|N_0\|_{\rho_0}+\|\widetilde N_h\|_{\rho_h})\ \|DK_h-DK_0\|_{\rho_h}\nonumber\\
&&\Big[\|DK_h-DK_0\|_{\rho_h}
(\|N_0\|_{\rho_0}+\|\widetilde N_h\|_{\rho_h})+\|DK_0\|_{\rho_0}\|N_0\|_{\rho_0}+\|DK_0\|_{\rho_0}\|\widetilde N_h\|_{\rho_h}\Big]\nonumber\\
&\qquad+&\|\widetilde N_h\|_{\rho_h}\|DK_0\|_{\rho_0}\Big[\|DK_h-DK_0\|_{\rho_h}(\|N_0\|_{\rho_0}+\|\widetilde N_h\|_{\rho_h})\nonumber\\
&\qquad+&\|DK_0\|_{\rho_0}\|N_0\|_{\rho_0}+\|DK_0\|_{\rho_0}\|\widetilde N_h\|_{\rho_h}\Big]\nonumber\\
&\qquad+&\|N_0\|_{\rho_0}\|DK_0\|_{\rho_0}\|DK_h-DK_0\|_{\rho_h}\ (\|N_0\|_{\rho_0}+\|\widetilde N_h\|_{\rho_h})\nonumber\\
&\qquad+&\|N_0\|_{\rho_0}\|DK_0\|_{\rho_0}^2\|\widetilde
N_h\|_{\rho_h}\Big\}\ .
\eeqano
Taking into account \equ{auxa}, \equ{constants}, we obtain:
\beq{S}
\|S_h-S_0\|_{\rho_h}\leq C_S\ D_K
\eeq
with
\beqa{CtildeS}
C_S&\equiv& 2J_e\ Q_0\
\Big\{(\|N_0\|_{\rho_0}+C_ND_K)\ \Big[D_K
(\|N_0\|_{\rho_0}+C_N D_K)\nonumber\\
&+&\|DK_0\|_{\rho_0}\|N_0\|_{\rho_0}+\|DK_0\|_{\rho_0} C_N D_K\Big]\nonumber\\
&+&C_N\|DK_0\|_{\rho_0}\ \Big[D_K(\|N_0\|_{\rho_0}+C_N D_K)
+\|DK_0\|_{\rho_0}\|N_0\|_{\rho_0}+\|DK_0\|_{\rho_0}C_N D_K\Big]\nonumber\\
&+&\|N_0\|_{\rho_0}\|DK_0\|_{\rho_0}(\|N_0\|_{\rho_0}+C_N D_K)\nonumber\\
&+&C_N \|N_0\|_{\rho_0} \|DK_0\|_{\rho_0}^2\Big\}\ .
\eeqa
Now we bound the upper right element of the matrix appearing in \equ{tauh}. This computation
will give us also a bound on the lower right element of the matrix in \equ{tauh}. We start from (see \equ{EA})
\beqano
\widetilde A_h&=&M_h^{-1}\circ T_\omega\ D_\mu f_{\mu_h}\circ K_h\ ,\nonumber\\
\widetilde A_0&=&M_0^{-1}\circ T_\omega\ D_\mu f_{\mu_0}\circ K_0
\eeqano
and the estimate:
$$
\sup_{z\in\C}|D_\mu f_{\mu_h}(z)|\leq Q_{\mu 0}+4Q_{\mu\mu 0}\ C_{\sigma_0}\varepsilon_0\leq 2Q_{\mu 0}\ ,
$$
provided \equ{condm} holds. Then, we have:
\beqano
\|\widetilde A_h-\widetilde A_0\|_{\rho_h}&\leq&2Q_{\mu 0}\ \|M_h^{-1}-M_0^{-1}\|_{\rho_h}\nonumber\\
&\leq&2C_{Minv}\ Q_{\mu 0}\
D_K\ .
\eeqano
Next we estimate $\|{\overline {S_h(B_{bh})^0}}-{\overline {S_0(B_{b0})^0}}\|_{\rho_h}$; recall
that from \equ{BB} we have that $(B_{bh})^0$ is the solution of \beq{Bbh} \lambda
(B_{bh})^0-(B_{bh})^0\circ T_\omega=-(\widetilde A_h^{(2)})^0\ , \eeq
while $(B_{b0})^0$ is the solution of \beq{B0} \lambda
(B_{b0})^0-(B_{b0})^0\circ T_\omega=-(\widetilde A_0^{(2)})^0\ . \eeq
Expanding \equ{Bbh} and \equ{B0} in Fourier series, we obtain
$$
\sum_{j\in\integer} (\widehat B_{bh})_j^0\ (\lambda-e^{2\pi ij\omega})\
e^{2\pi ij\theta}=-\sum_{j\in\integer} (\widehat A_h^{(2)})^0_j\ e^{2\pi ij\theta}\ ,
$$
so that
$$
(B_{bh})^0(\theta)=-\sum_{j\in\integer} {{(\widehat A_h^{(2)})^0_j}\over
{\lambda - e^{2\pi ij\omega}}}\ e^{2\pi ij\theta}\ ,
$$
and similarly
$$
(B_{b0})^0(\theta)=-\sum_{j\in\integer} {{(\widehat A_0^{(2)})^0_j}\over
{\lambda - e^{2\pi ij\omega}}}\ e^{2\pi ij\theta}\ .
$$
In conclusion we have:
\beq{BBh}
(B_{bh})^0(\theta)-(B_{b0})^0(\theta)=-\sum_{j\in\integer} {{(\widehat
A_h^{(2)})^0_j-(\widehat A_0^{(2)})^0_j}\over {\lambda - e^{2\pi ij\omega}}}\
e^{2\pi ij\theta}\ .
\eeq
From \equ{BBh}, let us write $(B_{bh})^0$ as
$$
(B_{bh})^0=(B_{b0})^0+\widetilde B_h\ ,
$$
where
$$
\widetilde B_h\equiv -\sum_{j\in\integer} {{(\widehat A_h^{(2)})^0_j-(\widehat
A_0^{(2)})^0_j}\over {\lambda - e^{2\pi ij\omega}}}\ e^{2\pi ij\theta}\ .
$$
Let us introduce
$$
\widetilde S_h\equiv S_h-S_0\ ,
$$
whose norm can be bounded by \equ{S}. Then, we have:
\beqano
\|{\overline {S_h(B_{bh})^0}}-{\overline
{S_0(B_{b0})^0}}\|&=&\|{\overline {(S_0+\widetilde S_h)\ ((B_{b0})^0+\widetilde B_h)}}-
{\overline {S_0(B_{b0})^0}}\|\nonumber\\
&=&\|{\overline {S_0(B_{b0})^0}}+{\overline {\widetilde S_h(B_{b0})^0}}+{\overline {S_0\widetilde B_h}}
+{\overline {\widetilde S_h \widetilde B_h}}-{\overline {S_0(B_{b0})^0}}\|\nonumber\\
&\leq&\|(B_{b0})^0\|_{\rho_0}\|\widetilde S_h\|_{\rho_h}+\|S_0\|_{\rho_0}
\|\widetilde B_h\|_{\rho_h}+\|\widetilde S_h\|_{\rho_h} \|\widetilde B_h\|_{\rho_h}\ , \eeqano
where
\beqano
\|S_0\|_{\rho_0}&\leq&J_e\ Q_0\|N_0\|_{\rho_0}^2\ \|DK_0\|_{\rho_0}^2\ ,\nonumber\\
\|\widetilde S_h\|_{\rho_h}&\leq&C_S\ D_K\ ,\nonumber\\
\|(B_{b0})^0\|_{\rho_0}&\leq&{1\over {||\lambda|-1|}}\|\widetilde A_0^{(2)}\|_{\rho_0}
\leq{1\over {||\lambda|-1|}}Q_{\mu 0}\ \|M_0^{-1}\|_{\rho_0}\ ,\nonumber\\
\|\widetilde B_h\|_{\rho_h}&\leq&{1\over {||\lambda|-1|}}\ 2C_{Minv}\ Q_{\mu 0}\ D_K\ .
\eeqano
Then, we have:
$$
\|{\overline {S_h(B_{bh})^0}}-{\overline {S_0(B_{b0})^0}}\|\leq
C_{SB}\ D_K\ ,
$$
where
\beqa{Ctilde2S}
C_{SB}&\equiv& {1\over {||\lambda|-1|}}Q_{\mu 0}\|M_0^{-1}\|_{\rho_0}C_S\nonumber\\
&+&2J_e\ Q_0 \|N_0\|_{\rho_0}^2\
\|DK_0\|_{\rho_0}^2{1\over {||\lambda|-1|}}\ C_{Minv}
\ Q_{\mu 0}\nonumber\\
&+&2C_S\ {1\over {||\lambda|-1|}}\ C_{Minv}\
Q_{\mu 0}\ D_K\ .
\eeqa
Recalling \equ{tauh}, we obtain
\beqa{taue}
\|\tau_h^{-1}-\tau_0^{-1}\|_{\rho_h}&\leq&\max\Big\{\|{\overline
{S}_h}-{\overline {S}_0}\|_{\rho_h}, \|{\overline
{S_h(B_{bh})^0}}-{\overline {S_0(B_{b0})^0}}\|_{\rho_h}+
\|{\overline{\widetilde A_h^{(1)}}}-{\overline{\widetilde
A_0^{(1)}}}\|_{\rho_h}
+\|{\overline{\widetilde A_h^{(2)}}}-{\overline{\widetilde A_0^{(2)}}}\|_{\rho_h}\Big\}\nonumber\\
&\leq&\max\Big\{C_S, C_{SB}+2C_{Minv}\ Q_{\mu 0}\Big\}\ D_K\equiv C_\tau\ ,
\eeqa
where $C_\tau$ is defined by the last inequality in \equ{taue}.
From \equ{T} and \equ{tau} we get \equ{auxd}:
$$
|\mathcal{T}_h-\mathcal{T}_0|\leq C_T\ D_K
$$
with
\beq{CT}
C_T\equiv {{\mathcal{T}_0^2}\over {1-\mathcal{T}_0 C_\tau}}\ \max\Big\{C_S, C_{SB}+2C_{Minv}\ \ Q_{\mu 0}\Big\}\ .
\eeq
\end{proof}

\subsubsection{Proof of $C_{\sigma,H+1}\leq 2C_{\sigma 0}$}
We now prove \equ{ccc} and we begin from the second inequality.
We start with the following relations, which are a consequence of \equ{Csigmah}:
\beqano
C_{\sigma,H+1}&=&\mathcal{T}_{H+1}\Big[|\lambda-1|\Big({1\over {||\lambda|-1|}}\|S_{H+1}\|_{\rho_{H+1}}+1\Big)
+\|S_{H+1}\|_{\rho_{H+1}}\Big]\ \|M_{H+1}^{-1}\|_{\rho_{H+1}}\ ,\nonumber\\
C_{\sigma 0}&=&\mathcal{T}_0 \Big[|\lambda-1|\ \Big({1\over {||\lambda|-1|}}\|S_0\|_{\rho_0}+1\Big)+
\|S_0\|_{\rho_0}\Big]\ \|M_0^{-1}\|_{\rho_0}
\eeqano
with
\beqano
\|M_{H+1}^{-1}\|_{\rho_{H+1}}&\leq&\|M_0^{-1}\|_{\rho_0}+\|M_{H+1}^{-1}-M_0^{-1}\|_{\rho_{H+1}}\nonumber\\
&\leq&\|M_0^{-1}\|_{\rho_0}+C_{Minv}\ D_K
\eeqano
with $C_{Minv}$ as in \equ{constants}. We also have
\beqano
\|S_{H+1}\|_{\rho_{H+1}}&\leq&\|S_0\|_{\rho_0}+\|S_{H+1}-S_0\|_{\rho_{H+1}}\nonumber\\
&\leq&\|S_0\|_{\rho_0}+C_S\ D_K
\eeqano
with $C_S$ as in \equ{CtildeS}. From the relation
$$
\mathcal{T}_{H+1}=\mathcal{T}_0+(\mathcal{T}_{H+1}-\mathcal{T}_0)\leq
\mathcal{T}_0+C_T\ D_K
$$
with $C_T$ as in \equ{CT}, we obtain:
\beqano
C_{\sigma,H+1}&\leq&(\mathcal{T}_0+C_T\ D_K)
\Big\{|\lambda-1|\ \Big[{1\over {||\lambda|-1|}}(\|S_0\|_{\rho_0}+C_S D_K)+1\Big]\nonumber\\
&&+\Big(\|S_0\|_{\rho_0}+C_S D_K\Big)\Big\}\ \Big(\|M_0^{-1}\|_{\rho_0}+C_{Minv}D_K \Big)\nonumber\\
&=&\mathcal{T}_0\ \Big[|\lambda-1|\Big({1\over
{||\lambda|-1|}}\|S_0\|_{\rho_0}+1\Big)\
\|M_0^{-1}\|_{\rho_0}+\|S_0\|_{\rho_0}\|M_0^{-1}\|_{\rho_0}\Big]
+C_\sigma D_K\nonumber\\
&=&C_{\sigma 0}+C_\sigma D_K\nonumber\\
&\leq&2C_{\sigma0}\ ,
\eeqano
if
\beqa{Csigsig}
C_\sigma&\equiv&C_T\ \Big\{|\lambda-1|\ \Big[{1\over {||\lambda|-1|}}(\|S_0\|_{\rho_0}+C_S D_K)+1\Big]\nonumber\\
&&+\Big(\|S_0\|_{\rho_0}+C_S D_K\Big)\Big\}\ \Big(\|M_0^{-1}\|_{\rho_0}+C_{Minv}D_K \Big)\nonumber\\
&+&\mathcal{T}_0\ \Big\{|\lambda-1|\Big[{1\over {||\lambda|-1|}}(\|S_0\|_{\rho_0}+C_SD_K)+1\Big]C_{Minv}\nonumber\\
&+&|\lambda-1|\ {1\over {||\lambda|-1|}}\
\|M_0^{-1}\|_{\rho_0}\, C_S\nonumber\\
&+&C_S\Big(\|M_0^{-1}\|_{\rho_0}+C_{Minv}D_K \Big)+
C_{Minv}\|S_0\|_{\rho_0} \Big\}\ ,\nonumber\\
\eeqa
and if we require \equ{conde}.

\vskip.1in

\subsubsection{Proof of $C_{\E,H+1}\leq 2 C_{\E 0}$}

Recall that $\delta_{H+1}={{\delta_0}\over {2^{H+1}}}$ and that from \equ{Cepsp}, \equ{cr}, one has
$$
C_{\E,H+1}\equiv C_c\ C_{W,H+1}\nu \delta_{H+1}^{-1+\tau}+C_{\mathcal{R},H+1}\ .
$$

First, it suffices to prove that
\beq{CWH}
C_{W,H+1}\leq C_{W0}+C_W\ D_K\ ,
\eeq
for a suitable constant $C_W$ which will be given in \equ{CW} below.

\vskip.1in

From \equ{Csigmah}, for $C_{W_2,H+1}$ we have:
\beqano
C_{W_2,H+1}&\leq&{1\over {||\lambda|-1|}}\ \ \Big[1+2Q_{\mu 0}(C_{\sigma0}+D_KC_\sigma)\Big]
(\|M_0^{-1}\|_{\rho_0}+D_K)\nonumber\\
&\leq&C_{W_2 0}+D_K\ C_{W_2}\ ,
\eeqano
where
\beq{CW2}
C_{W_2}\equiv{1\over {||\lambda|-1|}}\ \Big[1+2Q_{\mu 0} \|M_0^{-1}\|_{\rho_0}C_\sigma+
2Q_{\mu 0} C_{\sigma0}+2Q_{\mu 0}C_\sigma D_K\Big]\ .
\eeq
Concerning $\overline{C}_{W_2,H+1}$, we have:
\beqano
\overline{C}_{W_2,H+1}&\leq&4(\mathcal{T}_0+C_T D_K)\ \Big[{1\over {||\lambda|-1|}}
(\|S_0\|_{\rho_0}+C_SD_K)+1\Big]\ Q_{\mu 0}\ (\|M_0^{-1}\|_{\rho_0}+D_K)^2\nonumber\\
&=&\overline{C}_{W_2 0}+\overline{C}_{W_2}D_K
\eeqano
with
\beqa{CW2new}
\overline{C}_{W_2}&\equiv& 4C_T\ \Big[{1\over {||\lambda|-1|}}(\|S_0\|_{\rho_0}+C_SD_K)+1\Big]\ Q_{\mu 0}
(\|M_0^{-1}\|_{\rho_0}+D_K)^2\nonumber\\
&+&4\mathcal{T}_0 Q_{\mu 0}\
{1\over {||\lambda|-1|}}C_S\ (\|M_0^{-1}\|_{\rho_0}+D_K)^2\nonumber\\
&+&4\mathcal{T}_0\ Q_{\mu 0} \Big[{1\over {||\lambda|-1|}} (\|S_0\|_{\rho_0}+C_SD_K)+1\Big]
(D_K+2\|M_0^{-1}\|_{\rho_0})\ .\nonumber\\
\eeqa
As for $C_{W_1,H+1}$ we have:
\beqano
C_{W_1,H+1}&\leq&C_0 \
\Big[(\|S_0\|_{\rho_0}+C_SD_K)(C_{W_20}+C_{W_2}D_K+\overline{C}_{W_2 0}+\overline{C}_{W_2} D_K)\nonumber\\
&+&\|M_0^{-1}\|_{\rho_0}+D_K+2Q_{\mu 0}(\|M_0^{-1}\|_{\rho_0}+D_K)(C_{\sigma 0}+D_KC_\sigma)\Big]\nonumber\\
&=&C_{W_1 0}+D_KC_{W_1}\ ,
\eeqano
where
\beqa{CW1}
C_{W_1}&\equiv&C_0\Big[\|S_0\|_{\rho_0}C_{W_2}+C_S C_{W_2 0}+C_S C_{W_2}D_K+\|S_0\|_{\rho_0} \overline{C}_{W_2} +C_S \overline{C}_{W_2 0}+
C_S \overline{C}_{W_2} D_K+1\nonumber\\
&+&2Q_{\mu 0}\|M_0^{-1}\|_{\rho_0}C_\sigma+2Q_{\mu 0} C_{\sigma 0}+
2Q_{\mu 0}C_\sigma D_K\Big]\ .
\eeqa
In conclusion, from \equ{Csigmah} we have:
\beqano
C_{W,H+1}&\equiv& (C_{W_1 0}+D_KC_{W_1})+(C_{W_2 0}+D_KC_{W_2}+\overline{C}_{W_2 0}+D_K \overline{C}_{W_2})
\nu\delta_0^\tau 2^{-\tau(H+1)}\nonumber\\
&\leq& C_{W0}+C_W D_K
\eeqano
with
\beq{CW}
C_W\equiv C_{W_1}+C_{W_2}\nu\delta_0^{\tau}+\overline{C}_{W_2}\nu\delta_0^{\tau}\ .
\eeq

\vskip.1in

In order to get $C_{\E,H+1}$ as in \equ{Cepsp} we estimate $C_{\mathcal{R},H+1}$. To this end, we
use the following inequality:
\beq{QEH}
Q_{E,H+1}\leq Q_{E0}+C_Q D_{2K}\ ,
\eeq
for a suitable constant $C_Q$ that will be given later in \equ{CQ} and for $D_{2K}$ defined as
\beq{D2K}
D_{2K}\equiv 4 C_{d0}\ C_c^2\ \nu^{-1} \delta_0^{-\tau-2}\ \varepsilon_0\ .
\eeq
We postpone for a moment the proof of \equ{QEH} and we rather stress that,
as a consequence of \equ{QEH}, we obtain:
\beqano
C_{\mathcal{R},H+1}&\leq&
Q_{E,H+1}\
(\|M_{H+1}\|_{\rho_{H+1}}^2\,C_{W,H+1}^2+C_{\sigma,H+1}^2\nu^{2}
\delta_{H+1}^{2\tau})\nonumber\\
&\leq& (Q_{E 0}+C_QD_{2K})\
\Big[(\|M_0\|_{\rho_0}+C_MD_K)^2
(C_{W0}+C_W D_K)^2\nonumber\\
&+&(C_{\sigma 0}+C_\sigma D_K)^2\nu^2\delta_0^{2\tau}\ 2^{-2\tau(H+1)}\Big]\nonumber\\
&\leq&C_{\mathcal{R} 0}+C_{\mathcal{R}}D_K\ ,
\eeqano
where
\beqa{CRR}
C_{\mathcal{R}}&\equiv&Q_{E0}\ \Big[(2C_M\|M_0\|_{\rho_0}+C_M^2D_K)
(C_{W0}+C_W D_K)^2+\|M_0\|_{\rho_0}^2(C_W^2D_K+2C_{W0}C_W)\nonumber\\
&+&(C_\sigma^2D_K+2C_{\sigma 0}C_\sigma)\nu^2\delta_0^{2\tau}\Big]
+C_Q\ \Big[(\|M_0\|_{\rho_0}+C_M D_K)^2(C_{W0}+C_W D_K)^2\nonumber\\
&+&(C_{\sigma 0}+C_\sigma D_K)^2\nu^2\delta_0^{2\tau}\Big]\
C_c\delta_0^{-1}
\eeqa
with $D_{2K}$ is as in \equ{D2K}.

We obtain that
\beqano
C_{\E,H+1}&\leq&(C_{W0}+C_W D_K)C_c\ \nu \delta_0^{-1+\tau}2^{-(-1+\tau)(H+1)}
+C_{\mathcal{R} 0}+C_{\mathcal{R}}D_K\nonumber\\
&\leq&C_{W0} C_c \nu\delta_0^{-1+\tau}+C_{\mathcal{R} 0}+ D_K\ \Big(C_W C_c
\nu \delta_0^{-1+\tau}+C_{\mathcal{R}}\Big)\nonumber\\
&\leq&2C_{\E 0}\ ,
\eeqano
if \equ{condf} is satisfied.

Let us conclude by proving \equ{QEH} starting from the definition
$$
Q_{E,H+1}\equiv {1\over
2}\max\Big\{\|D^2E_{H+1}\|_{\rho_{H+1}- \delta_{H+1}},\|DD_\mu
E_{H+1}\|_{\rho_{H+1}- \delta_{H+1}}, \|D^2_\mu E_{H+1}\|_{\rho_{H+1}- \delta_{H+1}}\Big\}\ .
$$
We recall that
$$
E_{H+1}=\E[K_{H+1},\mu_{H+1}]  = f_{\mu_{H+1}} \circ K_{H+1} -
K_{H+1} \circ T_\omega\ .
$$
It is convenient to introduce $\Delta_H$ and $\Xi_H$ such that
$$
K_{H+1}=K_0+(K_{H+1}-K_0)\equiv K_0+\Delta_H\ ,\qquad \mu_{H+1}=\mu_0+\sum_{j=0}^H\
\sigma_j\equiv \mu_0+\Xi_H\ .
$$
Then, we have the following bound on $E_{H+1}$:
\vspace{-10pt}
\beqano
\|E_{H+1}\|_{\rho_{H+1}- \delta_{H+1}}&=&\|(f_{\mu_0}\circ K_0-K_0\circ T_\omega)+f_{\mu_{H+1}}\circ K_{H+1}
-f_{\mu_0}\circ K_0\nonumber\\
&-&(K_{H+1}-K_0)\circ T_\omega\|_{\rho_{H+1}- \delta_{H+1}}\nonumber\\
&\leq&\|E_0\|_{\rho_0}\ +(1+\sup_{z\in\mathcal{C},\mu\in\Lambda,|\mu-\mu_0|<2\kappa_\mu\varepsilon_0}|Df_{\mu_0}(z)|)\ \|K_{H+1}-K_0\|_{\rho_{H+1}}\nonumber\\
&+&\sup_{z\in\mathcal{C},\mu\in\Lambda,|\mu-\mu_0|<2\kappa_\mu\varepsilon_0}|D_\mu f_{\mu_0}(z)|\ |\mu_{H+1}-\mu_0|\nonumber\\
&\leq&\|E_0\|_{\rho_0}\ +(1+\sup_{z\in\mathcal{C},\mu\in\Lambda,|\mu-\mu_0|<2\kappa_\mu\varepsilon_0}|Df_{\mu}(z)|)\
\kappa_K\varepsilon_0\nonumber\\
&+& \sup_{z\in\mathcal{C},\mu\in\Lambda,|\mu-\mu_0|<2\kappa_\mu\varepsilon_0}|D_\mu f_{\mu}(z)|\
\kappa_\mu\varepsilon_0\ ,
\eeqano
where we used \equ{Kmu}.

We now observe that the derivative of $f\circ K$ is given by
\beq{Dfk}
D(f\circ K)=D(f(K(\theta)))=Df(K(\theta))\ DK(\theta)
\eeq
and that the second derivative is given by
$$
D^2(f\circ K)=D^2(f(K(\theta)))(DK(\theta))^2+Df(K(\theta))\
D^2K(\theta)\ .
$$
Then, one has
\beqa{Q1}
\|D^2 E_{H+1}\|_{\rho_{H+1}}&\leq &\|D^2 E_0\|_{\rho_0}+D_{2K}\nonumber\\
&\qquad +&\|D^2 f_{\mu_0+\Xi_H}(K_0+\Delta_H)\ (DK_0+D\Delta_H)\nonumber\\
&\qquad -&D^2 f_{\mu_0}(K_0)\ DK_0\|_{\rho_{H+1}}\ \|DK_0\|_{\rho_0}\nonumber\\
&\qquad +&\|D f_{\mu_0+\Xi_H}(K_0+\Delta_H)-D f_{\mu_0}(K_0)\|_{\rho_{H+1}}\ \|D^2 K_0\|_{\rho_0}\nonumber\\
&\qquad +&\|D^2 f_{\mu_0+\Xi_H}(K_0+\Delta_H)\ (DK_0+D\Delta_H)\ D\Delta_H\|_{\rho_{H+1}}\nonumber\\
&\qquad +&\|D f_{\mu_0+\Xi_H}(K_0+\Delta_H)\ D^2 \Delta_H\|_{\rho_{H+1}}\nonumber\\
&\leq&\|D^2 E_0\|_{\rho_0}+D_{2K}+\sup_{z\in\mathcal{C}}|D^3 f_{\mu_0}(z)|\ \|DK_0\|_{\rho_0}^2\ \kappa_K\varepsilon_0\nonumber\\
&\qquad +&\sup_{z\in\mathcal{C},\mu\in\Lambda,|\mu-\mu_0|<2\kappa_\mu\varepsilon_0}|D_\mu D^2 f_{\mu}(z)|\ \|DK_0\|_{\rho_0}^2\ \kappa_\mu\varepsilon_0\nonumber\\
&\qquad +&\sup_{z\in\mathcal{C}}|D^2 f_{\mu_0}(z)|\ \|DK_0\|_{\rho_0}\ D_K\nonumber\\
&\qquad +&\sup_{z\in\mathcal{C}}|D^3 f_{\mu_0}(z)|\ \|DK_0\|_{\rho_0}\ \kappa_K\varepsilon_0\ D_K\nonumber\\
&\qquad +&\sup_{z\in\mathcal{C},\mu\in\Lambda,|\mu-\mu_0|<2\kappa_\mu\varepsilon_0}|D_\mu D^2 f_{\mu}(z)|\ \|DK_0\|_{\rho_0}\ \kappa_\mu\varepsilon_0\ D_K\nonumber\\
&\qquad +&\sup_{z\in\mathcal{C}}|D^2 f_{\mu_0}(z)|\ \|D^2 K_0\|_{\rho_0}^2\ \kappa_K\varepsilon_0\nonumber\\
&\qquad +&\sup_{z\in\mathcal{C},\mu\in\Lambda,|\mu-\mu_0|<2\kappa_\mu\varepsilon_0}|D_\mu D f_{\mu}(z)|\ \|D^2K_0\|_{\rho_0}\ \kappa_\mu\varepsilon_0\nonumber\\
&\qquad +&\sup_{z\in\mathcal{C}}|D^2 f_{\mu_0}(z)|\ (\|DK_0\|_{\rho_0}+D_K)\ D_K\nonumber\\
&\qquad +&\sup_{z\in\mathcal{C}}|D^3 f_{\mu_0}(z)|\ (\|DK_0\|_{\rho_0}+D_K)D_K\ \kappa_K\varepsilon_0\nonumber\\
&\qquad +&\sup_{z\in\mathcal{C},\mu\in\Lambda,|\mu-\mu_0|<2\kappa_\mu\varepsilon_0}|D_\mu D^2 f_{\mu}(z)|\ (\|DK_0\|_{\rho_0}+D_K)D_K\ \kappa_\mu\varepsilon_0\nonumber\\
&\qquad +&\sup_{z\in\mathcal{C}}|D f_{\mu_0}(z)|\ D_{2K}+\sup_{z\in\mathcal{C}}|D^2 f_{\mu_0}(z)|\ \kappa_K\varepsilon_0\ D_{2K}\nonumber\\
&\qquad +&\sup_{z\in\mathcal{C},\mu\in\Lambda,|\mu-\mu_0|<2\kappa_\mu\varepsilon_0}|D_\mu D f_{\mu}(z)|\ \kappa_\mu\varepsilon_0\ D_{2K}\ ,
\eeqa
with $\|DK_H\|_{\rho_H}\leq\|DK_0\|_{\rho_0}+D_K$, where $D_K$ was estimated as in \equ{DK1},
$\|D^2 K_H\|_{\rho_H}\leq\|D^2K_0\|_{\rho_0}+D_{2K}$, where $D_{2K}$ is defined through the following inequalities:
\beqano
\|D^2 K_{H+1}-D^2K_0\|_{\rho_{H+1}}&\leq&
\sum_{j=1}^H\|D^2\Delta_j\|_{\rho_j}\leq
C_c\ \sum_{j=1}^H\delta_j^{-1}v_j\nonumber\\
&\leq&C_c^2\ \sum_{j=1}^H C_{dj}\nu^{-1}\delta_j^{-\tau-2}\varepsilon_j\leq
4C_c^2\ C_{d0}\nu^{-1}\delta_0^{-\tau-2}\varepsilon_0 = D_{2K}\ ,
\eeqano
if \equ{conda} holds.

In a similar way we obtain the following estimate.
Given $f\circ K$, from \equ{Dfk} we have
$$
DD_\mu(f\circ K)=DD_\mu(f(K(\theta))) DK(\theta)\ .
$$
Then, we have
$$
DD_\mu E_{H+1}=DD_\mu E_0+DD_\mu f_{\mu_0}(K_0)\
(DK_{H+1}-DK_0)+DD_\mu(f_{\mu_{H+1}}(K_0+\Delta_H)-f_{\mu_0}(K_0))\ DK_{H+1}\ ,
$$
so that
\beqa{Q2}
\|DD_\mu E_{H+1}\|_{\rho_{H+1}}&\leq &\|DD_\mu E_0\|_{\rho_0}+\sup_{z\in\mathcal{C}}|DD_\mu f_{\mu_0}(z)|\ D_K\nonumber\\
&+&\sup_{z\in\mathcal{C}}|D^2 D_\mu f_{\mu_0}(z)|\ \|\Delta_H\|_{\rho_{H+1}}
\ (\|DK_0\|_{\rho_0}+\|D\Delta_H\|_{\rho_{H+1}})\nonumber\\
&+&\sup_{z\in\mathcal{C},\mu\in\Lambda,|\mu-\mu_0|<2\kappa_\mu\varepsilon_0}
|DD_\mu^2 f_{\mu_0}(z)|\ \|\Xi_H\|_{\rho_{H+1}}(\|DK_0\|_{\rho_0}+\|D\Delta_H\|_{\rho_{H+1}})\nonumber\\
&\leq&\|DD_\mu E_0\|_{\rho_0}+\sup_{z\in\mathcal{C}}|DD_\mu f_{\mu_0}(z)|\ D_K\nonumber\\
&+&\sup_{z\in\mathcal{C}}|D^2 D_\mu f_{\mu_0}(z)|\ \kappa_K\varepsilon_0\ (\|DK_0\|_{\rho_0}+D_K)\nonumber\\
&+&\sup_{z\in\mathcal{C},\mu\in\Lambda,|\mu-\mu_0|<2\kappa_\mu\varepsilon_0}|DD_\mu^2 f_{\mu}(z)|\ \kappa_\mu\varepsilon_0\
(\|DK_0\|_{\rho_0}+D_K)\ .
\eeqa
Finally, we have:
\beq{Q3}
\|D_\mu^2 E_{H+1}\|_{\rho_{H+1}}\leq\|D_\mu^2
E_0\|_{\rho_0}+\sup_{z\in\mathcal{C},\mu\in\Lambda,|\mu-\mu_0|<2\kappa_\mu\varepsilon_0}|D_\mu^3 f_{\mu}(z)|\
\kappa_\mu\varepsilon_0\ .
\eeq
Casting together \equ{Q1}, \equ{Q2}, \equ{Q3}, we obtain:
$$
Q_{E,H+1}\leq Q_{E0}+C_Q\ D_{2K}
$$
with
\beqa{CQ}
C_Q&\equiv&{1\over 2}\
\max\Big\{1+\sup_{z\in\mathcal{C}}|D^3 f_{\mu_0}(z)|\ \|DK_0\|_{\rho_0}^2\ C_c^{-2}\delta_0^2\nonumber\\
&+&\sup_{z\in\mathcal{C},\mu\in\Lambda,|\mu-\mu_0|<2\kappa_\mu\varepsilon_0}|D_\mu D^2 f_{\mu}(z)|\ \|DK_0\|_{\rho_0}^2\
{{C_{\sigma 0}}\over {C_{d0}}}C_c^{-2}\delta_0^{\tau+2}\nonumber\\
&+&\sup_{z\in\mathcal{C}}|D^2 f_{\mu_0}(z)|\ \|DK_0\|_{\rho_0}\ C_c^{-1}\delta_0\nonumber\\
&+&\sup_{z\in\mathcal{C}}|D^3 f_{\mu_0}(z)|\ \|DK_0\|_{\rho_0}\ 4 C_{d0} C_c^{-1}\nu^{-1}\delta_0^{-\tau+1}\varepsilon_0\nonumber\\
&+&\sup_{z\in\mathcal{C},\mu\in\Lambda,|\mu-\mu_0|<2\kappa_\mu\varepsilon_0}|D_\mu D^2 f_{\mu}(z)|\ \|DK_0\|_{\rho_0}\
4 C_{\sigma 0} C_c^{-1}\delta_0\varepsilon_0\nonumber\\
&+&\sup_{z\in\mathcal{C}}|D^2 f_{\mu_0}(z)|\ \|D^2 K_0\|_{\rho_0}^2\ C_c^{-2}\delta_0^2\nonumber\\
&+&\sup_{z\in\mathcal{C},\mu\in\Lambda,|\mu-\mu_0|<2\kappa_\mu\varepsilon_0}|D_\mu D f_{\mu}(z)|\ \|D^2K_0\|_{\rho_0}\
{{C_{\sigma 0}}\over {C_{d0}}}C_c^{-2}\nu\delta_0^{\tau+2}\nonumber\\
&+&\sup_{z\in\mathcal{C}}|D^2 f_{\mu_0}(z)|\ (\|DK_0\|_{\rho_0}+D_K)\ C_c^{-1}\delta_0\nonumber\\
&+&\sup_{z\in\mathcal{C}}|D^3 f_{\mu_0}(z)|\ (\|DK_0\|_{\rho_0}+D_K)4 C_{d0}\ C_c^{-1}\nu^{-1}\delta_0^{-\tau+1}\varepsilon_0\nonumber\\
&+&\sup_{z\in\mathcal{C},\mu\in\Lambda,|\mu-\mu_0|<2\kappa_\mu\varepsilon_0}|D_\mu D^2 f_{\mu}(z)|\ (\|DK_0\|_{\rho_0}+D_K)\
4C_{\sigma 0} C_c^{-1} \delta_0\varepsilon_0\nonumber\\
&+&\sup_{z\in\mathcal{C}}|D f_{\mu_0}(z)|+\sup_{z\in\mathcal{C}}|D^2 f_{\mu_0}(z)|\ \kappa_K\varepsilon_0\nonumber\\
&+&\sup_{z\in\mathcal{C},\mu\in\Lambda,|\mu-\mu_0|<2\kappa_\mu\varepsilon_0}|D_\mu D f_{\mu}(z)|\ \kappa_\mu\varepsilon_0\ ,\nonumber\\
&&\sup_{z\in\mathcal{C}}|DD_\mu f_{\mu_0}(z)|\ C_c^{-1}\delta_0+
\sup_{z\in\mathcal{C}}|D^2 D_\mu f_{\mu_0}(z)|\ C_c^{-2}\delta_0^2\ (\|DK_0\|_{\rho_0}+D_K)\nonumber\\
&+&\sup_{z\in\mathcal{C},\mu\in\Lambda,|\mu-\mu_0|<2\kappa_\mu\varepsilon_0}|DD_\mu^2 f_{\mu}(z)|\
{{C_{\sigma 0}}\over {C_{d0}}}C_c^{-2}\nu\delta_0^{\tau+2}\ (\|DK_0\|_{\rho_0}+D_K),\nonumber\\
&&\sup_{z\in\mathcal{C},\mu\in\Lambda,|\mu-\mu_0|<2\kappa_\mu\varepsilon_0}|D_\mu^3 f_{\mu}(z)|\
{{C_{\sigma 0}}\over {C_{d0}}}C_c^{-2}\nu\delta_0^{\tau+2}\Big\}\ .
\eeqa

\subsubsection{Proof of $C_{d,H+1}\leq 2C_{d0}$}
From \equ{Chath}, \equ{CWH} we have:
\beqano
C_{d,H+1}&=&\|M_{H+1}\|_{\rho_{H+1}}\ C_{W,H+1}\leq (\|M_0\|_{\rho_0}+D_K)(C_{W0}+C_WD_K)\nonumber\\
&\leq&C_{d0}+D_K\ (C_{W0}+\|M_0\|_{\rho_0}C_W+C_WD_K)\nonumber\\
&\leq&2C_{d0}\ ,
\eeqano
if \equ{condg} is satisfied.

\section{KAM estimates for the standard map}\label{sec:results}

In this Section we implement Theorem~\ref{main} to obtain explicit estimates on
the numerical validation of the golden mean curve of the dissipative standard map \equ{dsm}
that are close to the numerical breakdown value.
As mentioned in Section~\ref{sec:sketch}, we need to start with an approximate solution
$(K_0,\mu_0)$, which satisfies the invariance equation \equ{invariance} with an error
term $E_0$, whose norm on a domain of radius $\rho_0>0$ was denoted as $\varepsilon_0$
in Theorem~\ref{main}.

The construction of the approximate solution $(K_0,\mu_0)$ can be obtained by implementing
the algorithm described in \cite{CallejaCL11} and reviewed in Section~\ref{sec:approximate} below.
An estimate on the quantity $\varepsilon_0$ is obtained by imposing the list of conditions
\equ{C1}-\equ{C10}; explicit bounds are given in
Section~\ref{sec:value}, using the definitions of the constants provided in Appendix~\ref{app:constants}.


\subsection{Construction of the approximate solution}\label{sec:approximate}

To construct an approximate solution $(K_0,\mu_0)$ of the invariance equation \equ{invariance},
we make use of the fact that the a-posteriori format described in \cite{CallejaCL11} provides
an explicit algorithm, which can be implemented numerically in a very efficient way.
Each step of the algorithm is denoted as follows:
``$a\gets b$" means that the quantity $a$ is assigned by the quantity $b$.

\begin{algorithm}\label{alg:step}
Given $K_0: \torus \to \M$, $\mu_0 \in \real$,
we denote by   $\lambda\in \real$
the conformal factor for $f_{\mu_0}$. We perform the following computations:
\begin{itemize}
\item[1)] $E_0\gets f_{\mu_0} \circ K_0 - K_0\circ T_\omega$
\item[2)] $\alpha \gets DK_0$
\item[3)] $N_0\gets [\alpha^\top \alpha]^{-1}$
\item[4)] $M_0\gets [\alpha|\ J^{-1}\circ K_0\alpha N_0]$
\item[5)] $\beta\gets M_0^{-1}\circ T_\omega$
\item[6)] $\widetilde E_0\gets \beta E_0$
\item[7)] $P_0\gets\alpha N_0$
\item[{}] $S_0\gets (P_0\circ T_\omega)^\top Df_{\mu_0} \circ K_0\ J^{-1}\circ K_0 P_0$
\item[{}] $\widetilde A_0\gets M_0^{-1}\circ T_\omega\ D_\mu f_{\mu_0} \circ K_0$
\item[8)] $(B_{a0})^0$ solves \quad
$\lambda (B_{a0})^0 - (B_{a0})^0 \circ T_\omega = - (\widetilde E_0^{(2)})^0$
\item[{}] $(B_{b0})^0$ solves \quad
$\lambda (B_{b0})^0 - (B_{b0})^0 \circ T_\omega = - (\widetilde A_0^{(2)})^0$
\item[9)] Find $\overline{W}_0^{(2)}$, $\sigma_0$ solving
\beqano
&&0 = - \overline{S}_0\, \overline{W}_0^{(2)}
- \overline{ S_0 (B_{a0})^0   }
- \overline{ S_0 (B_{b0})^0   } \sigma_0
 - \overline{{\widetilde E}_0^{(1)}}
 - \overline{\widetilde A_0^{(1)}}\sigma_0\\
&&(\lambda -1) \overline{W}_0^{(2)}
= -\overline{\widetilde{E}_0^{(2)}} - \overline{\widetilde A_0^{(2)} }\sigma_0\ .
\eeqano
\item[10)] $(W_0^{(2)})^0=(B_{a0})^0+\sigma_0 (B_{b0})^0$
\item[11)]  $W_0^{(2)}=(W_0^{(2)})^0+\overline{W}_0^{(2)}$
\item[12)] $(W_0^{(1)})^0$ solves $\noaverage{W_0^{(1)}} -
\noaverage{W_0^{(1)}}\circ T_\omega = - \noaverage{S_0 W_0^{(2)}}
 -\noaverage { \widetilde E_0^{(1)}} - \noaverage{\widetilde A_0^{(1)}} \sigma_0$
\item[13)] $K_1 \gets K_0 + M_0 W_0$
\item[{}] $\mu_0 \gets \mu_0 +\sigma_0$\ .
\end{itemize}
\end{algorithm}

\begin{remark}
We call attention on the fact that steps 2), 8), 10), 11), 12)
involve diagonal operations in the Fourier space. On the contrary,
the other steps are diagonal in the real space
(while steps 10), 11) are diagonal in both spaces).
If we represent a function in discrete points or
in Fourier space, then we can compute the other functions by applying
the Fast Fourier Transform (FFT). This implies that if we use $N$ Fourier modes
to discretize the function, then we need $O(N)$ storage and $O(N\log N)$ operations.
\end{remark}

Next task is to translate the procedure described before into a
numerical algorithm that computes invariant tori of \equ{dsm}.
To this end, we fix the frequency equal to the golden ratio:
\beq{golden}
\omega={{\sqrt{5}-1}\over 2}\ .
\eeq
We remark that the golden ratio \equ{golden} satisfies the Diophantine condition \equ{DC}
with constants $\nu={2\over {3+\sqrt{5}}}$, $\tau=1$.

Then, we start from $(K_0,\mu_0)=(0,0)$, implement Algorithm~\ref{alg:step}
using Fast Fourier Transforms
and perform a continuation method to get an approximation of the invariant
circle close to the breakdown value.

\vskip.1in

To get closer to breakdown, one needs to implement Algorithm~\ref{alg:step}
with a sufficient accuracy.
The result described in Section~\ref{sec:value} is obtained
making all computations
by means of the GNU MPFR Library using $115$ significant digits.
We use our own extended precision implementation of the
classical radix-$2$ Cooley-Tukey
in \cite{CooleyTukey}
by using GNU MPFR.
We compute $2^{18}$ Fourier coefficients to discretize the invariant circle and
we ask for a tolerance equal to $10^{-46}$
in the approximation of the analytic norm \equ{normv},
of the invariance equation \equ{invariance}
to have convergence.

We fix $\lambda=0.9$ and (by trial and error to optimize the final result)
we select the parameters measuring the size of the domain as
$\rho_0=3\cdot 10^{-5}$, $\delta_0=\rho_0/4$. This choice of
$\rho_0$ is taken to optimize the final result.
We will denote by $\varepsilon_{KAM}$ the value of the parameter
$\varepsilon$ after the algorithm has converged to an approximate
solution, $(K, \mu)$, and all the estimates of Theorem~\ref{main}
(precisely \equ{C1}-\equ{C2}-\equ{C3}-\equ{C4}-\equ{condbT}-\equ{Cnew1}-\equ{Cnew2}-\equ{C8}-\equ{C9}-\equ{C10})
have been verified numerically for that approximate solution.
In fact, Table~\ref{tab:rho} provides the
value of $\varepsilon_{KAM}$
obtained with $2^{18}$ Fourier coefficients for different values
of $\rho_0$.
We emphasize that the a-posteriori format of Theorem~\ref{main} verifies the solution and does not need
to justify how the approximate solution is constructed.

\vskip.2in

\begin{table}[h]
\begin{tabular}{|c|c|c|c|}
  \hline
  $\rho_0$ & $\varepsilon_{KAM}$ & agreement with $\varepsilon_c$ & $\mu$ \\
  \hline
  $10^{-5}$ & 0.97094171 & 99.89$\%$ & 0.06139053\\
  $2\cdot 10^{-5}$ & 0.97136363 & 99.93$\%$ & 0.06139054\\
  $3\cdot 10^{-5}$ & 0.97142178 & 99.94$\%$ &0.06139056\\
  $4\cdot 10^{-5}$ & 0.97136363 & 99.93$\%$ &0.06139060\\
  $5\cdot 10^{-5}$ & 0.97133318 & 99.93$\%$ &0.06139063\\
  $6\cdot 10^{-5}$ & 0.97127502 & 99.92$\%$ &0.06139068\\
  $7\cdot 10^{-5}$ & 0.97120503 & 99.92$\%$ &0.06139072\\
  $8\cdot 10^{-5}$ & 0.97114973 & 99.91$\%$ &0.06139075\\
  $9\cdot 10^{-5}$ & 0.97094171 & 99.89$\%$ &0.06139079\\
  $10^{-4}$ & 0.97094171 & 99.89$\%$ &0.06139082\\
  $2\cdot 10^{-4}$ & 0.97011584 & 99.80$\%$ &0.06139146\\
  \hline
\end{tabular}
\vskip.1in
\caption{The analytical estimate $\varepsilon_{KAM}$ for the golden mean curve of \equ{dsm} with $\lambda=0.9$
    for different values of the parameter $\rho$ measuring the width of the analyticity domain
    considered for $K$.}\label{tab:rho}
\end{table}

\vskip.2in

As Table~\ref{tab:Fourier} shows, the higher the number of Fourier coefficients,
the better is the result, although the execution time becomes longer. We also notice that the improvement
is smaller as the number of Fourier coefficients increases; in particular, the results are very
similar when taking $2^{17}$ and $2^{18}$ Fourier coefficients.

\vskip.2in

\begin{table}[h]
\begin{tabular}{|c|c|c|c|c|}
  \hline
  n. Fourier  & $\varepsilon_{KAM}$ & $\mu$ & agreement & execution time \\
  coefficients &  &  & with $\varepsilon_c$ & (sec) \\
  \hline
  $2^{13}$ & 0.95730400 & 0.06140120 & 98.49$\%$ & 612.28 \\
  $2^{14}$ & 0.96512016 & 0.06139562 & 99.29$\%$ & 2015.22 \\
  $2^{15}$ & 0.96807778 & 0.06139307 & 99.60$\%$ & 3205.34 \\
  $2^{16}$ & 0.97011583 & 0.06139161 & 99.81$\%$ & 8460.19 \\
  $2^{17}$ & 0.97094171 & 0.06139089 & 99.89$\%$ & 13375.78 \\
  $2^{18}$ & 0.97142178 & 0.06139056 & 99.94$\%$ & 38222.48 \\
  \hline
\end{tabular}
\vskip.1in
\caption{The analytical estimate $\varepsilon_{KAM}$ for the golden mean curve of \equ{dsm} with $\lambda=0.9$, $\rho=3\cdot 10^{-5}$,
as the number of Fourier coefficients of the solution increases.}\label{tab:Fourier}
\end{table}

\vskip.2in

The output of the construction of the approximate solution via the MPRF program
is represented by the analytic norms of the following quantities, which will be used to
check the conditions \equ{C1}-\equ{C10}, needed to implement
Theorem~\ref{main}.
All quantities are given with 30 decimal digits:
\beqa{norms}
\|M_0\|_{\rho_0} &=&  44.9270811990274410452148184267\ ,\nonumber\\
\|M_0^{-1}\|_{\rho_0} &=& 39.930678840711850152808576113\ ,\nonumber\\
\|Df_{\mu_0}\|_{\rho_0} &=& 5.07550011737521959347639032433\ ,\nonumber\\
\|D^2 f_{\mu_0}\|_{\rho_0} &=& 12.2074077197778485732557018883\ ,\nonumber\\
\|S_0\|_{\rho_0} &=& 215.24720762912463716286404004\ ,\nonumber\\
\|N_0\|_{\rho_0} &=& 156.534312450915756580422752539\ ,\nonumber\\
\|N_0^{-1}\|_{\rho_0} &=& 591.408362768291837018626059244\ ,\nonumber\\
\|DK_0\|_{\rho_0} &=& 44.9270811990274410452148184267\ ,\nonumber\\
\|D^2 K_0\|_{\rho_0} &=& 221591.876024617607481468301961\ ,\nonumber\\
\|DK_0^{-1}\|_{\rho_0} &=& 7032.62976591622436294280767134\ ,\nonumber\\
{\mathcal T}_0 &=& 7.6434265622376167352649577512\ ,\nonumber\\
\|E_0\|_{\rho_0} &=& 7.71650351451832566847490849233\, 10^{-36}\ ,\nonumber\\
\|D^2 E_0\|_{\rho_0} &=& 5.1576300492851806964395530006\, 10^{-24}\ .
\eeqa
With reference to the quantities in \equ{QQQ}, we notice that in the case of the dissipative standard map
\equ{dsm} we have $Q_{\mu 0}=1$ and $Q_{z\mu 0}=Q_{\mu\mu 0}=0$.
We stress that the quantities which require the hardest computation effort is the error
$E_0$  and its derivatives.

\subsection{Check of the conditions of Theorem~\ref{main} and results}\label{sec:value}
We verify numerically the estimates of the theorem on the existence of the golden mean torus for
the dissipative standard map described by equation \equ{dsm} with frequency as in
\equ{golden} and $\lambda=0.9$.
The corresponding breakdown threshold, as computed by means of the Sobolev's
method used in \cite{CallejaC10},
or equivalently by means of Greene's technique (see \cite{CallejaC10},
\cite{CCFL14}), gives
\beq{epsc}
\varepsilon_c=0.97198\ ,
\eeq
(compare with \cite{CallejaC10}).

On the other hand, implementing the analytical estimates of
Section~\ref{sec:proof}, we obtain that the conditions
\equ{C1}-\equ{C10}, appearing in Theorem~\ref{main} are satisfied
for a value of the perturbing parameter equal to \beq{epsKAM}
\varepsilon_{KAM} = 0.971421780429401935547661013138\ . \eeq The
corresponding value of the drift parameter amounts to \beq{muKAM}
\mu = 0.061390559555891469231218991051\ . \eeq The result is
validated by running the program with different precision on a
DELL Machine with an Intel Xeon Processor E5-2643 (Quad Core, 3.30GHz
Turbo, 10MB, 8.0 GT/s) and 16GB RAM. Precisely, we provide in
Table~\ref{tab:res} the results with different significant digits.

\vskip.2in

\begin{table}[h]
\begin{tabular}{|c|c|c|}
  \hline
  digits & $\varepsilon_{KAM}$ & execution time (sec)\\
  \hline
  50 & 0.97142178 & 27632.88\\
  60 & 0.97142178 & 29027.68 \\
  70 & 0.97142178 & 30094.44 \\
  85 & 0.97142178 & 32685.89  \\
  100 & 0.97142178 & 35390.35 \\
  115 & 0.97142178 & 38222.48\\
  \hline
\end{tabular}
\vskip.1in
\caption{The analytical estimate $\varepsilon_{KAM}$ for the golden mean curve of \equ{dsm} with $\lambda=0.9$, $\rho=3\cdot 10^{-5}$,
number of Fourier coefficients equal to $2^{18}$ and
for different precision of the computation, obtained varying the number of digits as in the first column.}\label{tab:res}
\end{table}

\vskip.2in

The results shown in Table~\ref{tab:res} suggest that the norms provided in \equ{norms}
are robust and, even if we do not implement interval arithmetic, we can conjecture that the values provided
in \equ{norms} are not affected by numerical errors. Below 50 digits of precision, the algorithm does not
produce any result, since some quantities are so small that a precision less than 50 digits is not enough.
This remark leads us to state the following result.

\begin{theorem}\label{thm:result}
Let us consider the map \equ{dsm} with $\lambda=0.9$. Let
$\rho_0=3\cdot 10^{-5}$, $\delta_0=\rho_0/4$, $\zeta=3\cdot
10^{-5}$; let us fix the frequency as $\omega={{\sqrt{5}-1}\over
2}$. Assume that the norms of $M_0$, $M_0^{-1}$, $Df_{\mu_0}$,
$D^2f_{\mu_0}$, $S_0$, $N_0$, $N_0^{-1}$, $DK_0$, $D^2K_0$,
$DK_0^{-1}$, $E_0$, $D^2E_0$, and that the twist constant
${\mathcal T}_0$ are given by the values provided in \equ{norms}.

Then, there exists an invariant attractor with frequency $\omega$ for
$\varepsilon \approx \varepsilon_{KAM}$ with
$\varepsilon_{KAM}$ as in \equ{epsKAM} and for a value of the drift parameter as in \equ{muKAM}.
\end{theorem}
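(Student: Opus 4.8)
\textbf{Proof proposal for Theorem~\ref{thm:result}.}

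The plan is to invoke Theorem~\ref{main} directly, using the numerically computed pair $(K_0,\mu_0)$ described in Section~\ref{sec:approximate} as the approximate solution, and the explicit numerical values recorded in \equ{norms} as the condition numbers. First I would verify hypothesis \textbf{H1}: the golden mean $\omega=(\sqrt5-1)/2$ lies in $\D(\nu,\tau)$ with $\nu=2/(3+\sqrt5)$ and $\tau=1$, which is the classical continued-fraction estimate for the golden ratio. Hypothesis \textbf{H2} is satisfied by construction, with $\varepsilon_0=\|E_0\|_{\rho_0}$ the value in \equ{norms}; here one must take care (as explained in the Remark following Theorem~\ref{main}) that the error is actually evaluated at a rational $\omega_0$ approximating $\omega$ to $10^{-100}$, and that the extra term $\|DK\circ T_\xi\|\,|\omega-\omega_0|$ is negligible compared to the margins in the smallness conditions. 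Hypotheses \textbf{H3} and \textbf{H4} are read off directly: the twist constant ${\mathcal T}_0$ in \equ{norms} is finite (so the relevant $2\times2$ matrix is invertible, giving \textbf{H3}), and $\zeta=3\cdot10^{-5}$ with $\rho_0=3\cdot10^{-5}$, $\delta_0=\rho_0/4$ provide the distance bounds in \textbf{H4} since the standard map \equ{dsm} is entire in $\varphi$ and $\mu$ so the only constraint is the width of the strip in $I$.

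The substantive content is hypothesis \textbf{H5}. Here I would first record the simplifications special to \equ{dsm}: the conformal factor is the constant $\lambda=0.9$, the symplectic form is standard so $J_e=1$, and since $f_\mu$ depends affinely on $\mu$ one has $Q_{\mu0}=\sup|D_\mu f_{\mu_0}|=1$ and $Q_{z\mu0}=Q_{\mu\mu0}=0$; consequently the conditions \equ{Cnew1}, \equ{Cnew2} are automatically satisfied (their right-hand sides $Q_0,Q_{\mu0}$ are positive while the left-hand sides vanish). The quantities $Q_0$, $Q_{E0}$ are then computed from the explicit formula for $Df_{\mu_0}$ and the supplied norms $\|Df_{\mu_0}\|_{\rho_0}$, $\|D^2f_{\mu_0}\|_{\rho_0}$, $\|D^2E_0\|_{\rho_0}$ (note $D_\mu E_0$ and $D^2_\mu E_0$ only involve $D_\mu f$, $D^2_\mu f$ which for \equ{dsm} vanish, so $Q_{E0}$ reduces to $\tfrac12\|D^2E_0\|_{\rho_0-\delta_0}$, bounded via Cauchy estimates from $\|D^2E_0\|_{\rho_0}$). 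With all ingredients numerical, I would then instantiate the constants $C_{\eta0}$, $C_{\E0}$, $C_{d0}$, $C_{\sigma0}$, $C_\sigma$, $C_{W0}$, $C_W$, $C_{\mathcal R}$, $D_K$ from their definitions in Appendix~\ref{app:constants} (Lemma~\ref{lem:stepestimates}, Lemma~\ref{lem:composition}, Proposition~\ref{pro:p1p2p3} and the estimates of $C_N,C_M,C_{Minv},C_T,C_Q$ in Lemmas~\ref{lem:DK} and~\ref{lem:DKT}), using the constant $C_0$ of \equ{Cu} with $\tau=1$ and the Cauchy constant $C_c=1$. Finally I would check, by direct arithmetic, that the ten inequalities \equ{C1}--\equ{C10} hold for $\varepsilon_0$ as in \equ{norms} and in fact for the larger value $\varepsilon_{KAM}$ of \equ{epsKAM}: this is exactly what the program does, and Tables~\ref{tab:rho}--\ref{tab:res} document the stability of the outcome across precisions. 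Once \textbf{H1}--\textbf{H5} are verified, Theorem~\ref{main} produces an exact solution $(K_e,\mu_e)$ of \equ{invariance} for the map \equ{dsm} at the corresponding value of $\varepsilon$, with $|\mu_e-\mu_0|\le 4C_{\sigma0}\varepsilon_0$ and $\|K_e-K_0\|_{\rho_0-\delta_0}\le 4C_{d0}\nu^{-1}\delta_0^{-\tau}\varepsilon_0$, so $\mu_e$ agrees with the value in \equ{muKAM} to the stated precision, establishing the theorem.

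The main obstacle is not any single conceptual step but the \emph{propagation of the numerical bounds through the large stack of interdependent constants}: each of $C_{\sigma0},C_{W0},C_{\eta0},C_{d0},D_K,C_{\E0},C_N,C_M,C_{Minv},C_T,C_\sigma,C_W,C_{\mathcal R},C_Q$ is defined in terms of the earlier ones, and the smallness conditions \equ{C8}--\equ{C10}, \equ{condbT} are genuinely coupled (e.g. $D_K$ depends on $C_{d0}$ which depends on $C_{W0}$ which depends on $C_{\sigma0}$, and then \equ{C8}--\equ{C10} feed back bounds on $C_\sigma,C_W,C_{\mathcal R}$). Verifying that this web of inequalities closes for the given $\varepsilon_0$, without rounding errors tipping a borderline condition, is the delicate part; this is precisely why the authors perform the computation in extended precision ($115$ digits) and why a fully rigorous version would require interval arithmetic. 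Because $\varepsilon_{KAM}$ is pushed to within $0.06\%$ of the numerical breakdown value $\varepsilon_c=0.97198$, the margins in \equ{C1}--\equ{C10} are slim, so the verification must be carried out carefully rather than by crude over-estimation; but given the reported robustness under varying precision, the conclusion stands with the stated caveat about round-off.
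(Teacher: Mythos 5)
Your proposal follows essentially the same route as the paper: take the numerically computed $(K_0,\mu_0)$ as the approximate solution, read the condition numbers off \equ{norms}, use the model-specific simplifications ($J_e=1$, $Q_{\mu 0}=1$, $Q_{z\mu 0}=Q_{\mu\mu 0}=0$), instantiate the constants of Appendix~\ref{app:constants}, and verify \equ{C1}--\equ{C10} numerically so that Theorem~\ref{main} yields the exact attractor with the stated $\varepsilon_{KAM}$ and drift. The caveats you flag (rational approximation of $\omega$, round-off, absence of interval arithmetic) are exactly those acknowledged in Sections~\ref{sec:approximate}--\ref{sec:value}.
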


\vskip.1in

The result stated in Theorem~\ref{thm:result} verifies the estimates for
$\varepsilon_{KAM}$ which is
consistent within $99.94\%$ of the numerical value $\varepsilon_c$
given in \equ{epsc}. This result
shows that, beside a world-wide recognized theoretical interest, KAM theory can also provide a constructive
effective algorithm to estimate the breakdown value with great accuracy.




\vfill\eject

\appendix

\section{Proof of Lemma \ref{neutral}}\label{app:lemmaproof}
In this appendix, we include the proof of Lemma~\ref{neutral}. In
the proof we follow the construction of
\cite{Russmann76a} to derive the constant $C_0$ in \equ{Cu}.

\begin{proof}
For the proof of the existence of the solution of \eqref{difference} we refer to \cite{CallejaCL11},
where the constant $C_0$ was not made explicit. Here, instead, we
provide an explicit estimate of $C_0$, which closely follows \cite{Russmann75}.
Let us expand $\varphi$ and $\eta$ in Fourier series as
$$
\varphi(\theta)=\sum_{k\in\integer} \hat\varphi_k e^{2\pi i k\theta}\ ,\qquad
\eta(\theta)=\sum_{k\in\integer} \hat\eta_k e^{2\pi i k\theta}\ ,
$$
where $\hat\varphi_k$, $\hat\eta_k$ denote the Fourier coefficients.
Then, equation \equ{difference} becomes
$$
\sum_{k\in\integer} \hat\varphi_k(e^{2\pi ik\omega}-\lambda)e^{2\pi ik\theta}=\sum_{k\in\integer}\hat\eta_k
e^{2\pi i k\theta}\ ,
$$
providing
$$
\hat\varphi_k={{\hat\eta_k}\over {e^{2\pi i k\omega}-\lambda}}\ .
$$
Adding the Fourier coefficients, one obtains:
$$
\varphi(\theta)=\sum_{k\in\integer}{{\hat\eta_k}\over {e^{2\pi i k\omega}-\lambda}}\ e^{2\pi i k\theta}\ .
$$
Let
$$
Z_k\equiv \min_{q\in\integer} |\omega\,k-q|\ ;
$$
we have the following inequality
\beqano
|e^{2\pi i k\omega}-\lambda|^2&=&(1-\lambda)^2\cos^2(\pi k\omega)+(1+\lambda)^2\sin^2(\pi k\omega) \nonumber\\
&\geq&(1+\lambda)^2\sin^2(\pi k\omega) \geq 4(1+\lambda)^2 Z_k^2\ ,
\eeqano
where the last inequality comes from noticing that $\sin(x)/x\geq 2/\pi$ for all $0<x<{\pi\over 2}$.

Therefore we obtain:
$$
|e^{2\pi i k\omega}-\lambda|\geq 2(1+\lambda) |Z_k|\geq 2(1+\lambda) \nu|k|^{-\tau}\ ,
$$
namely
$$
|e^{2\pi i k\omega}-\lambda|^{-1}\leq{1\over {2(1+\lambda)}}\nu^{-1}|k|^{\tau}\ .
$$
Finally, we have
\beqano
\|\varphi\|_{\rho-\delta}&\leq&\sum_{k\in\integer} |{\hat \eta_k}| e^{2\pi \rho|k|}\ {{e^{-2\pi \delta|k|}}\over
{|e^{2\pi ik\omega}-\lambda|}}\nonumber\\
&\leq& \sqrt{\sum_{k\in\integer} |{\hat \eta_k}|^2\, e^{4\pi \rho|k|}}\
\sqrt{\sum_{k\in\integer} {{e^{-4\pi \delta|k|}}\over {|e^{2\pi ik\omega}-\lambda|^2}}}\nonumber\\
&\leq& \|\eta\|_\rho\ \sqrt{F(\delta)}\ ,
\eeqano
where
$$
F(\delta)\equiv 4\ \sum_{k=1}^\infty {{e^{-4\pi \delta|k|}}\over {|e^{2\pi ik\omega}-\lambda|^2}}
$$
and where we used the estimate (see \cite{Russmann75})
$$
\sum_{k\in\integer} |{\hat \eta_k}|^2\, e^{4\pi \rho|k|} \leq 2\|\eta\|_\rho^2\ .
$$
Denoting by $\Gamma$ the Euler gamma function, using the estimates of \cite{Russmann76a}, one has that
$$
F(\delta)\leq {{\pi^2 \Gamma(2\tau+1)}\over {3\nu^2 (1+\lambda)^2(2\delta)^{2\tau} (2\pi)^{2\tau}}}\ ,
$$
which leads to \equ{estimate} with $C_0$ as in \equ{Cu}.
\end{proof}

\section{Constants of the KAM theorem}\label{app:constants}
The constants entering in the conditions\equ{C1}-\equ{C10} of Theorem~\ref{main} are defined through the
following (long) list.
For fast reference, before each constant we provide the label of the formula where the constant
was introduced.
We note that the constants are given in an explicit format and evaluating them requires only
a few lines of code.
\beqano
\equ{NU}\qquad C_{\sigma 0}&\equiv&\mathcal{T}_0\ \Big[|\lambda-1|\ \Big({1\over {||\lambda|-1|}}\|S_0\|_{\rho_0}+1\Big)
+\|S_0\|_{\rho_0}\Big]\ \|M_0^{-1}\|_{\rho_0}\ ,\nonumber\\
\equ{NU}\qquad C_{W_2 0}&\equiv&{1\over {||\lambda|-1|}}\Big(1+C_{\sigma 0} Q_{\mu 0}\Big)\|M_0^{-1}\|_{\rho_0}\ ,\nonumber\\
\equ{NU}\qquad \overline{C}_{W_2 0} &\equiv& 2\mathcal{T}_0\ \Big({1\over {||\lambda|-1|}}\|S_0\|_{\rho_0}+1\Big)\,
Q_{\mu 0}\ \|M_0^{-1}\|_{\rho_0}^2\ ,\nonumber\\
\equ{NU}\qquad C_{W_1 0}&\equiv&C_0\Big(\|S_0\|_{\rho_0}(C_{W_2 0}+\overline{C}_{W_2 0})+\|M_0^{-1}\|_{\rho_0}+Q_{\mu 0} \|M_0^{-1}\|_{\rho_0}C_{\sigma 0}\Big)
\ ,\nonumber\\
\equ{NU}\qquad C_{W0}&\equiv&C_{W_1 0}+(C_{W_2 0}+\overline{C}_{W_2 0})\nu\delta_0^{\tau}\ ,\nonumber\\
\equ{Ceta}\qquad C_{\eta 0}&\equiv& C_{W0}\|M_0\|_{\rho_0}+C_{\sigma 0}\nu\delta_0^{\tau}\ ,\nonumber\\
\equ{CR}\qquad C_{\mathcal{R} 0}&\equiv&Q_{E 0}(\|M_0\|_{\rho_0}^2 C_{W0}^2+C_{\sigma 0}^2\nu^2\delta_0^{2\tau})\ ,\nonumber\\
\equ{Ceps'}\qquad C_{\E 0}&\equiv& C_c\ C_{W0}\nu \delta_0^{-1+\tau}+C_{\mathcal{R} 0}\ ,\nonumber\\
\equ{Chath}\qquad C_{d0}&\equiv& C_{W0}\ \|M_0\|_{\rho_0}\ ,\nonumber\\
\equ{kappa}\qquad \kappa_0&\equiv&2^{2\tau+1}\, C_{\E 0} \nu^{-2}\delta_0^{-2\tau}\ ,\nonumber\\
\equ{kappa}\qquad \kappa_K&\equiv& 4 C_{d0}\ \nu^{-1} \delta_0^{-\tau}\ ,\nonumber\\
\equ{kappa}\qquad \kappa_\mu&\equiv&4 C_{\sigma 0}\ ,\nonumber\\
\equ{DK1}\qquad D_K&\equiv& 4C_{d0}\ C_c\ \nu^{-1} \delta_0^{-\tau-1}\ \varepsilon_0\ ,\nonumber\\
\equ{D2K}\qquad D_{2K}&\equiv& 4\ C_{d0} C_c^2 \nu^{-1} \delta_0^{-\tau-2}\ \varepsilon_0\ ,\nonumber\\
\equ{constants}\qquad C_N&\equiv&\|N_0\|_{\rho_0}^2\
{{2\|DK_0\|_{\rho_0}+D_K}\over {1-\|N_0\|_{\rho_0}D_K(2\|DK_0\|_{\rho_0}+D_K)}}\ ,\nonumber\\
\equ{constants}\qquad C_M&\equiv&1+J_e\Big[C_N(\|DK_0\|_{\rho_0}+D_K)+\|N_0\|_{\rho_0}\Big]\ ,\nonumber\\
\equ{constants}\qquad C_{Minv}&\equiv&C_{N}(\|DK_0\|_{\rho_0}+D_K)+\|N_0\|_{\rho_0}+J_e\ ,\nonumber\\
\equ{CtildeS}\qquad C_S&\equiv& 2 J_e Q_0 \
\Big\{(\|N_0\|_{\rho_0}+C_ND_K)\ \Big[D_K
(\|N_0\|_{\rho_0}+C_ND_K)\nonumber\\
&+&\|DK_0\|_{\rho_0}\|N_0\|_{\rho_0}+\|DK_0\|_{\rho_0}C_ND_K\Big]\nonumber\\
&+&C_N\|DK_0\|_{\rho_0} \Big[D_K
(\|N_0\|_{\rho_0}+C_ND_K)
+\|DK_0\|_{\rho_0}\|N_0\|_{\rho_0}+\|DK_0\|_{\rho_0}C_N D_K\Big]\nonumber\\
&+&\|N_0\|_{\rho_0}\|DK_0\|_{\rho_0}(\|N_0\|_{\rho_0}+C_ND_K)
+C_N \|N_0\|_{\rho_0} \|DK_0\|_{\rho_0}^2\Big\}\ ,\nonumber\\
\eeqano

\beqano
\equ{Ctilde2S}\qquad C_{SB}&\equiv& {1\over {||\lambda|-1|}}Q_{\mu 0}\|M_0^{-1}\|_{\rho_0}C_S+
2 J_e Q_0\ \|N_0\|_{\rho_0}^2\ \|DK_0\|_{\rho_0}^2{1\over {||\lambda|-1|}}\ C_{Minv}
\ Q_{\mu 0}\nonumber\\
&+&2C_S\ {1\over {||\lambda|-1|}}\ C_{Minv}\ Q_{\mu 0}\ D_K\ ,\nonumber\\
\equ{taue}\qquad C_\tau&\equiv&\max\Big\{C_S, C_{SB}+2C_{Minv} Q_{\mu 0}\Big\}\ D_K\ ,\nonumber\\
\equ{CT}\qquad C_T&\equiv& {{\mathcal{T}_0^2}\over {1-\mathcal{T}_0 C_\tau}}\
\max\Big\{C_S, C_{SB}+2C_{Minv} Q_{\mu 0}\Big\}\ ,\nonumber\\
\equ{Csigsig}\qquad C_\sigma&\equiv&C_T\ \Big\{|\lambda-1|\ \Big[{1\over {||\lambda|-1|}}(\|S_0\|_{\rho_0}+C_S
D_K)+1\Big]\nonumber\\
&+&\Big(\|S_0\|_{\rho_0}+C_S D_K\Big)\Big\}\ \Big(\|M_0^{-1}\|_{\rho_0}+C_{Minv}D_K\Big)\nonumber\\
&+&\mathcal{T}_0\ \Big\{|\lambda-1|\ \Big[{1\over {||\lambda|-1|}}(\|S_0\|_{\rho_0}+C_S D_K)+1\Big]C_{Minv}\nonumber\\
&+&|\lambda-1|\ {1\over {||\lambda|-1|}}\ \|M_0^{-1}\|_{\rho_0}C_S+C_S
\Big(\|M_0^{-1}\|_{\rho_0}+C_{Minv}D_K\Big)\nonumber\\
&+&C_{Minv}\|S_0\|_{\rho_0}\Big\}\ ,\nonumber\\
\equ{CW2new}\qquad \overline{C}_{W_2}&\equiv& 4C_T\ \Big[{1\over {||\lambda|-1|}}(\|S_0\|_{\rho_0}+C_SD_K)+1\Big]\ Q_{\mu 0}
(\|M_0^{-1}\|_{\rho_0}+D_K)^2\nonumber\\
&+&4\mathcal{T}_0 Q_{\mu 0}\
{1\over {||\lambda|-1|}}C_S\ (\|M_0^{-1}\|_{\rho_0}+D_K)^2\nonumber\\
&+&4\mathcal{T}_0\ Q_{\mu 0} \Big[{1\over {||\lambda|-1|}} (\|S_0\|_{\rho_0}+C_SD_K)+1\Big]
(D_K+2\|M_0^{-1}\|_{\rho_0})\nonumber\\
\equ{CRR}\qquad C_{\mathcal{R}}&\equiv&Q_{E0}\ \Big[(2C_M\|M_0\|_{\rho_0}+C_M^2D_K)
(C_{W0}+C_WD_K)^2+\|M_0\|_{\rho_0}^2(C_W^2D_K+2C_{W0}\ C_W)\nonumber\\
&+&(C_\sigma^2D_K+2C_{\sigma 0}C_\sigma)\nu^2\delta_0^{2\tau}\Big]
+C_Q\ \Big[(\|M_0\|_{\rho_0}+C_M D_K)^2(C_{W0}+C_W D_K)^2\nonumber\\
&+&(C_{\sigma 0}+C_\sigma D_K)^2\nu^2\delta_0^{2\tau}\Big]\ C_c\delta_0^{-1}\ ,\nonumber\\
\eeqano

\beqano
\equ{CW2}\qquad C_{W_2}&\equiv&{1\over {||\lambda|-1|}}\ \Big[1+2Q_{\mu 0} \|M_0^{-1}\|_{\rho_0}C_\sigma
+2Q_{\mu 0} C_{\sigma 0}+2Q_{\mu 0}C_\sigma D_K\Big]\ ,\nonumber\\
\equ{CW1}\qquad C_{W_1}&\equiv& C_0\Big[\|S_0\|_{\rho_0}C_{W_2}+C_S C_{W_2 0}+
C_S C_{W_2}D_K+
\|S_0\|_{\rho_0} \overline{C}_{W_2}\nonumber\\
&+&C_S \overline{C}_{W_2 0}+C_S \overline{C}_{W_2}D_K+1\nonumber\\
&+&2Q_{\mu 0}\|M_0^{-1}\|_{\rho_0}C_\sigma+2Q_{\mu 0} C_{\sigma 0}+
2Q_{\mu 0} C_\sigma D_K\Big]\ ,\nonumber\\
\equ{CW}\qquad C_W&\equiv& C_{W_1}+C_{W_2}\nu\delta_0^{\tau}
+\overline{C}_{W_2}\nu\delta_0^{\tau}\nonumber\\
\equ{CQ}C_Q&\equiv&{1\over 2}\
\max\Big\{1+\sup_{z\in\mathcal{C}}|D^3 f_{\mu_0}(z)|\ \|DK_0\|_{\rho_0}^2\ C_c^{-2}\delta_0^2\nonumber\\
&+&\sup_{z\in\mathcal{C},\mu\in\Lambda,|\mu-\mu_0|<2\kappa_\mu\varepsilon_0}|D_\mu D^2 f_{\mu}(z)|\ \|DK_0\|_{\rho_0}^2\
{{C_{\sigma 0}}\over {C_{d0}}}C_c^{-2}\delta_0^{\tau+2}\nonumber\\
&+&\sup_{z\in\mathcal{C}}|D^2 f_{\mu_0}(z)|\ \|DK_0\|_{\rho_0}\ C_c^{-1}\delta_0\nonumber\\
&+&\sup_{z\in\mathcal{C}}|D^3 f_{\mu_0}(z)|\ \|DK_0\|_{\rho_0}\ 4 C_{d0} C_c^{-1}\nu^{-1}\delta_0^{-\tau+1}\varepsilon_0\nonumber\\
&+&\sup_{z\in\mathcal{C},\mu\in\Lambda,|\mu-\mu_0|<2\kappa_\mu\varepsilon_0}|D_\mu D^2 f_{\mu}(z)|\ \|DK_0\|_{\rho_0}\
4 C_{\sigma 0} C_c^{-1}\delta_0\varepsilon_0\nonumber\\
&+&\sup_{z\in\mathcal{C}}|D^2 f_{\mu_0}(z)|\ \|D^2 K_0\|_{\rho_0}^2\ C_c^{-2}\delta_0^2\nonumber\\
&+&\sup_{z\in\mathcal{C},\mu\in\Lambda,|\mu-\mu_0|<2\kappa_\mu\varepsilon_0}|D_\mu D f_{\mu}(z)|\ \|D^2K_0\|_{\rho_0}\
{{C_{\sigma 0}}\over {C_{d0}}}C_c^{-2}\nu\delta_0^{\tau+2}\nonumber\\
&+&\sup_{z\in\mathcal{C}}|D^2 f_{\mu_0}(z)|\ (\|DK_0\|_{\rho_0}+D_K)\ C_c^{-1}\delta_0\nonumber\\
&+&\sup_{z\in\mathcal{C}}|D^3 f_{\mu_0}(z)|\ (\|DK_0\|_{\rho_0}+D_K)4 C_{d0}\ C_c^{-1}\nu^{-1}\delta_0^{-\tau+1}\varepsilon_0\nonumber\\
&+&\sup_{z\in\mathcal{C},\mu\in\Lambda,|\mu-\mu_0|<2\kappa_\mu\varepsilon_0}|D_\mu D^2 f_{\mu}(z)|\ (\|DK_0\|_{\rho_0}+D_K)\
4C_{\sigma 0} C_c^{-1} \delta_0\varepsilon_0\nonumber\\
&+&\sup_{z\in\mathcal{C}}|D f_{\mu_0}(z)|+\sup_{z\in\mathcal{C}}|D^2 f_{\mu_0}(z)|\ \kappa_K\varepsilon_0\nonumber\\
&+&\sup_{z\in\mathcal{C},\mu\in\Lambda,|\mu-\mu_0|<2\kappa_\mu\varepsilon_0}|D_\mu D f_{\mu}(z)|\ \kappa_\mu\varepsilon_0\ ,\nonumber\\
&&\sup_{z\in\mathcal{C}}|DD_\mu f_{\mu_0}(z)|\ C_c^{-1}\delta_0+
\sup_{z\in\mathcal{C}}|D^2 D_\mu f_{\mu_0}(z)|\ C_c^{-2}\delta_0^2\ (\|DK_0\|_{\rho_0}+D_K)\nonumber\\
&+&\sup_{z\in\mathcal{C},\mu\in\Lambda,|\mu-\mu_0|<2\kappa_\mu\varepsilon_0}|DD_\mu^2 f_{\mu}(z)|\
{{C_{\sigma 0}}\over {C_{d0}}}C_c^{-2}\nu\delta_0^{\tau+2}\ (\|DK_0\|_{\rho_0}+D_K),\nonumber\\
&&\sup_{z\in\mathcal{C},\mu\in\Lambda,|\mu-\mu_0|<2\kappa_\mu\varepsilon_0}|D_\mu^3 f_{\mu}(z)|\
{{C_{\sigma 0}}\over {C_{d0}}}C_c^{-2}\nu\delta_0^{\tau+2}\Big\}\ .\nonumber\\
\eeqano


\def\cprime{$'$} \def\cprime{$'$} \def\cprime{$'$} \def\cprime{$'$}

\end{document}